\definecolor{mediumtealblue}{rgb}{0.0, 0.33, 0.71}
\definecolor{tangelo}{rgb}{0.98, 0.3, 0.0}
\newtheorem{theorem}{Theorem}[section]
\newtheorem{defn}[theorem]{Definition}
\newtheorem{lem}[theorem]{Lemma}
\newtheorem{prop}[theorem]{Proposition}
\newtheorem{cor}[theorem]{Corollary}
\newtheorem{algorithm}{Theorem}[]
\newtheorem{algo}[algorithm]{Algorithm}
\newcommand\bA{\boldsymbol{A}}
\newcommand\bu{{\boldsymbol{u}}}
\newcommand\bul{\boldsymbol{u}^{\ell}}
\newcommand\tbu{\widetilde{\boldsymbol{u}}}
\newcommand\bee{\mathbf{e}}
\newcommand\beel{\mathbf{e}^\ell}
\newcommand\e{\boldsymbol{\epsilon}}
\newcommand\el{\boldsymbol{\epsilon}^\ell}
\newcommand\eell{\boldsymbol{\epsilon}^{\ell-1}}
\newcommand\te{\boldsymbol{\tilde{\epsilon}}}
\newcommand\tel{\boldsymbol{\tilde{\epsilon}}^\ell}
\newcommand\buo{\boldsymbol{u}_0}
\newcommand\bv{{\boldsymbol{v}}}
\newcommand\bB{{\boldsymbol{B}}}
\newcommand\bue{\boldsymbol{u}^\varepsilon}
\newcommand\buel{\boldsymbol{u}^{{\varepsilon},{\ell}}}
\newcommand\tbuel{\boldsymbol{\tilde{u}}^{{\varepsilon},{\ell}}}
\newcommand\tbuell{\boldsymbol{\tilde{u}}^{{\varepsilon},{\ell-1}}}
\newcommand\buem{\boldsymbol{u}^{{\varepsilon},{m}}}
\newcommand\buull{\boldsymbol{u}^{\ell-1}}
\newcommand\tbul{\boldsymbol{\tilde{u}}^{\ell}}
\newcommand\buell{\boldsymbol{u}^{\varepsilon,\ell-1}}
\newcommand\bx{\boldsymbol{x}}
\newcommand\bz{{\boldsymbol{z}}}
\newcommand\tbzel{{\boldsymbol{\tilde{z}}}^{\varepsilon,\ell}}
\newcommand\bze{\boldsymbol{z}^\varepsilon}
\newcommand\bzel{\boldsymbol{z}^{\varepsilon,\ell}}
\newcommand\bzell{\boldsymbol{z}^{\varepsilon,\ell-1}}
\newcommand\bd{\boldsymbol{d}}
\newcommand\bP{\boldsymbol{P}}
\newcommand\bw{\boldsymbol{w}}
\newcommand\bW{\boldsymbol{W}}
\newcommand\dv{\mathrm{div}\,}
\newcommand\per{\mathrm{per}}
\newcommand\rhoe{{\rho^\varepsilon}}
\newcommand\bb{\boldsymbol{b}}
\newcommand\bQ{{\boldsymbol{Q}}}
\newcommand\bfi{\boldsymbol{\varphi}}
\newcommand\bve{{\boldsymbol{v}^\varepsilon}}
\newcommand\bvel{{\boldsymbol{v}^{\varepsilon,\ell}}}
\newcommand\bvell{{\boldsymbol{v}^{\varepsilon,\ell-1}}}
\newcommand\tbvel{{\boldsymbol{\tilde{v}}^{\varepsilon,\ell}}}
\newcommand\psiel{{\psi}^{\varepsilon,\ell}}
\newcommand\psiell{{\psi}^{\varepsilon,\ell-1}}
\newcommand\roel{{\rho}^{\varepsilon,\ell}}
\newcommand\troel{{\tilde{\rho}}^{\varepsilon,\ell}}
\newcommand\vrol{{\varrho}^{\ell}}
\newcommand\tvrol{{\tilde{\varrho}}^{\ell}}
\newcommand\mC{\mathcal{C}}
\newcommand\cD{\mathscr{D}}
\newcommand\cF{\mathscr{F}}
\newcommand\HH{\mathbb{H}}
\newcommand\LL{\mathbb{L}}
\newcommand{\cL}{\mathscr{L}}
\newcommand\NN{\mathbb{N}}
\newcommand\PP{\mathbb{P}}
\newcommand\RR{\mathbb{R}}
\newcommand\EE{\mathbb{E}}
\newcommand\VV{\mathbb{V}}
\newcommand\WW{\mathbb{W}}
\newcommand\FF{\mathbb{F}}
\newcommand\mE{\mathcal{E}}
\newcommand\tmE{\mathcal{\widetilde{E}}}
\newcommand\mF{\mathcal{F}}
\newcommand\mK{{\boldsymbol{\mathcal{K}}}}
\newcommand\mH{{\boldsymbol{\mathcal{H}}}}
\newcommand{\I}{\texttt{I}}
\newcommand{\II}{\texttt{II}}
\newcommand{\III}{\texttt{III}}
\newcommand{\IV}{\texttt{IV}}
\newcommand{\tI}{\widetilde{\texttt{I}}}
\newcommand{\tII}{\widetilde{\texttt{II}}}
\newcommand{\tIII}{\widetilde{\texttt{III}}}
\newcommand{\tIV}{\widetilde{\texttt{IV}}}
\newcommand{\tV}{\widetilde{\texttt{V}}}
\newcommand{\pre}{\mathsf{p}}
\newcommand{\preel}{\mathsf{p}^{\varepsilon,\ell}}
\newcommand{\tpreel}{\tilde{\mathsf{p}}^{\varepsilon,\ell}}
\newcommand{\tprel}{\widetilde{\mathsf{p}}^\ell}
\newcommand{\qre}{\mathsf{q}}
\newcommand{\prel}{\mathsf{p}^\ell}
\newcommand{\qrel}{\mathsf{q}^\ell}
\newcommand{\pree}{\mathsf{p}^\varepsilon}
\newcommand{\pie}{{\pi^\varepsilon}}
\newcommand{\piel}{{\pi^{\varepsilon,\ell}}}
\newcommand{\tpiel}{{\tilde{\pi}^{\varepsilon,\ell}}}
\newcommand{\vpil}{{\varpi}^\ell}
\newcommand{\vpiel}{{\varpi}^{\varepsilon,\ell}}
\newcommand{\xiel}{{\xi}^{\varepsilon, \ell}}
\newcommand{\xiell}{{\xi}^{\varepsilon,\ell-1}}
\newcommand{\xiem}{{\xi}^{\varepsilon,m}}
\newcommand{\fil}{{\phi}^{\ell}}
\newcommand{\fiill}{{\phi}^{\ell-1}}
\newcommand{\fiel}{{\phi}^{\varepsilon,\ell}}
\newcommand{\fiell}{{\phi}^{\varepsilon,\ell-1}}
\newcommand{\fiem}{{\phi}^{\varepsilon,m}}
\newcommand{\vro}{{\varrho}}
\newcommand{\bsig}{\boldsymbol{\sigma}}
\newcommand{\bsigl}{\boldsymbol{\sigma}^\ell}
\newcommand{\bsigll}{\boldsymbol{\sigma}^{\ell-1}}
\newcommand{\tbsigl}{\boldsymbol{\tilde{\sigma}}^\ell}
\newcommand{\trace}{\mathrm{Tr}}
\newcommand\la{\left\langle }
\newcommand\ra{\right\rangle }
\newcommand\tB{\tilde{B}}
\newcommand\tC{\widetilde{C}}
\newcommand\tbB{\tilde{\boldsymbol{B}}}
\newcommand\tb{\tilde{b}}
\newcommand{\tOm}{\widetilde{\Omega}}
\newcommand\dun{{\delta_{1}}}
\newcommand\ddeu{{\delta_{2}}}
\newcommand\tNLT{\widetilde{NLT}}
\newcommand\sA{\mathscr{A}}
\newcommand{\sel}{\textsc{e}^{\ell}}
\newcommand{\tsel}{\tilde{\textsc{e}}^{\ell}}
\newcommand{\sql}{\textsc{q}^{\ell}}
\newcommand{\sell}{\textsc{e}^{\ell-1}}
\newcommand{\noise}{\mathrm{Noise}}
\newcommand{\tbsig}{ \boldsymbol{\tilde{\sigma}}}
\newcounter{lil}
\newcounter{gr111}
\newenvironment{steps}
{\begin{list} {\bf Step (\Roman{gr111})$\, $} {\usecounter{gr111}
			\setlength{\labelwidth}{-0.2cm} \setlength{\leftmargin}{0.0cm}
			\setlength{\topsep}{0.1cm} \setlength{\itemsep}{0.2cm}
			\setlength{\parsep}{0.1cm} \setlength{\itemindent}{0.4cm}
			\setlength{\parskip}{0.0cm}}} {\end{list}}
\author{Tsiry Randrianasolo}
\author{Erika Hausenblas}
\address[Erika Hausenblas]{Lehrstuhl Angewandte Mathematik,
	Montanuniversit\"at Leoben,
	Peter-Tunner-Stra{\ss}e 25-27\\
	AT-8700 Leoben}
\address[Tsiry Randrianasolo]{Fakult\"at f\"ur Mathematik,
	Universit\"at Bielefeld,
	Universit\"atsstra\ss e 25\\
	DE-33615 Bielefeld}
\email[Corresponding author]{\href{mailto:trandria@math.uni-bielefeld.de}{trandria@math.uni-bielefeld.de}}
\title[Time-discretization of stochastic 2-D Navier--Stokes equations]{Time-discretization of stochastic 2-D Navier--Stokes equations with a penalty-projection method}
\date{\today}
\thanks{This research was supported by the Austrian Science Fund (FWF): P26958 and the German Research Council as part of the Collaborative Research Center SFB 1283.}
\begin{document}
	\begin{abstract}
		A time-discretization of the stochastic incompressible Navier--Stokes problem by penalty method is analyzed. Some error estimates are derived, combined, and eventually arrive at a speed of convergence in probability of order 1/4 of the main algorithm for the pair of variables velocity and pressure. Also, using the law of total probability, we obtain the strong convergence of the scheme for both variables.
		%
		%
	\end{abstract}
	\maketitle
	\section{Introduction}\label{sec:introduction}
	Let $T>0$ and $\mathfrak{P}:=(\Omega, \mF,\FF,\PP)$
	be a filtered probability space with the filtration $\FF:=(\cF_t)_{0\leq t\leq T}$ satisfying the usual conditions. We refer to the following system of equations as the stochastic \textit{incompressible Navier--Stokes problem} (SNS),
	\begin{equation}
	\label{eq:NSE1}\left\{
	\begin{split}
	\bu_t - \nu\Delta\bu+[\bu\cdot\nabla]\bu +  \nabla \pre
	&=  \dot{\bW}, \mbox{ in } \RR^2,
	\\
	\dv\bu
	&= 0, \mbox{ in } \RR^2.
	\end{split}\right.
	\end{equation}
	Here $\bu = \left\{ \bu(t,\bx):t\in[0,T]\right\}$ and $\pre=\left\{\pre(t,\bx):t\in[0,T]\right\}$ are unknown stochastic processes on $\RR^2$, representing respectively the velocity and the pressure of a fluid with kinematic viscosity $\nu$ filling the whole space $\RR^2$, in each point of $\RR^2$.
	
	In $\RR^2$, we endow \eqref{eq:NSE1} with an initial condition,
	\begin{align*}
	\bu(0,\bx) &= \bu_0(\bx)
	\intertext{and periodic boundary conditions, }
	\bu(t,\bx+ L\bb_j) &= \bu(t, \bx), \qquad j=1,2,\quad t\in [0,T] ,
	\end{align*}
	where $\bu$ has a vanishing spatial average. Here $(\bb_1,\bb_2)$ is the canonical basis of $\RR^2$ and $L>0$ is the period in the $j$th direction; $D = (0,L)\times(0,L)$ is the square of the period. The term $\bW:= \{\bW(t): t\in [0,T]\}$ is a $\mK$-valued Wiener process where $\mK$ is a separable Hilbert space.

	An incompressible fluid flow is usually modeled with a deterministic Navier--Stokes equation. The stochastic Navier--Stokes~\eqref{eq:NSE1} is a well known model that captures fluid instabilities under ambient noise \cite{birnir2008turbulence} or small scales perturbation for homogeneous turbulent flow, see e.g. \cite{bernard2002turbulent}, \cite{birnir2013the}, and \cite{pope2000turbulent}.
	
	Strong approximation of Stochastic Partial Differential Equations (SPDEs), such as \cref{eq:NSE1}, is mostly the natural approach because of its link with the numerical analysis of deterministic equations. However, this type of approximation is often inaccessible for nonlinear SPDEs. Indeed, when the nonlinearity is neither globally Lipschitz nor monotone, weak convergence or convergence in probability are frequently considered, see e.g. \cite{banas2014a}, \cite{brzezniak2013finite}, \cite{debouard2004a}, \cite{debussche2006convergence},  \cite{flandoli1995time}, \cite{hou2006wiener}, and \cite{milstein20161layer}. Another notion, the \textit{speed of convergence in probability}, was first put forward by Printems in \cite{printemps2001on} for some parabolic SPDEs. Regardless of the type of convergence, we may also have to consider different approaches according to the characteristic of the equation. In particular for the SNS, we can use e.g. a numerical approximation using an Ornstein--Uhlenbeck as an auxiliary step such as in \cite{flandoli1995time}, or using splitting methods such as in \cite{bessaih2014splitting,erich2012time}, or using the Wiener chaos expansion such as in \cite{hou2006wiener}, or using the layer method (probabilistic representation) such as in \cite{milstein20161layer}. Carelli and Prohl proved in \cite{carelli2012rates} that a speed of convergence in probability can be derived from some direct numerical approximations of the SNS. Here the convergence concerns only one variable, the velocity field.

	The SNS shares the same complexity as its deterministic counter part, when it comes to computations. Velocity and pressure are both coupled by the incompressibility constraint, which often requires a saddle point problem to solve. To break this saddle point character of the system, velocity and pressure are decoupled by perturbing the divergence free condition by a penalty method \cite[Chapter 3]{lions1969quelques} and choosing a penalty operator in a similar fashion as in \cite{courant1943variational}. This consists, for every $\varepsilon>0$, to solve the penalized version of \eqref{eq:NSE1}, i.e.
	\begin{equation}
	\label{eq:penalized NSE1}\left\{
	\begin{split}
	\bue_t - \nu\Delta\bue + [\bue\cdot\nabla]\bue+ \frac12(\dv\bue)\bue +  \nabla \pree &= \dot{\bW}, \mbox{ in } \RR^2,
	\\
	\dv\bue+\varepsilon\pree &= 0, \mbox{ in } \RR^2.
	\end{split}\right.
	\end{equation}	
	This belongs to a more general class of approximation methods for the Navier--Stokes equation, called projection and quasi-compressible methods. This includes the artificial compressibility method, the pressure stabilization, and the pressure correction method. For a complete survey or review on these methods, the reader is referred for instance to \cite{guermond2006an} or the monograph \cite{prohl2013projection}. Even though these methods are already very popular and efficient in the deterministic framework, the paper of Carelli, Hausenblas, and Prohl, see \cite{erich2012time}, is the only work, which treats on projection and quasi-compressible methods for the stochastic Stokes equation by using the pressure stabilization and the pressure correction methods to derive an algorithm based on a time marching strategy. The artificial compressibility method has already been used to prove existence and pathwise uniqueness of global strong solutions of SNS, see \cite{manna2008stochastic}, or adapted solutions to the backward SNS by a local monotonicity argument, see \cite{yin2010stochastic}. Concerning the penalty method, it has been introduced in \cite{temam1968une} by Temam for the deterministic Navier--Stokes equations where he established its convergence. Since then, the method has been improved by Shen with the addition of error estimates in a sequence  of papers including \cite{shen1992onnumerische} and \cite{shen1995on}. It has been used (with a different penalty operator) in a stochastic framework in \cite{capinski2001on} as an auxiliary step to prove the existence of a spatially homogeneous solution of a SNS driven by a spatially homogeneous Wiener random field.
	
	In this paper, we study a semi-implicit time-discretization scheme for the full stochastic incompressible 2D Navier--Stokes equation based on the penalized system \cref{eq:penalized NSE1}. Formally, the scheme consists 
	of solving the following equations:
	\\[7pt]
	Given $0<\eta<1/2$, $\alpha>1$, $\buo$, $\phi^0= 0$. For $\ell = 1,\ldots,M$:
	\begin{align*}
	\intertext{$\bullet$ Step 1 (Penalization): Find $\tbul$ such that}
	&\tbul -\nu k\Delta\tbul+k\tB(\tbul, \tbul)-k^{1-\eta}\nabla \dv\tbul= \Delta_\ell\bW+\buull-k\nabla\fiill;
	\intertext{$\bullet$ Step 2: Find $\fil$ such that}
	&\Delta\fil= \Delta\fiill +(\alpha k)^{-1}\dv\tbul;
	\end{align*}
	{$\bullet$ Step 3 (Projection): $\bul = \bP_\HH\tbul$}, i.e.
	\begin{equation*}
	\begin{split}
	\bul =\tbul- \alpha k \nabla(\fil - \fiill),\qquad
	\prel=\tprel + \fil + \alpha(\fil - \fiill).
	\end{split}
	\end{equation*}
	More details are given in \cref{algo:penalty method}. 
	We focus on the time-discretization, since different technical endeavors may obscure the main difficulty of the time-discretization. A paper which is similar to ours is \cite{carelli2012rates}, where the authors show the convergence in probability of a space-time discretization of stochastic incompressible Navier--Stokes in 2D. The numerical schemes they use are implicit/semi-implicit in time and use a divergence-free finite element pairing such as the Scott--Vogelius finite element for the velocity and the pressure. The proof needs also some a priori estimates of the approximate solution in $\VV$, the divergence-free space with finite enstrophy. These estimates are obtained by means of the additional orthogonal property of the nonlinear term in 2D and under periodic boundary conditions, i.e. $\langle[\bu.\nabla]\bu,\Delta\bu\rangle = 0$ for each $\bu\in \VV$. As we see in \cref{eq:penalized NSE1}, the approximate solution is only slightly compressible, thus $\bue\not\in\VV$. Even with the projection step added, the additional orthogonal property required in \cite{carelli2012rates} is inapplicable here. To overcome this issue we use the classical decomposition of the SNS into an Ornstein--Uhlenbeck process and a deterministic SNS. This decomposition has already been used for different purpose, e.g. in \cite{brzezniak2016a}, \cite{fernando2015mild}, \cite{flandoli1994dissipativity}, \cite{flandoli1999weak}, and \cite{flandoli1995time}. The algorithm depends on the spatial perturbation parameter $\varepsilon>0$, a stability preserving parameter $\alpha>1$, and the time-step $k$. If we fix $\varepsilon = k^\eta$ with some $0<\eta<1/2$ and with any $\alpha>1$, a  speed of convergence in probability of order 1/4 is obtained for both velocity and pressure. Then, by means of the law of total probability, we deduce strong convergence of the scheme for both variables velocity and pressure. In this context, we respond to the lack of results regarding (speed of) convergence for the pressure iterates from algorithms based on pseudo-compressible and projection method for stochastic (Navier)--Stokes equations addressed by \cite{erich2012time}.
	
	This paper is organized as follows. In \cref{se:preliminaries}, we introduce the assumptions and notations used and review some of the basic
	facts of the SNS, which are important for the proof,  such as the time regularity of the solution and present a splitting argument that will be used later on. In the \cref{sec:auxiliary results}, we develop stability of the main algorithm and derive error estimates for some auxiliary algorithms. In \cref{sec:main results}, we treat the speed of convergence in probability, then the strong convergence of the main algorithm.
	
	\section{Preliminaries}
	\label{se:preliminaries}
	In this section, we present the assumptions and notations used in this work. We also prove the time regularity of the pressure.
	As a preparatory work, before going into the numerical analysis, we formulate \eqref{eq:NSE1} according to the classical decomposition of the SNS into an Ornstein--Uhlenbeck process and a deterministic Navier--Stokes depending on a stochastic process.
	\subsection{Functional settings and notations}
	\label{subsec:functional settings and notations}	
	To introduce a spatial variable process, i.e.\ a vector-valued process to the Brownian motion $\bW$, we introduce a family  of mutually independent and identically distributed real-valued Brownian motions $\{\beta_j(t): t\in[0,T]\}$, $j\in\NN$, and a covariance $\bQ$. If $\bQ\in\cL(\mK)$ (the space of bounded linear operators from $\mK$ to $\mK$) is non-negative definite and symmetric with an orthonormal basis $\{\bd_j: j\in \mathbb{N} \}$ of eigenfunctions with corresponding eigenvalues $ q_j\geq 0$ such that $\sum_{j\in\NN}q_j<\infty$, then $\bQ\in\cL_1(\mK)$ (the space of trace-class operator on $\mK$) and the series
	\[
	\bW(t)= \sum_{j=1}^\infty \sqrt{q_j} \beta_j(t) \bd_j, \qquad \forall\; t\in [0,T],
	\]
	converges in $L^2(\Omega; \mC([0,T]; \mK))$ and it defines a $\mK$-valued Wiener process with covariance operator $\bQ$ also called \textit{$\bQ$-Wiener process}. Furthermore,  for any $\ell\in\NN$ there exists a constant $C_\ell >0$ such that
	\begin{equation}
	\label{eq:increment of a wiener process}
	\EE \lVert \bW(t)- \bW(s)\rVert^{2\ell}_\mK \leq C_\ell ( t-s)^\ell \left(\trace \bQ\right)^\ell,\qquad \forall\;t\in [0,T]\quad\mbox{and}\quad \forall\;s\in [0,t).
	\end{equation}
	
	Let $\mH$ be another separable Hilbert space. We define by $\cL_2(\mK_\bQ,\mH)$ the space of Hilbert--Schmidt operator from $\mK_\bQ$ to $\mH$, where $\mK_\bQ$ is the separable Hilbert space defined by $\mK_\bQ:=\bQ^{1/2}\mK$.
	
	We can define the $\mH$-valued It\^o integral with respect to a $\bQ$-Wiener process $\bW$ by
	\[
	\int_0^t \Phi(s)d\bW(s) :=\sum_{j=1}^{\infty}\int_0^t\Phi(s)\sqrt{q_j} \bd_jd\beta_j(s) ,\quad\forall\; t\in [0,T]
	\]
	which is also a $\mH$-valued martingale satisfying the Burkholder--Davis--Gundy inequality (see \cite[Theorem 3.3.28]{karatzas2014brownian}), given by
	\begin{equation}
	\label{eq:BHG inequality}
	\EE\sup_{0\le s\le t}\bigg\Vert \int_0^s \Phi(\tau)d\bW(\tau)\bigg\Vert_{\mH}^{2r} \leq C_r\biggl(\int_0^t \lVert \Phi(\tau) \lVert^2_{\cL_2(\mK_Q, \mH)} d\tau \biggr)^r,\;\forall\; t\in [0,T],\;\forall\; r>0.
	\end{equation}
	
	In the case of scalar functions, we denote the usual Sobolev spaces by $W^{m,2}(D)$ $(m=0,1,2,\ldots,\infty)$. The corresponding scalar product and the corresponding norm for any nonnegative integer $m$ is denoted by
	\[
	(\bu,\bv)_m = \int_{D}\sum_{\ell=0}^{m} \partial^\ell\bu\, \partial^\ell\bv\, d\bx \quad\mbox{ and }\quad
	\lVert \bu\rVert_m = \lVert \bu\rVert_{W^{m,2}} = (\bu,\bu)_m^{1/2}.
	\]
	By $W^{m,2}_0(D)$, we denote the closure in $W^{m,2}(D)$ of the space $\mC^\infty_0(D)$ of all smooth functions defined on $D$ with compact support. Further, $W^{-m,2}(D)$ is the space that is dual to $W^{m,2}(D)\cap W^{1,2}_0(D)$.
	Particularly for $m=0$, the space $W^{m,2}(D)$ is usually denoted by $L^2(D)$ and then the scalar product and norm are denoted simply by $(\cdot,\cdot)$ and $\| \cdot\|$, respectively. We reserve the notation $\la\cdot,\cdot \ra$ for the duality bracket. In general, we denote the usual Lebesgue spaces by $L^p$, $1\leq p\leq \infty $, which are endowed with the standard norms denoted by $\|\cdot\|_{L^p}$. We denote by $L^p_\per$ and $W^{m,2}_\per$ the Lebesgue and Sobolev spaces of
	functions that are periodic and have vanishing spatial average, respectively. The spaces of vector-valued functions will be indicated with Blackboard bold letters, for instance ${\LL^2_\per}:= (L^2_\per)^2$. In further analyses, we will not distinguish between the notation of inner products and norms in scalar or vector-valued applications.
	
	The two spaces frequently used in the theory of Navier--Stokes equations are
	\begin{align*}
	\HH = \left\lbrace \bv\in \LL^2_\per(D):\dv \bv = 0 \mbox{ in } \RR^2\right\rbrace \quad\mbox{ and }\quad
	\VV = \left\lbrace \bv\in {\WW}^{1,2}_\per(D):\dv \bv= 0\mbox{ in } \RR^2\right\rbrace.
	\end{align*}
	The space $\VV$ is a Hilbert space with the scalar product $(\cdot,\cdot)_1$ and the Hilbert norm induced by $\WW^{1,2}$.
	
	Let $\bP_{\HH}$ denote the $\LL^2$-projection on the space $\HH$ also known as \textit{Helmholtz--Leray projector}. As an orthogonal projection, it satisfies the following identity
	\begin{equation}\label{eq:pty ortho proj}
	\langle \bP_{\HH}\bv - \bv,\bP_{\HH}\bv\rangle = 0,\quad\forall\,\bv\in\LL^2_\per.
	\end{equation}
	The projection $\PP_{\HH}$ is continuous from $\WW^{1,2}(D)_0$ into $\WW^{1,2}(D)$ (cf. \cite[Remark 1.6]{temam1984navier} and \cite[Proposition IV.3.7.]{boyer2012mathematical}) and we can find a positive constant $C = C(D)$ such that
	\begin{equation}
	\label{eq:stability of leray projection}
	\lVert\bP_\HH\bu \rVert_1\leq C\lVert\bu \rVert_1,\qquad \forall\;\bu\in\WW^{1,2}(D).
	\end{equation}
	Due to the Helmholtz--Hodge--Leray decomposition, any function $\bu\in \LL^2(D)$ can be represented as
	$
	\bu = \bP_{\HH}\bu + \nabla \qre,
	$
	where $\qre$ is a scalar $D$-periodic function such that $\qre\in L^2_\per(D)$. It is natural to introduce the notation $\bP_{\HH}^{\perp}\bu:=\nabla\qre$ and hence write
	\[
	\bu = \bP_{\HH}\bu + \bP_{\HH}^{\perp}\bu,
	\quad\mbox{ with }\quad
	\bP_{\HH}^{\perp}\bu\in\HH^{\perp}=\left\{ \bv:\bv\in\LL^2(D),\,\bv=\nabla\qre\right\}.
	\]
	%
	With periodic boundary conditions the Stokes operator $\bA=-\bP_\HH\Delta$ coincides with the Laplacian operator $-\Delta$. The operator $\bA$ can be seen as an unbounded positive linear selfadjoint operator on $\HH$ with domain $\cD(A) = {\WW}^{2,2}\cap\VV$. We can define the powers $\bA^\alpha$, $\alpha\in\RR$, with domain $\cD(\bA^\alpha)$. The norm $\lVert \bA^{s/2}\bu\rVert$ on $\cD(\bA^{s/2})$ is equivalence to the norm induced by $\WW^{s,2}_0(D)$. In addition, we also have the following equivalence of norm:
	
	\begin{prop}[Equivalence of norms]\label{prop:equivalence of norms} There exist positive numbers $c_1$ and $c_2$ such that $\forall\, \bu\in \HH$:
		\begin{itemize}
			\item[$(i)$] $\lVert \bA^{-1}\bu\rVert_s\leq c_1\lVert\bu \rVert_{s-2},\quad s=1,2;$
			\vspace{5pt}
			\item[$(ii)$]$c_2\lVert \bu\rVert^2_{-1}\leq (\bA^{-1}\bu,\bu)\leq c_1^2\lVert\bu \rVert_{-1}^2$.
		\end{itemize}
	\end{prop}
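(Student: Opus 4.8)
The plan is to reduce the whole statement to the functional calculus of the Stokes operator $\bA$ on $\HH$, combined with the norm equivalence $\|\bA^{s/2}\bv\|\simeq\|\bv\|_s$ recalled just before the proposition. Recall that on $\HH$ the operator $\bA$ is positive, self-adjoint and has compact resolvent (Rellich, together with the Poincar\'e inequality for mean-zero periodic fields, which provides the spectral gap $\lambda_1>0$ that makes $\bA^{-1}\colon\HH\to\HH$ bounded). Hence there is an orthonormal basis $\{\bw_j\}_{j\in\NN}$ of $\HH$ of eigenfunctions of $\bA$ with eigenvalues $0<\lambda_1\le\lambda_2\le\cdots\to\infty$, and for $\bu=\sum_j u_j\bw_j\in\HH$ the fractional powers act diagonally, $\bA^{t}\bu=\sum_j\lambda_j^{t}u_j\bw_j$ with $\|\bA^{t}\bu\|^2=\sum_j\lambda_j^{2t}u_j^2$; in particular the semigroup identity $\bA^{a}\bA^{b}=\bA^{a+b}$ holds on the relevant domains and $\cD(\bA^{t})=\HH$ for every $t\le 0$. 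I will fix once and for all constants $a_s,b_s>0$ with $a_s\|\bv\|_s\le\|\bA^{s/2}\bv\|\le b_s\|\bv\|_s$ for $s\in\{-1,0,1,2\}$; for the negative index $s=-1$ this is the dual form of the cited equivalence, obtained from the $s=1$ case by duality, using that $\bA^{1/2}$ is self-adjoint and that $\|\cdot\|_{-1}$ is the norm dual to $\|\cdot\|_1$ in the $\LL^2_\per$ pairing.

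For part $(i)$, let $\bu\in\HH$; then $\bA^{-1}\bu\in\cD(\bA)\subset\WW^{2,2}\cap\VV$, so $\|\bA^{-1}\bu\|_s$ is meaningful for $s\in\{1,2\}$, and the semigroup identity gives $\bA^{s/2}\bA^{-1}\bu=\bA^{(s-2)/2}\bu$, which is a well-defined element of $\HH$ because $\bu\in\cD(\bA^{(s-2)/2})=\HH$ for $s\le 2$. Consequently
\[
\|\bA^{-1}\bu\|_s\;\le\;a_s^{-1}\,\|\bA^{s/2}\bA^{-1}\bu\|\;=\;a_s^{-1}\,\|\bA^{(s-2)/2}\bu\|\;\le\;a_s^{-1}b_{s-2}\,\|\bu\|_{s-2},\qquad s=1,2,
\]
and taking $c_1:=\max\{a_1^{-1}b_{-1},\,a_2^{-1}b_0,\,b_{-1}\}$ yields $(i)$; the extra term $b_{-1}$ is kept only so that the same constant can be reused in $(ii)$.

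For part $(ii)$, note that $\bA^{-1/2}$ is a bounded self-adjoint operator on $\HH$, so for $\bu\in\HH$ the pairing $(\bA^{-1}\bu,\bu)$ is simply the $\LL^2$ inner product of $\bA^{-1}\bu\in\cD(\bA)$ with $\bu$, and
\[
(\bA^{-1}\bu,\bu)=(\bA^{-1/2}\bu,\bA^{-1/2}\bu)=\|\bA^{-1/2}\bu\|^2\ge 0 .
\]
Applying the norm equivalence at $s=-1$ to $\bA^{-1/2}\bu=\bA^{(-1)/2}\bu$ then gives $a_{-1}^2\|\bu\|_{-1}^2\le\|\bA^{-1/2}\bu\|^2\le b_{-1}^2\|\bu\|_{-1}^2\le c_1^2\|\bu\|_{-1}^2$, so $(ii)$ holds with $c_2:=a_{-1}^2$.

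The only point that will need a little care is making sure the equivalence $\|\bA^{s/2}\bv\|\simeq\|\bv\|_s$ is genuinely available at the negative exponent $s=-1$ in the form used above (I will get it from the $s=1$ case by the duality argument sketched in the first paragraph). Everything else is the diagonal functional calculus of $\bA$ on $\HH$ plus bookkeeping of constants, so I do not expect any real analytic obstacle once the spectral picture of $\bA$ is in place.
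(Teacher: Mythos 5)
Your argument is correct, but it is worth noting that the paper does not actually prove this proposition: it simply cites \cite[Equation (2.1)]{shen1992onsiam} and \cite[Lemma 2.3]{prohl2013projection}, remarking only that the proof ``relies on the elliptic regularity of the Stokes operator and the definition of negative Sobolev norms.'' What you supply is a complete, self-contained spectral proof, and it is the natural one in this paper's setting: with periodic boundary conditions and vanishing mean, $\bA$ coincides with $-\Delta$, is diagonal in the Fourier basis with spectral gap $\lambda_1>0$, so the semigroup identity $\bA^{s/2}\bA^{-1}=\bA^{(s-2)/2}$ and the identity $(\bA^{-1}\bu,\bu)=\lVert\bA^{-1/2}\bu\rVert^2$ reduce everything to the equivalence $\lVert\bA^{s/2}\bv\rVert\simeq\lVert\bv\rVert_s$ that the paper records just before the proposition. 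The cited sources instead argue via elliptic regularity of the Stokes problem, which is what one must do on a general domain with Dirichlet conditions; your route buys explicitness at the price of being tied to the periodic case, which is all that is needed here. The one genuinely delicate point is the one you flag yourself: extending the equivalence to $s=-1$. Your duality sketch is fine, but to pass from the supremum over all $\bfi\in\WW^{1,2}_\per$ (which defines $\lVert\cdot\rVert_{-1}$) to the supremum over $\bfi\in\VV$ (which yields $\lVert\bA^{-1/2}\bu\rVert$) you should invoke, for $\bu\in\HH$, the orthogonality $(\bu,\bfi)=(\bu,\bP_\HH\bfi)$ together with the $\WW^{1,2}$-stability of the Leray projection \eqref{eq:stability of leray projection}; with that made explicit, the proof is complete and the bookkeeping of $c_1$ and $c_2$ goes through as you wrote it.
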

	
	
	\begin{proof}
		The reader is referred to \cite[Equation (2.1)]{shen1992onsiam} or \cite[Lemma 2.3]{prohl2013projection} for the proof. It relies on the elliptic regularity of the Stokes operator and the definition of negative Sobolev norms.
	\end{proof}
	
	We now introduce some operators usually associated with the Navier--Stokes equations and their approximations. In particular,
	\begin{align*}
	&\bB(\bu,\bv) = [\bu\cdot\nabla]\bv,\quad \tbB(\bu,\bv) = \bB(\bu,\bv) +(\dv\bu)\bv/2,\\
	&b(\bu,\bv,\bw) = \la\bB(\bu,\bv), \bw\ra,\quad \tb(\bu,\bv,\bw) = \langle\tbB(\bu,\bv),\bw\rangle.
	\end{align*}
	The trilinear forms $b$ and $\tb$ satisfy the following properties:
	
	\textit{Skew-symmetry property}
	\begin{equation}
	\label{eq:skew-symmetry}
	\begin{split}
	b(\bu,\bv,\bw) &= -b(\bu,\bw,\bv),\qquad \bu\in\HH\;\mbox{ and }\;\bv,\bw \in \VV,\\
	\tb(\bu,\bv,\bw) &= -\tb(\bu,\bw,\bv),\qquad \bu,\bv\in\WW^{1,2}(D)\;\mbox{ and }\;\bw \in \WW^{1,2}_\per(D).
	\end{split}
	\end{equation}
	
	\textit{Orthogonal property}
	\begin{equation}
	\label{eq:orthogonal property 1}
	\begin{split}
	b(\bu,\bv,\bv)=0,\quad\forall\;\bu\in\HH,\,\forall\;\bv\in\WW^{1,2}_\per(D);\quad
	\tb(\bu,\bv,\bv)=0,\quad\forall\;\bu,\bv\in\WW^{1,2}_\per(D).
	\end{split}
	\end{equation}
	%
	
	The following estimates of the trilinear form $\tb$ will be used repeatedly in the upcoming sections. Let $\bv\in{\WW}^{2,2}(D)\cap{\WW}^{1,2}_\per(D)$ and $\bu,\bw\in {\WW}^{1,2}_\per(D)$; a combination of integration by parts and H\"older inequality gives
	\begin{align}
	\label{eq:estimate 1 of tb}
	\tb(\bu,\bv,\bw)&\leq \lVert \bu\rVert_{\LL^4}\lVert \bv\rVert_{1}\lVert \bw\rVert_{{\LL^4}}.
	\intertext{	From this estimate we can deduce using the Sobolev embedding $\WW^{1,2}(D)\subset\LL^4(D)$,}
	\label{eq:estimate 2 of tb}
	\tb(\bu,\bv,\bw)&\leq C(L)\lVert \bu\rVert_1\lVert \bv\rVert_{1}\lVert \bw\rVert_1,\qquad
	\intertext{or using the Ladyzhenskaya's inequality $\lVert \bu\rVert_{\LL^4}\leq C(L)\lVert \bu\rVert^{1/2}\lVert \bu\rVert_1^{1/2}$,}
	\label{eq:estimate 3 of tb}
	\tb(\bu,\bv,\bw)&\leq C(L)\lVert \bu\rVert^{1/2}\lVert \bu\rVert_1^{1/2}\lVert \bv\rVert_{1}\lVert \bw\rVert^{1/2}\lVert \bw\rVert_1^{1/2}.
	\end{align}
	To find more about the above properties or additional properties of $b$ or $\tb$, and other estimates, the reader is referred to \cite[Section 2.3]{temam1983navier}.

	\subsection{General assumption and spatial regularity of the solution}\label{subsec:gnl assump and spatial regularity}
	In the following we choose  $\mH=\VV$, i.e.~a solenoidal noise in SNS. An example of solenoidal noise is given in \cite[Section 6]{erich2012time}.
	We summarize the assumptions needed for data $\bW$, $\bQ$, and $\buo$:
	
	\crefname{enumi}{Assumption}{Assumptions}
	\begin{enumerate}[label=($S_\arabic*$)]
		\item\label{S1} For $\bQ\in\cL(\mK)$, let $\bW = \{\bW(t): t\in[0,T]\}$ be a $\bQ$-Wiener process with values in a separable Hilbert space $\mK$ defined on the stochastic basis $\mathfrak{P}$.
		\item\label{S4}  $\buo\in \VV$.
	\end{enumerate}
	
	\medskip
	
	In addition, we recall the notion of a strong solution to \eqref{eq:NSE1}.
	
	\begin{defn}[Strong solution]\label{def:strong solution} Let $T>0$ be given and let \cref{S1,S4} be valid, with $\mH=\VV$. A $\VV$-valued process $\bu = \{\bu(t,\cdot):t\in[0,T]\}$ on $(\cF_t)_{0\leq t\leq T}$ is a strong solution to \eqref{eq:NSE1} if
		\begin{enumerate}[label=(\roman*)]
			\item $\bu(\cdot,\cdot,\omega)\in \mC([0,T];\VV)\cap L^2(0,T;{\WW}^{2,2}\cap\VV)$\,$\PP\mbox{-a.s.}$,
			\item for every $t\in[0,T]$ and every $\bfi\in\VV$, there holds $\PP\mbox{-a.s.}$
			\[
			\begin{split}
			(\bu(t),\bfi) &+ \nu\int_0^t(\nabla\bu(s),\nabla\bfi)+ b(\bu(s),\bu(s),\bfi) ds = (\buo,\bfi)+\int_0^t\left(\bfi,d\bW(s)\right).\quad
			\end{split}
			\]
		\end{enumerate}
	\end{defn}
	%
	If \cref{S1} holds and $\mH = \VV$, we can prove (cf. \cite[Appendix 1]{flandoli1995martingale}) that the solutions $\bu$ of \eqref{eq:NSE1} as defined by Definition~\ref{def:strong solution} satisfies for $2\leq p<\infty$ the estimate
	\begin{align}
	\label{eq:L2-estimate of u}
	\EE\sup_{0\leq t\leq T}\lVert\bu(t) \rVert^p +\nu\EE \left[\int_{0}^{T}\lVert \bu(s)\rVert^{p-2}\lVert \nabla\bu(s)\rVert^2 ds\right]&\leq C_{T,p},
	\intertext{where $C_{T,p}= C_{T,p}(\trace \bQ,\EE\lVert
		\buo\rVert^p,\EE\lVert \buo\rVert^p_\VV)>0$. In addition to the above estimate, if \cref{S4} holds for $2\leq p<\infty$, it is proven in \cite[Lemma 2.1]{carelli2012rates} that $\bu$ satisfies also the estimates}
	\label{eq:V-estimate of u 1}
	\sup_{0\leq t \leq T}\EE \lVert \bu(t)\rVert^p_{\VV} +  \nu \EE\left[\int_0^T\lVert \bu(s)\rVert_\VV^{p-2}\lVert\bA\bu(s) \rVert ^2 ds\right]&\leq C_{T,p},
	\\
	\label{eq:V-estimate of u 2}
	\mbox{and}\quad \EE \sup_{0\leq t\leq T}\lVert \bu(t)\rVert^p_{\VV}&\leq C_{T,p}.
	\end{align}
	
	We associate a pressure $\pre$ to the velocity $\bu$ by using a generalization of the de Rham theorem to processes, see \cite[Theorem 4.1]{langa2003existence}.
	In addition, we also have the following estimate for the pressure:
	
	\begin{prop}\label{prop:bound for the pressure}
		Under \cref{S1,S4}, there exists a  constant $C>0$ such that the velocity fields $\bu$ and pressure fields $\pre$ satisfy $\PP$-a.s.
		\begin{equation}
		\label{eq:bound for the pressure}
		\lVert\pre(t) \rVert\leq C\lVert \bA^{1/2}\bu(t)\rVert^2,\qquad\forall\; t\in[0,T].
		\end{equation}
	\end{prop}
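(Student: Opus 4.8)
The plan is to derive an explicit Poisson-type identity for $\pre$ by applying the orthogonal complement of the Leray projector to the momentum equation, and then to combine elliptic regularity with Ladyzhenskaya's inequality. First I would apply $\bP_\HH^\perp=\id-\bP_\HH$ to the (integrated, It\^o) form of the momentum equation in \eqref{eq:NSE1}. Since $\bu(t)\in\VV\subset\HH$ for every $t$, the increment $d\bu$ lies in $\HH$ and is annihilated by $\bP_\HH^\perp$; since the Stokes operator coincides with $-\Delta$ under periodic boundary conditions, $\Delta\bu\in\HH$ as well, so $\bP_\HH^\perp\Delta\bu=0$; and since the noise is solenoidal ($\mH=\VV$, so $\bW(t)\in\VV$ $\PP$-a.s.), $\bP_\HH^\perp\,d\bW=0$. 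The pressure term, being a pure gradient, is untouched, $\bP_\HH^\perp\nabla\pre=\nabla\pre$, so the only surviving contribution is the nonlinearity, and one obtains, for a.e.\ $t$ and $\PP$-a.s.,
\[
\nabla\pre(t)=-\bP_\HH^\perp\big([\bu(t)\cdot\nabla]\bu(t)\big)=-\bP_\HH^\perp\,\dv\big(\bu(t)\otimes\bu(t)\big),
\]
using $\dv\bu=0$ in the last identity. Normalizing $\pre$ to have vanishing spatial average, this reads $\pre(t)=-\Delta^{-1}\dv\dv\big(\bu(t)\otimes\bu(t)\big)$ on $D$.

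Next I would invoke elliptic regularity on the torus: each Fourier multiplier $\Delta^{-1}\partial_i\partial_j$ has symbol $\xi_i\xi_j/\lvert\xi\rvert^2$, bounded by $1$ in modulus, so $\Delta^{-1}\dv\dv$ maps $\LL^2_\per$ boundedly into $L^2_\per$. Since $\bu(t)\in\VV\subset\LL^4$ by the two-dimensional Sobolev embedding, $\bu(t)\otimes\bu(t)\in\LL^2$ and
\[
\lVert\pre(t)\rVert\le C\,\lVert\bu(t)\otimes\bu(t)\rVert\le C\,\lVert\bu(t)\rVert_{\LL^4}^2 .
\]
Then Ladyzhenskaya's inequality $\lVert\bu\rVert_{\LL^4}\le C(L)\lVert\bu\rVert^{1/2}\lVert\bu\rVert_1^{1/2}$ (already used in \eqref{eq:estimate 3 of tb}), together with the Poincar\'e inequality for zero-average periodic fields, which gives $\lVert\bu\rVert_1\le C\lVert\bA^{1/2}\bu\rVert$ and $\lVert\bu\rVert\le C\lVert\bA^{1/2}\bu\rVert$, yields $\lVert\bu(t)\rVert_{\LL^4}^2\le C\lVert\bA^{1/2}\bu(t)\rVert^2$, hence \eqref{eq:bound for the pressure}. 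Finally, since $\bu\in\mC([0,T];\VV)$ $\PP$-a.s. by \eqref{eq:V-estimate of u 2} under \cref{S4}, the map $t\mapsto\bu(t)\otimes\bu(t)$ is continuous with values in $\LL^2$, so $\pre$ has a version in $\mC([0,T];L^2_\per)$ and the inequality holds for all $t\in[0,T]$, not merely almost everywhere.

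The main obstacle is the first step: justifying rigorously, in the stochastic framework, that the de Rham pressure process satisfies the pointwise-in-time identity $\nabla\pre=-\bP_\HH^\perp([\bu\cdot\nabla]\bu)$. This requires treating the momentum equation as an identity between $\WW^{-1,2}$-valued (or $\VV'$-valued) processes, checking that $\bP_\HH^\perp$ annihilates the stochastic integral $\int_0^t d\bW$ pathwise --- which is precisely where the solenoidal assumption on the noise is indispensable, a general noise contributing a gradient part that would spoil the clean bound --- and verifying that the right-hand side $\dv(\bu\otimes\bu)$, with $\bu\otimes\bu\in\mC([0,T];\LL^2)$, is regular enough for $\Delta^{-1}\dv\dv$ of it to define a genuine $L^2_\per$-valued continuous process. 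Once this structural identity is in place, the remaining steps are routine functional-analytic estimates.
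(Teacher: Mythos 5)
Your proof is correct and follows essentially the same route as the paper: both apply $\bP_\HH^\perp$ to the momentum equation (using that the noise is solenoidal) to obtain $\nabla\pre=-\bP_\HH^\perp\bB(\bu,\bu)$, and both ultimately bound $\lVert\pre(t)\rVert$ by $C\lVert\bu(t)\rVert_{\LL^4}^2\le C\lVert\bA^{1/2}\bu(t)\rVert^2$ via Ladyzhenskaya's inequality. The only difference is cosmetic: the paper routes the estimate through the Ne\v cas inequality $\lVert\pre\rVert\le C\lVert\nabla\pre\rVert_{-1}$ together with a cited bound from Giga--Miyakawa, whereas you invert the Laplacian explicitly and use the $L^2$-boundedness of the multipliers $\xi_i\xi_j/\lvert\xi\rvert^2$ --- which is exactly the Fourier-multiplier computation the paper itself identifies as the source of its Ne\v cas constant.
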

	\begin{proof}
		To show the Proposition \ref{prop:bound for the pressure} we project equation  \eqref{eq:NSE1} into $\HH^\perp$ using the projection operator $\bP^{\perp}_\HH$. Since $\bP^{\perp}_\HH$ commutes with the Laplacian operator (we work with a periodic boundary condition) and $\dv\bu = 0$, then
		\[
		\bP^{\perp}_\HH\bu_t = 0\qquad\mbox{ and}\qquad \bP^{\perp}_\HH\Delta\bu = 0.
		\]
		In \cref{S1} we suppose that the forcing term is divergence-free, hence, each solenoidale term vanishes after projection with $\bP^{\perp}_\HH$. The remaining terms give
		\begin{equation*}
		\label{eq:grad pressure wrt velocity 1}
		\nabla\pre (t) = -\bP_\HH^{\perp}\bB(\bu(t),\bu(t)),\qquad\forall\;t\in[0,T].
		\end{equation*}
		It follows from \cite[Lemma~2.2]{giga1985solution} for $r= 2,n=2,\,\delta = 1/2,\theta=\rho=1/2$ and \eqref{eq:estimate 3 of tb}
		that
		\begin{align*}
		\lVert\nabla\pre(t)\rVert_{{-1}}= \lVert\bP_\HH^{\perp}\bB(\bu(t),\bu(t))\rVert_{{-1}}
		&\leq \lVert\bP_\HH\bB(\bu(t),\bu(t))\rVert_{{-1}} + \lVert\bB(\bu(t),\bu(t))\rVert_{{-1}}
		\\
		&\leq C\lVert \bA^{1/2}\bu(t)\rVert^2.
		\end{align*}
		Finally, it follows by the  Ne\v cas inequality for functions with vanishing spatial average (cf. \cite[Proposition IV.1.2.]{boyer2012mathematical}), that there exists a constant $C > 0$, such that
		\begin{equation}
		\label{eq:first estimate of the pressure}
		\lVert\pre(t)\rVert \leq C\lVert \bA^{1/2}\bu(t)\rVert^2,\qquad\forall\;t\in[0,T].
		\end{equation}
		The constant $C$ comes from the  Ne\v cas inequality, more precisely from the definition of the norm in $\WW^{-1,2}$ by the Fourier transform. Therefore,  $C$ depends on the spatial dimension $d$ and the $L^p$-estimates for the Fourier transform multipliers. Here we have $d=2$ and $p=2$, but a similar estimate can be obtained for $d\geq 2$ and $2\leq p<\infty$, see \cite[Corollaries 1 and 2]{cattabriga1961su} and \cite[Lemma 7.1]{necas2011direct}.
	\end{proof}
	\subsection{Regularity in time of the solution of the SNS}

	\begin{lem}\label{lem:holder continuity of u}
		Suppose that \cref{S1} holds, and $\mH = \VV$. For the solution of \eqref{eq:NSE1}, with $\buo\in  \VV$, $2\leq p<\infty,$ we can find a constant $C=C(T,p,L)>0$, such that for $0\leq s<t\leq T$ we have
		\begin{enumerate}[label=$(\roman*)$]
			\vspace{0.75em}
			\item\label{item:holder continuity of u 1}$\EE\lVert \bu(s)- \bu(t)\rVert^{p}_{\LL^4}\leq C\lvert s-t\rvert^{\eta p}\;\quad \forall\; 0<\eta<\tfrac12$,
			\vspace{0.65em}
			\item\label{item:holder continuity of u 2}$\EE\lVert \bu(s)- \bu(t)\rVert^{p}_{\VV}\;\,\leq C\lvert s-t\rvert^{\tfrac{\eta p}{2}} \quad \forall\; 0<\eta<\tfrac12$,
			\vspace{0.75em}
			\item\label{item:holder continuity of u 3}$\EE\lVert \pre(s)-\pre(t)\rVert^{\tfrac{p}{2}}\;\;\;\leq  C\lvert s-t\rvert^{\tfrac{\eta p}{4}}\quad \forall\; 0<\eta<\tfrac12$.
		\end{enumerate}
	\end{lem}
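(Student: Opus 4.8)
The plan is to use the decomposition $\bu=\bz+\bv$ of \eqref{eq:NSE1} into an Ornstein--Uhlenbeck process $\bz$ and a deterministic part $\bv$ (the splitting introduced earlier in this section): $\bz$ solves the linear stochastic Stokes problem $d\bz+\nu\bA\bz\,dt=d\bW$, $\bz(0)=0$, and $\bv:=\bu-\bz$ solves the pathwise-deterministic Navier--Stokes system $\bv_t+\nu\bA\bv+\bP_\HH\tbB(\bu,\bu)=0$, $\bv(0)=\buo$. The increments $\bz(t)-\bz(s)$ and $\bv(t)-\bv(s)$ are estimated separately and added; the H\"older exponent arbitrarily close to $\tfrac12$ comes from $\bz$, whereas $\bv$ only carries the regularity allowed by the data $\buo\in\VV$ and by the nonlinearity. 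The ingredients are the moment estimates \eqref{eq:L2-estimate of u}--\eqref{eq:V-estimate of u 2}, the Wiener increment bound \eqref{eq:increment of a wiener process}, the Burkholder inequality \eqref{eq:BHG inequality}, the trilinear estimates \eqref{eq:estimate 1 of tb}--\eqref{eq:estimate 3 of tb}, and the analyticity of $e^{-\nu t\bA}$ on $\HH$ in the form $\|\bA^{\sigma}e^{-\nu t\bA}\|_{\cL(\HH)}\le Ct^{-\sigma}$ and $\|(\mathrm{Id}-e^{-\nu t\bA})\bA^{-\sigma}\|_{\cL(\HH)}\le Ct^{\sigma}$, $0\le\sigma\le1$.

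\emph{Ornstein--Uhlenbeck part.} Since $\bW$ is $\VV$-valued, parabolic regularization gives $\bz(t)\in\cD(\bA^{\theta})$ for every $\theta<1$ with $\sup_{0\le t\le T}\EE\|\bA^{\theta}\bz(t)\|^{2r}<\infty$ for all $r\ge1$. From
\[
\bz(t)-\bz(s)=\bigl(e^{-\nu(t-s)\bA}-\mathrm{Id}\bigr)\bz(s)+\int_s^t e^{-\nu(t-\tau)\bA}\,d\bW(\tau),
\]
the first term satisfies $\|(e^{-\nu(t-s)\bA}-\mathrm{Id})\bz(s)\|_\VV\le C|t-s|^{\gamma}\|\bA^{1/2+\gamma}\bz(s)\|$ for every $\gamma<\tfrac12$, hence has all moments of order $|t-s|^{\gamma}$; the second is bounded via \eqref{eq:BHG inequality} in $\VV$, the contractivity of $e^{-\nu t\bA}$ on $\VV$, and the $\VV$-valuedness of $\bW$, giving $\EE\bigl\|\int_s^t e^{-\nu(t-\tau)\bA}d\bW\bigr\|_\VV^{2r}\le C|t-s|^{r}$. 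As $\VV\hookrightarrow\LL^4$, $\bz$ is thus H\"older continuous of every order $<\tfrac12$ in $\VV$, a fortiori in $\LL^4$, in every moment.

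\emph{Deterministic part.} Combining \eqref{eq:estimate 3 of tb} with the Ladyzhenskaya inequality gives $\|\tbB(\bu,\bu)\|_\HH\le C\|\bu\|^{1/2}\|\bu\|_\VV\|\bA\bu\|^{1/2}$, so \eqref{eq:V-estimate of u 1}--\eqref{eq:V-estimate of u 2} yield $\bP_\HH\tbB(\bu,\bu)\in L^{r_0}(0,T;\HH)$ with finite moments, for some $r_0>2$; in particular $\bv\in L^\infty(0,T;\VV)$ and $\bv_t\in L^2(0,T;\HH)$. From the mild representation $\bv(t)=e^{-\nu t\bA}\buo-\int_0^t e^{-\nu(t-\tau)\bA}\bP_\HH\tbB(\bu(\tau),\bu(\tau))\,d\tau$ and the semigroup smoothing, one estimates $\bv(t)-\bv(s)$ in $\LL^4$ and in $\VV$ by treating separately the $e^{-\nu t\bA}\buo$-part (controlled through $\buo\in\VV=\cD(\bA^{1/2})$, e.g.\ $\|\bA^{1/4}(\mathrm{Id}-e^{-\nu(t-s)\bA})e^{-\nu s\bA}\buo\|\le C|t-s|^{1/4}\|\buo\|_\VV$ for the $\LL^4$-norm, and its analogue in $\VV$) and the Duhamel part (whose time-H\"older continuity follows from $\|\bA^{\sigma}e^{-\nu r\bA}\|\le Cr^{-\sigma}$ and H\"older's inequality in time, the integrability exponent $r_0>2$ supplying the needed room). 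Together with the $\bz$-bounds this gives (i) and (ii).

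\emph{Pressure.} Projecting \eqref{eq:NSE1} onto $\HH^{\perp}$ as in the proof of \cref{prop:bound for the pressure} gives $\nabla\pre(t)=-\bP_\HH^{\perp}\tbB(\bu(t),\bu(t))$; using $\tbB(\bu(t),\bu(t))-\tbB(\bu(s),\bu(s))=\tbB(\bu(t)-\bu(s),\bu(t))+\tbB(\bu(s),\bu(t)-\bu(s))$, the estimate \eqref{eq:estimate 2 of tb}, the $\WW^{-1,2}_\per$-boundedness of $\bP_\HH^{\perp}$, and the Ne\v cas inequality for functions with vanishing spatial average, one obtains $\|\pre(t)-\pre(s)\|\le C\|\bu(t)-\bu(s)\|_\VV\bigl(\|\bu(t)\|_\VV+\|\bu(s)\|_\VV\bigr)$; taking $(p/2)$-th moments and applying the Cauchy--Schwarz inequality in $\Omega$ together with \eqref{eq:V-estimate of u 2} and part (ii), $\EE\|\pre(t)-\pre(s)\|^{p/2}\le C\bigl(\EE\|\bu(t)-\bu(s)\|_\VV^{p}\bigr)^{1/2}\le C|t-s|^{\eta p/4}$, which is (iii). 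I expect the main obstacle to be the $\VV$-estimate of $\bv(t)-\bv(s)$: since $\bA\bv$ is controlled only in $L^2(0,T;\HH)$, not pointwise in $t$, the non-mild integral formulation is useless there, and one must combine analytic-semigroup smoothing with the improved time-integrability ($L^{r_0}$, $r_0>2$) of the nonlinearity, which in turn forces one to propagate high moments through \eqref{eq:V-estimate of u 1}--\eqref{eq:V-estimate of u 2}; the Ornstein--Uhlenbeck estimate and the reduction of the pressure to (ii) are routine by comparison.
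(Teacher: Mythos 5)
Your part (iii) is essentially the paper's own proof: project \eqref{eq:NSE1} onto $\HH^\perp$ to get $\nabla\pre(t)=-\bP_\HH^{\perp}\bB(\bu(t),\bu(t))$, split the bilinear term, pass to the $L^2$-norm of $\pre(s)-\pre(t)$ via the Ne\v cas inequality, and close with Cauchy--Schwarz on $\Omega$, the moment bound \eqref{eq:V-estimate of u 1}, and the $\VV$-increment estimate. (You correctly feed assertion (ii) into this step; the paper's text cites ``assertion (i)'' there, but the exponent $\lvert s-t\rvert^{\eta p/4}$ can only come from (ii), so your version is the right one.) For (i) and (ii), by contrast, the paper gives no proof at all --- it quotes them verbatim from Carelli--Prohl, Lemma 2.3, whose argument works directly from the variational form and the a priori bounds, not from the Ornstein--Uhlenbeck splitting and the mild formulation that you propose. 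So on (i)--(ii) you are taking a genuinely different, self-contained route.

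That route has a concrete gap at the initial time which you have not located (you point instead at the integrability of the nonlinearity). With $\buo\in\VV$ only, the free-evolution term in your Duhamel formula is the obstruction: your own display gives $\lVert \bA^{1/4}(\mathrm{Id}-e^{-\nu(t-s)\bA})e^{-\nu s\bA}\buo\rVert\le C\lvert t-s\rvert^{1/4}\lVert\buo\rVert_\VV$, i.e.\ only H\"older exponent $1/4$ in $\LL^4$, whereas (i) claims every $\eta<\tfrac12$. To push past $1/4$ you must write $\lVert(\mathrm{Id}-e^{-\nu(t-s)\bA})\bA^{-\gamma}\rVert\,\lVert \bA^{1/4+\gamma}e^{-\nu s\bA}\buo\rVert\le C\lvert t-s\rvert^{\gamma}s^{-(\gamma-1/4)}\lVert\buo\rVert_\VV$, which blows up as $s\to0$; and in the $\VV$-norm the same term $(\mathrm{Id}-e^{-\nu(t-s)\bA})e^{-\nu s\bA}\buo$ carries no positive rate at all near $s=0$ for generic $\buo\in\cD(\bA^{1/2})$ (take $\buo=\bA^{-1/2}\bff$ with $\bff\in\HH$ arbitrary). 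So your argument proves (i) only for $\eta\le 1/4$ uniformly in $s$, or for all $\eta<\tfrac12$ only on $s\ge\delta>0$, and similarly falls short of (ii); the Ornstein--Uhlenbeck and Duhamel contributions are fine, as you checked. To recover the full statement you either need additional regularity of $\buo$, or you should abandon the mild formulation for the increment of $\bv$ and argue as in the cited lemma of Carelli--Prohl (which is what the paper implicitly relies on).
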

	
	\medskip
	
	\begin{proof} The assertions $(i)$ and $(ii)$ are direct quotations of \cite[Lemma 2.3]{carelli2012rates}. We only prove the assertion $(iii)$. Let $t\in[0,T]$. Applying the projection $\bP_\HH^{\perp}$ on \eqref{eq:NSE1} we get
		\[
		\nabla\pre(t) = -\PP_{\HH}^{\perp}\bB(\bu(t),\bu(t)).
		\]
		The following identity  holds 	for $0\leq s<t$
		\begin{align}
		\nabla(\pre(s)-\pre(t)) = \PP_{\HH}^{\perp}\bB(\bu(t),\bu(t)-\bu(s)) + \PP_{\HH}^{\perp}\bB(\bu(t)-\bu(s),\bu(s)).
		\end{align}
		Using the Ne\v cas inequality for vanishing spatial average  and Proposition~\ref{prop:bound for the pressure}, we obtain
		\begin{align*}
		\lVert\pre(s)-\pre(t)\rVert &\leq \lVert\PP_{\HH}^{\perp}\bB(\bu(t),\bu(t)-\bu(s))\rVert_{-1} + \lVert\PP_{\HH}^{\perp}\bB(\bu(t)-\bu(s),\bu(s))\rVert_{-1}
		\\
		&\leq C\lVert\bu(t)\rVert_1\lVert\bu(t)-\bu(s)\rVert_1 + C\lVert\bu(t)-\bu(s)\rVert_1\lVert\bu(s)\rVert_1.
		\intertext{Taking the $p/2$-moment and using the H\"older inequality we get}
		\EE\lVert\pre(s)-\pre(t)\rVert^{p/2}
		&
		\leq C(L)\left[\left(\EE\lVert\bu(t)\rVert_{1}^{p}\right)^{1/2}+\left(\EE\lVert\bu(s)\rVert_{1}^{p}\right)^{1/2}\right]\left(\EE\lVert\bu(t)-\bu(s)\rVert_1^{p}\right)^{1/2}.
		\end{align*}
		We deduce from \eqref{eq:V-estimate of u 1} and the assertion $(i)$ of the present lemma that
		\begin{equation}
		\EE\lVert\pre(s)-\pre(t)\rVert^{p/2}\leq C_{T,2}(L)\lvert s-t\rvert^{\eta p/4}.
		\end{equation}

	\end{proof}
	
	\subsection{Classical decomposition of the solution} Before going to the next section we introduce a splitting argument which is essential for the rest of the paper. We consider the auxiliary Stokes equation
	\begin{equation}
	\label{eq:NSE auxiliary}
	\left\{
	\begin{split}
	d\bz + \left[- \nu\Delta\bz +  \nabla \pi\right]dt &= d\bW, \mbox{ in } \RR^2,
	\\
	\dv\bz &= 0, \mbox{ in } \RR^2,
	\end{split}
	\right.
	\end{equation}
	with $\bz(0) = 0$ and which corresponds the penalized system
	\begin{equation}
	\label{eq:NSE auxiliary penalized}
	\left\{
	\begin{split}
	d\bze + \left[- \nu\Delta\bze +  \nabla \pie\right]dt &= d\bW, \mbox{ in } \RR^2,
	\\
	\dv\bze + \varepsilon\pie &= 0, \mbox{ in } \RR^2,
	\end{split}
	\right.
	\end{equation}
	with $\bze(0)= 0$.
	
	As already pointed out by \cite{carelli2012rates}, the nonlinear term of the SNS does not allow  to use a Gronwall argument. To tackle this issue, we use the classical decomposition
	of the solution $\bu$ into two parts: one part, given by the process  $\bz$,  will be random,  but linear;  the other part, denoted by  $\bv$,
	will be nonlinear, but deterministic. In this way, we  write the solution of \eqref{eq:NSE1} as $\bu = \bv + \bz$, where $\bv$ solves
	\begin{equation}
	\label{eq:NSE auxiliary 2}
	\left\{
	\begin{split}
	\frac{d\bv}{dt} + \tbB(\bv + \bz,\bv + \bz) - \nu\Delta\bv +  \nabla \rho &= 0, \mbox{ in } \RR^2,
	\\
	\dv\bv &= 0, \mbox{ in } \RR^2,
	\end{split}
	\right.
	\end{equation}
	with $\bv(0)=\buo$. The corresponding penalized system
	\begin{equation}
	\label{eq:NSE auxiliary 2 penalized}
	\left\{
	\begin{split}
	\frac{d\bve}{dt} + \tbB(\bve + \bze,\bve + \bze) - \nu\Delta\bve +  \nabla \rhoe &= 0, \mbox{ in } \RR^2,
	\\
	\dv\bve + \varepsilon\rhoe &= 0, \mbox{ in } \RR^2,
	\end{split}
	\right.
	\end{equation}
	with $\bve(0)=\buo$.
	
	The system \eqref{eq:NSE auxiliary 2} (resp. \eqref{eq:NSE auxiliary 2 penalized}) are interpreted as deterministic equations which solves $\bv$ (resp. $\bve$) for a given random process $\bz$ (resp.\ $\bze$).
	\section{Main algorithm and auxiliary results}
	\label{sec:auxiliary results}
	
	We consider a time discretization of \eqref{eq:NSE1} based on  the penalized system \cref{eq:penalized NSE1}. For that purpose we fix $M\in\NN$ and introduce an equidistant partition $I_k:=\{ t_\ell:1\leq \ell\leq M\}$ covering $[0,T]$ with mesh-size $k=T/M>0$, $t_0= 0$, and $t_M= T$. Here the increment $\Delta_\ell\bW := \bW(t_\ell) - \bW(t_{\ell-1})\sim\mathcal{N}(0,kQ)$ and we choose an uniform mesh size $k:= t_{\ell+1}-t_{\ell}$. For every $t\in[t_{\ell-1},t_{\ell}]$ and all $\bfi\in\WW^{1,2}_\per$, there hold $\PP\mbox{-a.s.}$,
	\begin{align}
	\label{eq:SNS vf dans H10}	
	\begin{split}
	(\bu(t_{\ell})-\bu(t_{\ell-1}),\bfi) + \nu\int_{t_{\ell-1}}^{t_\ell}(\nabla\bu(s),\nabla\bfi)ds
	\\
	+ \int_{t_{\ell-1}}^{t_\ell} \tb(\bu(s), \bu(s),\bfi)ds+\int_{t_{\ell-1}}^{t_\ell} (\nabla \pre(s),\bfi)ds &= \int_{t_{\ell-1}}^{t_\ell}\left(\bfi,d\bW(s)\right),
	\end{split}	
	\\
	\label{eq:incomp cond with chi}
	(\dv\bu(t_\ell),\chi) &=0.
	\end{align}
	Note that instead of $b$ we use $\tb$. We can switch between both without any confusion since for each $s \in [0,T]$, $\bu(s)\in\HH$.
	
	Now we discretize the penalized system \cref{eq:penalized NSE1} instead of the original equation and project the result into $\HH$. We derive the following algorithm:
	\begin{algo}[Main algorithm]\label{algo:penalty method}
		Assume $\bu^{\varepsilon,0}:=\buo$ with $\lVert \buo\rVert\leq C$. Find for every $\ell\in\{1,\ldots,M\}$ a pair of random variables $(\buel,\preel)$ with values in $\WW^{1,2}_\per\times L^2_\per$, such that we have $\PP$-a.s. 
		
		$\bullet$ Penalization:
		\begin{align}
		\label{eq:numerical scheme penalty vf1}
		\begin{split}
		(\tbuel - \buell,\bfi)+\nu k(\nabla\tbuel,\nabla\bfi)+k\tb(\tbuel, \tbuel,\bfi)
		\\
		+k(\nabla \tpreel,\bfi )+k(\nabla \fiell,\bfi )&= (\Delta_\ell\bW,\bfi),\;\forall\; \bfi\in \WW^{1,2}_\per,
		\end{split}
		\\
		\label{eq:numerical scheme penalty vf2}
		(\dv\tbuel,\chi) + \varepsilon (\tpreel,\chi)&=0,\;\forall\; \chi \in L^2_\per;
		\end{align}
		
		$\bullet$ Projection:
		\begin{align}
		\label{eq:numerical scheme projection 1}
		(\buel - \tbuel,\bfi) + \alpha k(\nabla(\fiel-\fiell),\bfi)  &=0,\;\forall\; \bfi\in \WW^{1,2}_\per,
		\\
		\label{eq:numerical scheme projection 2}
		(\dv \buel,\chi) &= 0,\;\forall\; \chi \in L^2_\per,
		\\\notag
		\preel&=\tpreel + \fiel + \alpha(\fiel -\fiell).
		\end{align}
	\end{algo}

	\begin{prop}\label{prop:existence and measurability}
		There exist iterates $\{\bu^\ell:1\leq\ell\leq M \}$ which solve \eqref{eq:numerical scheme penalty vf1} and \eqref{eq:numerical scheme penalty vf2} at each time-step. Moreover, for every integer $l$, with $1\leq\ell\leq M$, $\bu^\ell$ is a  $\cF_{t_\ell}$-measurable.
	\end{prop}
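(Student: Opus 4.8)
The plan is to prove existence and $\cF_{t_\ell}$-measurability jointly by induction on $\ell$. At step $\ell$ we assume $\bull$ (equivalently $\tbull$ and $\fiill$) is already constructed and $\cF_{t_{\ell-1}}$-measurable; since $\Delta_\ell\bW = \bW(t_\ell)-\bW(t_{\ell-1})$ is $\cF_{t_\ell}$-measurable, it suffices to solve the nonlinear system \eqref{eq:numerical scheme penalty vf1}--\eqref{eq:numerical scheme penalty vf2} pathwise, i.e.\ treat $\bull$, $\fiill$, and $\Delta_\ell\bW$ as fixed data and exhibit a solution $\tbuel$, together with a measurable selection in those data. First I would reduce the coupled pair to a single equation for $\tbuel\in\WW^{1,2}_\per$: from \eqref{eq:numerical scheme penalty vf2} one reads off $\tpreel = -\varepsilon^{-1}\dv\tbuel$, and the term $k(\nabla\fiell,\bfi)$ is known data. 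Substituting into \eqref{eq:numerical scheme penalty vf1} yields, for all $\bfi\in\WW^{1,2}_\per$,
\[
(\tbuel,\bfi) + \nu k(\nabla\tbuel,\nabla\bfi) + \frac{k}{\varepsilon}(\dv\tbuel,\dv\bfi) + k\,\tb(\tbuel,\tbuel,\bfi) = (\bull - k\nabla\fiill + \Delta_\ell\bW,\bfi),
\]
which is a stationary Navier--Stokes--type problem with an added grad-div penalization and a zeroth-order mass term $(\tbuel,\bfi)$.

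The core analytic step is to solve this nonlinear elliptic problem by a standard Galerkin plus monotonicity/compactness argument. Define the nonlinear operator $\mathcal{A}\colon\WW^{1,2}_\per\to(\WW^{1,2}_\per)^*$ by $\langle\mathcal{A}\bv,\bfi\rangle = (\bv,\bfi) + \nu k(\nabla\bv,\nabla\bfi) + \tfrac{k}{\varepsilon}(\dv\bv,\dv\bfi) + k\,\tb(\bv,\bv,\bfi)$. Using the orthogonality property \eqref{eq:orthogonal property 1}, namely $\tb(\bv,\bv,\bv)=0$, the trilinear term drops out of $\langle\mathcal{A}\bv,\bv\rangle$, so that $\langle\mathcal{A}\bv,\bv\rangle \ge \|\bv\|^2 + \nu k\|\nabla\bv\|^2$, giving coercivity on $\WW^{1,2}_\per$ uniformly in the data. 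The map $\mathcal{A}$ is continuous and bounded, and because $\WW^{1,2}_\per\hookrightarrow\LL^4$ is compact in two dimensions, $\mathcal{A}$ is a compact perturbation of a coercive, monotone operator (indeed it is a pseudomonotone operator). Hence by the Brouwer fixed-point argument on finite-dimensional Galerkin subspaces, followed by passing to the limit using the a priori bound from coercivity and the compact embedding to handle the nonlinear term, one obtains a solution $\tbuel\in\WW^{1,2}_\per$; setting $\tpreel = -\varepsilon^{-1}\dv\tbuel\in L^2_\per$ then solves \eqref{eq:numerical scheme penalty vf1}--\eqref{eq:numerical scheme penalty vf2}. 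The projection step \eqref{eq:numerical scheme projection 1}--\eqref{eq:numerical scheme projection 2} is linear: $\fiel$ is obtained from Step 2 of the scheme (a periodic Poisson problem, solvable since $\dv\tbuel$ has vanishing mean), then $\bul$ and $\preel$ are defined explicitly, so no further existence issue arises there.

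For the measurability claim I would argue as follows. Everything in the construction is a deterministic, continuous (indeed Borel) operation applied to the data triple $(\bull,\fiill,\Delta_\ell\bW)$: the Galerkin approximants depend continuously on the data (each is a solution of a polynomial system with a unique branch selectable by continuity from the coercive estimate), the weak limit is then a Borel-measurable function of the data, and the subsequent linear solves preserve measurability. Since $\bull$ and $\fiill$ are $\cF_{t_{\ell-1}}$-measurable by the induction hypothesis and $\Delta_\ell\bW$ is $\cF_{t_\ell}$-measurable, the composition $\tbuel$, hence $\bul$, $\fiel$, $\preel$, is $\cF_{t_\ell}$-measurable. To make the measurable-selection point rigorous without hand-waving about uniqueness of branches, the cleanest route is to invoke a measurable selection theorem (Kuratowski--Ryll-Nardzewski, or the version for solution maps of pseudomonotone operators) applied to the set-valued map sending the data to the nonempty closed set of solutions of the finite-dimensional Galerkin system, then pass to the limit along a measurably chosen subsequence; alternatively, one may perturb slightly to obtain strict monotonicity, get genuine uniqueness and continuity of the solution map, and recover the general case by a limiting argument.

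\emph{Main obstacle.} The existence part is essentially classical; the delicate point is the measurability/selection argument, since the nonlinear system need not have a unique solution, so one cannot simply say "the solution is a continuous function of the data." Handling this — either via a measurable selection theorem at the Galerkin level or via a uniqueness-by-perturbation argument — is where the real care is needed.
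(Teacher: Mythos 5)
Your proposal is correct and follows the same skeleton as the paper's proof: fix the data pathwise, obtain coercivity of the nonlinear variational problem from the orthogonality property \eqref{eq:orthogonal property 1} of $\tb$, solve the (linear, elliptic) Poisson problem for the projection step, and then treat measurability separately. The differences are in the execution, and in both places your version is the more careful one. For existence, the paper defines the nonlinear operator on $\VV$ (dropping the penalization term from the displayed variational form) and invokes ``the Lax--Milgram theorem'' to get a unique solution; strictly speaking Lax--Milgram applies to bilinear forms, and the intermediate iterate $\tbuel$ is not divergence-free, so your reduction to a single equation on $\WW^{1,2}_\per$ with the grad-div term $\tfrac{k}{\varepsilon}(\dv\tbuel,\dv\bfi)$, solved by Galerkin--Brouwer plus compactness/pseudomonotonicity, is the rigorous route that the paper's one-line argument implicitly relies on. (Your reading $\tpreel=-\varepsilon^{-1}\dv\tbuel$ from \eqref{eq:numerical scheme penalty vf2} is exactly right and is also used later in the paper, cf.~\eqref{eq:pseudo-compressible constraint for z}.) For measurability, the paper simply cites de Bouard--Debussche and Ba\v{n}as et al.; the measurable-selection argument you sketch (Kuratowski--Ryll-Nardzewski applied to the solution sets of the finite-dimensional Galerkin systems, then passage to the limit) is precisely the content of those references, so you have in effect unpacked the citation. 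The only soft spot in your write-up is the parenthetical claim that the Galerkin approximants have ``a unique branch selectable by continuity'' --- uniqueness is not available for the nonlinear system --- but you correctly flag this yourself and fall back on the selection theorem, which is the right fix. In short: same approach, more honest about the two points (nonlinear solvability and selection) where the paper is terse.
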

	\begin{proof}
		Let us fix $\omega\in\Omega$. We use Lax-Milgram fixed-point theorem to show the existence of a $\VV$-valued sequence $\{\buel:1\leq\ell\leq M  \}$.
		
		$\bullet$ Penalization:
		Since $\bu^{\varepsilon,0}$ and $\phi^0$ are given, and $\lvert\Delta_\ell \bW(\omega)\rvert_\mK<\infty$  for all  $\ell\in\{1,\ldots,M\}$, we  assume that $\tbu^{\varepsilon,1}(\omega),\ldots,\tbu^{\varepsilon,\ell-1}(\omega)$ are also given. To find  the pair of random variables $(\buel,\preel)$ in
		\cref{algo:penalty method} we need first to solve  a nonlinear, nonsymmetric variational problem. Therefore, let us  denote by $\sA$ the nonlinear operator from $\VV$ to $\VV'$ ($\VV'$: dual of $\VV$) defined by:
		\begin{equation}\label{eq:algo main variational form}
		\begin{split}
		\langle\sA\tbuel(\omega),\bw(\omega)\rangle=\tbuel(\omega)&+\nu (\nabla\tbuel(\omega),\nabla\bw^\ell(\omega)) \\&+ \tb(\tbuel(\omega),\tbuel(\omega),\bw^\ell(\omega)),\qquad\forall\;\bw^\ell((\omega))\in\VV.
		\end{split}
		\end{equation}
		Because $\tb$ satisfies the orthogonal property \eqref{eq:orthogonal property 1}, then putting $\bw^\ell=\tbuel$ in \eqref{eq:algo main variational form} we thus have
		\[
		\langle\sA\tbuel(\omega),\bw(\omega)\rangle\geq\nu\lVert \buel(\omega)\rVert_\VV^2.
		\]
		The operator $\sA$ is therefore $\VV$-elliptic and the Lax--Milgram theorem allows us to infer the existence of a unique solution of \eqref{eq:algo main variational form}. 
		
		$\bullet$ Projection: If we take $\bfi= \nabla \fiel$ in \eqref{eq:numerical scheme projection 1} we see that this step is actually a Poisson problem. Since $\buel$ is given from the previous step, the existence of a unique solution $\fiel$ is deduced from the ellipticity of the Laplacian operator.
		
		Since $\buel =\tbuel- \alpha k \nabla(\fiel - \phi^ {\varepsilon,\ell -1})$ and
		$\preel=\tpreel + \fiel + \alpha(\fiel - \fiell)$, the existence of a unique $\buel$ and  $\preel$ follows.
		
		The proof of the $\cF_{t_\ell}$-measurability of $\tbuel$ and $\fiel$ can be done in a similar fashion as in \cite{debouard2004a}, see also \cite{banas2014a}. Since $\buel$ (resp. $\preel$) are obtained from $\tbuel$ (resp. $\fiel$) we also obtain their $\cF_{t_\ell}$-measurability.
	\end{proof}
	
	\subsection{Stability}\label{subsec:stability}
	This section is inspired by \cite[Lemma 3.1]{brzezniak2013finite} and \cite[Lemma 3.1]{carelli2012rates}. Here we consider a coupled system, the first one is derived from the penalization and the second one is a projection step.
	
	\medskip

	\begin{lem}
		\label{lem:stability}Let $\phi^{\varepsilon,0} = 0$.
		Suppose that \cref{S1,S4} are valid with $\lVert \bu^0\rVert\leq C$. Then, there exists a positive constant $C=C(L,T, \bu^0,\nu)$ so that for every $\varepsilon>0$ and $\alpha>1$, the iterates $\{\buel:1\leq\ell\leq M\}$ solving \cref{algo:penalty method}  and the intermediary iterates $\{ \tbuel:1\leq\ell\leq M\}$, $\{\tpreel:1\leq\ell\leq M\}$, and $\{ \fiel:1\leq\ell\leq M\}$ satisfy for $q= 1$ or $q=2$:
		\begin{itemize}
			\item[$(i)$] $\displaystyle\nu \EE\left(k\sum_{\ell = 1}^{M}\lVert\nabla\tbuel\rVert^2\lVert\buel\rVert^{2^q-2}\right)\leq C $,
			\item[$(ii)$]  $\displaystyle k^2 \EE\max_{1\leq m\leq M}\lVert \nabla\fiem\rVert^2\lVert \buem\rVert^{2^q-2}
			+ {\varepsilon}\EE\left(k\sum_{\ell = 1}^{M}\lVert\tpreel\rVert \lVert\buel\rVert^{2^q-2}\right)\leq C$,
			\item[$(iii)$] $\displaystyle\EE\max_{1\leq m\leq M}\lVert\buem\rVert^{2^q}+\nu \EE\left(k\sum_{\ell = 1}^{M}\lVert\nabla\buel\rVert^2\lVert\buel\rVert^{2^q-2}\right) \leq C$.
		\end{itemize}
		
	\end{lem}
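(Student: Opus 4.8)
\emph{Strategy.} Everything will come from a single per-step energy identity, together with summation, the Burkholder--Davis--Gundy inequality \eqref{eq:BHG inequality}, and elementary moment bookkeeping. No Gr\"onwall argument is needed: testing the momentum step \eqref{eq:numerical scheme penalty vf1} with the update $\tbuel$ itself makes the trilinear term vanish by \eqref{eq:orthogonal property 1}, so the Ornstein--Uhlenbeck splitting plays no role at this stage.

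\emph{One-step identity.} First I would take $\bfi=2\tbuel$ in \eqref{eq:numerical scheme penalty vf1} and discard $\tb(\tbuel,\tbuel,\tbuel)=0$. Testing \eqref{eq:numerical scheme penalty vf2} with $\chi=\tpreel$ turns the pressure contribution into the favourable $+2\varepsilon k\|\tpreel\|^2$. From the projection equations \eqref{eq:numerical scheme projection 1}--\eqref{eq:numerical scheme projection 2} one gets $\buel=\tbuel-\alpha k\nabla(\fiel-\fiell)=\bP_\HH\tbuel$ and, since $\dv\buel=0$ under periodic boundary conditions, $(\nabla g,\buel)=0$ for every $D$-periodic mean-zero $g$; in particular $\|\tbuel\|^2=\|\buel\|^2+\alpha^2k^2\|\nabla(\fiel-\fiell)\|^2$, and by polarisation $2k(\nabla\fiell,\tbuel)=\alpha k^2\bigl(\|\nabla\fiel\|^2-\|\nabla\fiell\|^2-\|\nabla(\fiel-\fiell)\|^2\bigr)$. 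Collecting terms, the two $\|\nabla(\fiel-\fiell)\|^2$ contributions combining with coefficient $\alpha(\alpha-1)k^2\ge0$ precisely because $\alpha>1$, I expect the identity
\begin{equation*}
\begin{split}
\|\buel\|^2-\|\buell\|^2 &+\|\tbuel-\buell\|^2+\alpha(\alpha-1)k^2\|\nabla(\fiel-\fiell)\|^2 \\
&+2\nu k\|\nabla\tbuel\|^2+2\varepsilon k\|\tpreel\|^2+\alpha k^2\bigl(\|\nabla\fiel\|^2-\|\nabla\fiell\|^2\bigr)=2(\Delta_\ell\bW,\tbuel).
\end{split}
\end{equation*}

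\emph{Case $q=1$.} I would split $2(\Delta_\ell\bW,\tbuel)=2(\Delta_\ell\bW,\buell)+2(\Delta_\ell\bW,\tbuel-\buell)$ and absorb the second term by Young's inequality against half of $\|\tbuel-\buell\|^2$ (leaving $2\|\Delta_\ell\bW\|^2$). Summing over $\ell=1,\dots,m$ makes the $\|\buel\|^2$- and $\|\nabla\fiel\|^2$-contributions telescope (recall $\phi^{\varepsilon,0}=0$), so pathwise $\|\buem\|^2$ plus the nonnegative dissipation sum is dominated by $\|\bu^0\|^2+\mathcal M_m+\mathcal N_m$, with $\mathcal M_m:=2\sum_{\ell\le m}(\Delta_\ell\bW,\buell)$ and $\mathcal N_m:=2\sum_{\ell\le M}\|\Delta_\ell\bW\|^2$. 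Because $\buell$ is $\cF_{t_{\ell-1}}$-measurable, $\mathcal M_m$ is a martingale (a stochastic integral of the step process equal to $\buell$ on $(t_{\ell-1},t_\ell]$), and \eqref{eq:BHG inequality} bounds $\EE\max_{m\le M}|\mathcal M_m|$ by $C(\trace\bQ)^{1/2}\bigl(k\sum_\ell\EE\|\buell\|^2\bigr)^{1/2}\le C(\trace\bQ\,T)^{1/2}\bigl(\EE\max_\ell\|\buel\|^2\bigr)^{1/2}$, which Young's inequality then absorbs; also $\EE\mathcal N_M\le CT\trace\bQ$ by \eqref{eq:increment of a wiener process}. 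Taking $\max_{m\le M}$ and expectations yields (iii) for $q=1$, and reinserting this bound gives (i) and (ii) for $q=1$ once one notes $\varepsilon\|\tpreel\|\le\sqrt2\,\|\nabla\tbuel\|$ (from \eqref{eq:numerical scheme penalty vf2} with $\chi=\tpreel$) and $\|\nabla\buel\|\le C\|\nabla\tbuel\|$ (from $\buel=\tbuel-\alpha k\nabla(\fiel-\fiell)$, the Poisson problem solved by $\fiel-\fiell$ in Step~2 of \cref{algo:penalty method}, and elliptic regularity on the torus).

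\emph{Case $q=2$ and the main obstacle.} Rather than running a second weighted energy estimate, I would simply square: from the summed inequality, each of $\|\buem\|^2$, $k^2\|\nabla\fiem\|^2$ and $\nu k\sum_{\ell\le m}\|\nabla\tbuel\|^2$ is pathwise dominated by the nonnegative quantity $\|\bu^0\|^2+\mathcal M_m+\mathcal N_m$, whose square is at most $3\|\bu^0\|^4+3\mathcal M_m^2+3\mathcal N_m^2$. Here $\EE\mathcal M_M^2\lesssim k\,\trace\bQ\sum_\ell\EE\|\buell\|^2\le C$ by the $q=1$ bound and orthogonality of martingale increments, hence $\EE\max_m\mathcal M_m^2\le C$ by Doob, while $\mathcal N_M$ has all moments bounded uniformly in $M$ by \eqref{eq:increment of a wiener process}. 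This gives $\EE\max_m\|\buem\|^4$, $\EE\max_m k^4\|\nabla\fiem\|^4$ and $\EE\bigl(k\sum_\ell\|\nabla\tbuel\|^2\bigr)^2$ all bounded, from which the weighted sums in (i)--(iii) follow by Cauchy--Schwarz in $\ell$ together with H\"older in $\omega$; e.g.\ $k\sum_\ell\|\nabla\tbuel\|^2\|\buel\|^2\le\bigl(\max_\ell\|\buel\|^2\bigr)\bigl(k\sum_\ell\|\nabla\tbuel\|^2\bigr)$, and likewise for the $\fiem$- and $\tpreel$-weighted terms. The delicate part throughout is the algebra of the two coupled steps --- seeing that the $\nabla\fiel$-terms telescope, that the residual $-\alpha k^2\|\nabla(\fiel-\fiell)\|^2$ is exactly offset by the $+\alpha^2k^2\|\nabla(\fiel-\fiell)\|^2$ generated by the projection when $\alpha>1$, and that the compressibility term carries the right sign --- together with isolating the $\cF_{t_{\ell-1}}$-adapted part $2(\Delta_\ell\bW,\buell)$ of the stochastic term so that Burkholder--Davis--Gundy and Doob can replace the Gr\"onwall step the nonlinearity would otherwise force (and which is unavailable anyway, since $\bue\notin\VV$ rules out testing with $\bA\tbuel$).
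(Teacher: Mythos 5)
Your proposal is correct, and for the preparatory identity and the case $q=1$ it follows essentially the same path as the paper: test \eqref{eq:numerical scheme penalty vf1} with $2\tbuel$ so the trilinear term drops by \eqref{eq:orthogonal property 1}, extract $2\varepsilon k\lVert\tpreel\rVert^2$ from \eqref{eq:numerical scheme penalty vf2}, combine with the projection step so that the $\lVert\nabla\fiel\rVert^2$-terms telescope and the residual $\lVert\nabla(\fiel-\fiell)\rVert^2$-terms carry the coefficient $\alpha(\alpha-1)k^2\ge 0$, then split the noise into an adapted martingale part (handled by BDG/Doob) and an increment part absorbed by Young. Your algebra for the projection step (Helmholtz orthogonality plus polarisation) is equivalent to the paper's three test-function identities, and your pairing of $2(\Delta_\ell\bW,\tbuel-\buell)$ against $\lVert\tbuel-\buell\rVert^2$ is in fact the internally consistent version of what the paper writes. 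Where you genuinely diverge is the case $q=2$: the paper multiplies the one-step inequality by $2\lVert\buel\rVert^2$ and re-runs a weighted discrete energy estimate (using the identity \eqref{eq:algebraic identity} once more to produce the telescoping $\lVert\buel\rVert^4-\lVert\buell\rVert^4$ and re-estimating the weighted noise term $\sum_\ell(\Delta_\ell\bW,\tbuell)\lVert\buel\rVert^2$), whereas you square the already-summed first-level inequality and invoke Doob's $L^2$ maximal inequality together with Cauchy--Schwarz in $\ell$ and H\"older in $\omega$. Both routes deliver (i)--(iii); yours is more elementary and has the advantage of sidestepping the adaptedness subtlety in the paper's weighted noise term (the weight $\lVert\buel\rVert^2$ is only $\cF_{t_\ell}$-measurable, so that sum is not a martingale and needs an extra decomposition that the paper leaves implicit), while the paper's weighted estimate produces, as by-products, genuinely weighted control of quantities such as $\sum_\ell\lVert\tbuel-\buel\rVert^2\lVert\buel\rVert^2$ that your Cauchy--Schwarz argument only recovers up to a product of square roots --- which is, however, all that the lemma as stated requires.
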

	\begin{proof}
		The proof consits of three steps. First, we give some preparatory estimates.
		Then, we handle the case $q=1$, and, fially, we handle the case $q=2$.
		\begin{steps}
			\item 	{\bf Preparatory estimate.}				
			We take $\bfi= 2\tbuel$ in \cref{eq:numerical scheme penalty vf1} and $\chi=\dv\tbuel$ in \cref{eq:numerical scheme penalty vf2}, and use the orthogonal property~\eqref{eq:orthogonal property 1} of $\tb$, to get
			\begin{align}
			\label{eq:numerical scheme penalty vf1 1}
			\begin{split}
			(\tbuel - \buell,\tbuel)+ 2\nu k\lVert\nabla\tbuel\rVert^2
			+2k(\nabla \tpreel,\buel)
			\\
			+2k(\nabla \phi^{\varepsilon,\ell-1},\tbuel)&= 2(\Delta_\ell\bW,\tbuel),
			\end{split}	
			\\\notag
			(\nabla\tpreel,\tbuel)&=\frac{1}{\varepsilon}\lVert \dv\tbuel\rVert^2.
			\end{align}
			Using the algebraic identity
			\begin{equation}
			\label{eq:algebraic identity}
			2(a-b)a = a^2 -b^2 + (a-b)^2
			\end{equation}
			in \eqref{eq:numerical scheme penalty vf1 1} we obtain
			\begin{equation}
			\label{eq:penalty stability}
			\begin{split}
			\lVert\tbuel\rVert^2-\lVert\buell\rVert^2 + \lVert\tbuel-\tbuell \rVert^2
			&+ 2 \nu k\lVert\nabla\tbuel\rVert^2 + 2{\varepsilon}k\lVert \tpreel\rVert^2
			\\
			&+ 2k(\nabla\fiell,\tbuel)
			=  2(\Delta_\ell\bW,\tbuel).
			\end{split}
			\end{equation}
			Let $\alpha>0$. We take $\bfi = 2\tbuel$ in \eqref{eq:numerical scheme projection 1} and obtain
			\begin{equation}
			\label{eq:penalty stability5}
			\frac{\alpha-1}{\alpha} \left(\lVert \buel\rVert^2-\lVert \tbuel\rVert^2 + \lVert \buel -\tbuel\rVert^2\right) = 0.
			\end{equation}
			Then, we take $\bfi = \buel + \tbuel$ in \eqref{eq:numerical scheme projection 1} and obtain
			\begin{equation}
			\label{eq:penalty stability6}
			\frac{1}{\alpha}\left(\lVert \buel\rVert^2-\lVert \tbuel\rVert^2\right) + \frac{k}{2}(\nabla(\fiel -\fiell),\tbuel) =0.
			\end{equation}
			Collecting together  \eqref{eq:penalty stability} to \eqref{eq:penalty stability6}  we obtain
			\begin{equation}
			\label{eq:penalty stability7}
			\begin{split}
			\lVert\buel\rVert^2-\lVert\buell\rVert^2 &+ \lVert\tbuel-\tbuell \rVert^2+\frac{\alpha-1}{\alpha} \lVert \buel -\tbuel\rVert^2+ 2 \nu k\lVert\nabla\tbuel\rVert^2
			\\
			&+ 2{\varepsilon}k\lVert \tpreel\rVert^2 + k(\nabla(\fiell+\fiel),\tbuel)
			\leq  2(\Delta_\ell\bW,\tbuel).
			\end{split}
			\end{equation}
			We take $\bfi = \nabla(\fiel + \fiell)$ in \eqref{eq:numerical scheme projection 1} and obtain
			\begin{equation*}
			(\nabla(\fiel +\fiell),\tbuel) = \alpha k\lVert \nabla\fiel\rVert^2-\alpha k\lVert \nabla\fiell\rVert^2.
			\end{equation*}
			This implies,
			\begin{equation}
			\label{eq:penalty stability9}
			\begin{split}
			\lVert\buel\rVert^2&-\lVert\buell\rVert^2 + \lVert\tbuel-\tbuell \rVert^2+\frac{\alpha-1}{\alpha} \lVert \buel -\tbuel\rVert^2+ 2 \nu k\lVert\nabla\tbuel\rVert^2
			\\
			&+ 2{\varepsilon}k\lVert \tpreel\rVert^2 + \alpha k^2\lVert \nabla\fiel\rVert^2-\alpha k^2\lVert \nabla\fiell\rVert^2
			\leq  2(\Delta_\ell\bW,\tbuel).
			\end{split}
			\end{equation}
			
			\item	{\textbf{Case $q=1$.}} Summing \eqref{eq:penalty stability9} from $\ell = 1$ to $\ell= m$, we get
			\begin{equation}
			\label{eq:penalty stability 2}
			\begin{split}
			\lVert \buem\rVert^2 &+\sum_{\ell = 1}^{m} \lVert\tbuel-\tbuell \rVert^2+(\tfrac{\alpha-1}{\alpha} )\sum_{\ell = 1}^{m}\lVert \buel -\tbuel\rVert^2+ 2 \nu k\sum_{\ell = 1}^{M}\lVert\nabla\tbuel\rVert^2
			\\
			& + 2{\varepsilon}k\sum_{\ell = 1}^{m}\lVert \tpreel\rVert^2+\alpha k^2\lVert \nabla\fiem\rVert^2
			\leq \lVert \bu^0\rVert^2 +  2\sum_{\ell = 1}^{m}(\Delta_\ell\bW,\tbuel).
			\end{split}
			\end{equation}
			The last term of the right side can be splitted as follows,
			\begin{align}
			\notag
			\noise^{\varepsilon,m}_1:&=2\sum_{\ell= 1}^{m}(\Delta_\ell\bW,\tbuel)=2\sum_{\ell= 1}^{m}(\Delta_\ell\bW,\tbuel-\tbuell)+2\sum_{\ell= 1}^{m}(\Delta_\ell\bW,\tbuell).
			\intertext{Let $ \dun>0$ be an arbitrary number. Applying the  Young inequality to the  first term on the right side, we get}
			\label{eq:penalty stability 4}
			\ldots&\leq C(\dun) \sum_{\ell= 1}^{m}\lVert \Delta_\ell\bW\rVert^2+\dun\sum_{\ell= 1}^{m}\lVert\tbuel-\tbuell\rVert^2+2\sum_{\ell= 1}^{m}(\Delta_\ell\bW,\tbuell).
			\end{align}
			Taking first the maximum of \eqref{eq:penalty stability 4}  over $1\leq m\leq M$, and then  the expectation, exactly with this order, give the following estimate
			\begin{equation}
			\label{eq:penalty stability 5}
			\begin{split}
			\EE\max_{1\leq m\leq M}\noise^{\varepsilon,m}_1\leq C(\dun) \sum_{\ell= 1}^{M}\EE\lVert \Delta_\ell\bW\rVert^2&+\dun\sum_{\ell= 1}^{M}\EE\lVert\tbuel-\tbuell\rVert^2
			\\
			&+2\EE\max_{1\leq m\leq M}\sum_{\ell= 1}^{m}(\Delta_\ell\bW,\tbuell).
			\end{split}
			\end{equation}
			It follows from \eqref{eq:increment of a wiener process}, that $\EE\lVert \Delta_\ell\bW\rVert^2=k$. By applying successively the Burkholder--David--Gundy inequality, the H\"older inequality, and the Young inequality to the last term of \eqref{eq:penalty stability 5}, we obtain
			\begin{align*}
			\EE\max_{1\leq m\leq M}\noise^{\varepsilon,m}_1&\leq C(\dun, T)+\dun\sum_{\ell= 1}^{M}\EE\lVert\buel-\buell\rVert^2+\EE\left(\sum_{\ell= 1}^{M}k\lVert\buell\rVert^2\right)^{1/2}
			\\
			&\leq C({\dun,T ,\bu^0})+\dun\sum_{\ell= 1}^{M}\EE\lVert\buel-\buell\rVert^2+\dun \EE\max_{1\leq \ell\leq M }\lVert\buel\rVert^2.
			\end{align*}
			Now, taking the maximum of \eqref{eq:penalty stability 2} over $1\leq m\leq M$, and, then,  expectation,  give the following estimate,
			\begin{equation}
			\label{eq:penalty stability 7}
			\begin{split}
			&\EE\max_{1\leq m\leq M}\left\{\lVert \buem\rVert^2 + \alpha k^2 \lVert \nabla\fiem\rVert^2\right\}
			\\
			&+\EE\sum_{\ell = 1}^{m} \lVert\tbuel-\tbuell \rVert^2+(\tfrac{\alpha-1}{\alpha})\EE \sum_{\ell = 1}^{m}\lVert \buel -\tbuel\rVert^2
			\\
			&+ \nu \EE \left(k\sum_{\ell = 1}^{M}\lVert\nabla\tbuel\rVert^2\right) + {\varepsilon}\EE \left(k\sum_{\ell = 1}^{m}\lVert \tpreel\rVert^2\right)
			\leq \lVert \bu^0\rVert^2 +\EE\max_{1\leq m\leq M}\noise^{\varepsilon,m}_1.
			\end{split}
			\end{equation}
			The terms with $\lVert\tbuel-\tbuell\rVert^2$ and $\max_{1\leq \ell\leq M }\lVert\tbuel\rVert^2$ of \eqref{eq:penalty stability 5} are absorbed by the left hand side of \eqref{eq:penalty stability 7} which leads to
			\begin{equation}
			\label{eq:penalty stability 8}
			\begin{split}
			&(1-\dun)\EE\max_{1\leq m\leq M}\lVert \buem\rVert^2 + \alpha k^2 \EE\max_{1\leq m\leq M}\lVert \nabla\fiem\rVert^2
			\\[5pt]
			&+(1-\dun)\EE\sum_{\ell = 1}^{m} \lVert\tbuel-\tbuell \rVert^2+(\tfrac{\alpha-1}{\alpha})\EE \sum_{\ell = 1}^{m}\lVert \buel -\tbuel\rVert^2
			\\[3pt]
			&+ \nu \EE \left(k\sum_{\ell = 1}^{M}\lVert\nabla\tbuel\rVert^2\right) +{\varepsilon}\EE \left(k\sum_{\ell = 1}^{m}\lVert \tpreel\rVert^2\right)
			\leq C(\dun, T, \bu^0).
			\end{split}
			\end{equation}
			The parameters $\alpha$ and $\dun$  are chosen such that the left hand side stays positive. Thus, we chose  $\alpha>1$ and $0<\dun<1$.
			
			\item	{\textbf{Case $q=2$.}}
			We multiply \eqref{eq:penalty stability9} by $2\lVert \buel\rVert^2$ and use again the algebraic identity \eqref{eq:algebraic identity} to give
			\begin{equation}
			\label{eq:penalty stability2}
			\begin{split}
			&\lVert\buel\rVert^4-\lVert\buell\rVert^4
			+2\lVert\tbuel-\tbuell\rVert^2\lVert\buel\rVert^2
			\\[5pt]
			&+\tfrac{\alpha-1}{\alpha} \lVert \buel -\tbuel\rVert^2\lVert \buel\rVert^2+ 4\nu k\lVert\nabla\tbuel\rVert^2\lVert\buel\rVert^2
			+ 4{\varepsilon}k\lVert\tpreel\rVert^2 \lVert\buel\rVert^2
			\\[5pt]
			&+2\alpha k^2\lVert \nabla\fiel\rVert^2\lVert \buel\rVert^2-2\alpha k^2\lVert \nabla\fiell\rVert^2\lVert \buel\rVert^2
			=  2(\Delta_\ell\bW,\tbuel)\lVert\buel\rVert^2.
			\end{split}
			\end{equation}
			On the left hand side we use the same calculation that we used on the term  $\noise^{\varepsilon,m}_1$. In particular,
			we compute
			\begin{align}
			\notag
			\noise^{\varepsilon,m}_2 &:= 2\sum_{\ell= 1}^{m}(\Delta_\ell\bW,\tbuel)\lVert \buel\rVert^2
			\\
			\label{eq:penalty stability 6}
			&\leq C(\dun) \sum_{\ell= 1}^{m}\lVert \Delta_\ell\bW\rVert^2+\dun\sum_{\ell= 1}^{m}\lVert\tbuel-\tbuell\rVert^2\lVert \buel\rVert^2
			\\\notag
			&\qquad\qquad\qquad\qquad\quad+2\sum_{\ell= 1}^{m}(\Delta_\ell\bW,\tbuell)\lVert \buel\rVert^2.
			\end{align}
			In the next step, we first take  the maximum of \eqref{eq:penalty stability 6}  over $1\leq m\leq M$, and, then, we take the  expectation, exactly with this order. Now, applying  the Young inequality, and the H\"older inequality, and using \eqref{eq:penalty stability 8} to bound some terms, we can find a constant $C=C(\dun,\ddeu,L,T, \bu^0,\nu)>0$ such that
			\begin{equation}
			\label{eq:penalty stability 10}
			\begin{split}
			\EE\max_{1\leq m\leq M}\noise^{\varepsilon,m}_2
			\leq C&+\dun\sum_{\ell= 1}^{M}\lVert\tbuel-\tbuell\rVert^2\lVert \buel\rVert^2+\ddeu\EE\max_{1\leq \ell \leq M}\lVert \buel\rVert^4.
			\end{split}
			\end{equation}
			Summing up \eqref{eq:penalty stability 10} for $\ell=1$ to $\ell=m$, taking the maximum over $1\leq m\leq M$, and taking the expectation in \eqref{eq:penalty stability2}	 we have
			\begin{equation}
			\label{eq:penalty stability3}
			\begin{split}
			&\EE\max_{1\leq m\leq M}\left\{\lVert\buem\rVert^4+\alpha k^2 \lVert \nabla\fiel\rVert^2\lVert \buel\rVert^2\right\}
			+\EE\sum_{\ell = 1}^{M}\lVert\tbuel-\tbuell\rVert^2\lVert\buel\rVert^2
			\\
			&+(\tfrac{\alpha-1}{\alpha}) \EE\sum_{\ell = 1}^{M}\lVert \buel -\tbuel\rVert^2\lVert \buel\rVert^2
			+ \nu \EE\left(k\sum_{\ell = 1}^{M}\lVert\nabla\tbuel\rVert^2\lVert\buel\rVert^2\right)
			\\
			&\hspace{0pt}+ {\varepsilon}\EE\left(k\sum_{\ell = 1}^{M}\lVert\tpreel\rVert^2 \lVert\buel\rVert^2\right)
			\\
			&\leq C(\dun,\ddeu,L,T, \bu^0,\nu)+\dun\sum_{\ell= 1}^{M}\lVert\tbuel-\tbuell\rVert^2\lVert \buel\rVert^2+\ddeu\EE\max_{1\leq \ell \leq M}\lVert \buel\rVert^4.
			\end{split}
			\end{equation}
			The terms with $\lVert \buel\rVert^4$ and $\lVert\tbuel-\tbuell\rVert^2\lVert \buel\rVert^2$ are absorbed by the left side of \eqref{eq:penalty stability3}. Therefore, we get
			\begin{equation*}
			\begin{split}
			&\EE\max_{1\leq m\leq M}\left\{(1-\ddeu)\lVert\buem\rVert^4+\alpha k^2 \lVert \nabla\fiem\rVert^2\lVert \buem\rVert^2\right\}
			\\
			&+(1-\dun)\EE\sum_{\ell = 1}^{M}\lVert\tbuel-\tbuell\rVert^2\lVert\buel\rVert^2
			+(\tfrac{\alpha-1}{\alpha}) \EE\sum_{\ell = 1}^{M}\lVert \buel -\tbuel\rVert^2\lVert \buel\rVert^2\\
			&+ \nu \EE\left(k\sum_{\ell = 1}^{M}\lVert\nabla\tbuel\rVert^2\lVert\buel\rVert^2\right)
			+{\varepsilon}\EE\left(k\sum_{\ell = 1}^{M}\lVert\tpreel\rVert^2 \lVert\buel\rVert^2\right)
			\leq C(\dun,\ddeu,L,T, \bu^0,\nu).
			\end{split}
			\end{equation*}
			We conclude by choosing $\alpha,\dun,$ and $\ddeu$ such that, $(\alpha-1) >0$, $(1-\dun)>0$, and $(1-\ddeu)>0$.
		\end{steps}

	\end{proof}
	
	In the next lemma we use the LBB inequality (see \cite{babuska1973the,brezzi1974on})
	\begin{equation}
	\label{eq:LBB condition}
	\lVert \pre\rVert\leq C\sup_{\bfi\in\WW^{1,2}}\frac{(\nabla\pre,\bfi)}{\lVert \bfi\rVert_1}
	\end{equation}
	to transfer the estimate from the velocity fields $\buel$ to the pressure fields $\preel$.
	
	We start with a direct discretization of \eqref{eq:NSE1} which leads to the following algorithm:
	
	\medskip
	
	\begin{algo}\label{algo:direct discretization}
		Assume $\bu^0:=\buo$ with $\lVert \bu^0\rVert\leq C$. Find for every $\ell\in\{1,\ldots,M\}$ a pair of random variables $(\bu^\ell,\pre^\ell)$ with values in $\VV\times L^2_\per$, such that $\PP$-a.s.
		\begin{align}
		\label{eq:direct discretization1}
		\begin{split}
		(\bu^{\ell} - \bu^{\ell-1},\bfi)+\nu k(\nabla\bu^{\ell},\nabla\bfi)+k\tb(\bu^\ell, \bu^\ell,\bfi)\\+k(\nabla \pre^{\ell},\bfi )&= (\Delta_\ell\bW,\bfi),\quad \forall\; \bfi\in \WW^{1,2}_\per,
		\end{split}\\
		\label{eq:direct discretization2}
		(\dv\bu^{\ell},\chi)&=0,\quad \forall\; \chi \in L^2_\per.
		\end{align}
	\end{algo}	
	
	We define the sequences of errors $\sel =\bul-\buel, \tsel =\bul-\tbuel $, and $\sql = \prel-\preel$. We subtract \eqref{eq:numerical scheme penalty vf1} and \eqref{eq:numerical scheme penalty vf2} from \eqref{eq:direct discretization1} and \eqref{eq:direct discretization2},
	and get
	\begin{align}
	\label{eq:perturbation analysis 1}
	\begin{split}
	(\sel - \sell,\bfi)&+\nu k(\nabla\tsel,\nabla\bfi)
	\\
	&+k(\nabla \sql,\bfi )=k\tb(\tbuell,\tbuel,\bfi)-k\tb(\bul,\bul,\bfi),\; \forall\; \bfi\in \WW^{1,2}_\per.
	\end{split}
	\end{align}
	\begin{lem}\label{lem:stability for pressure}
		Under the assumption of Lemma~\ref{lem:stability}, there exists a  constant $C=C(L,T,\buo)>0$ such that for every $\varepsilon>0$, the iterates $\{ \preel:1\leq\ell\leq M\}$ solving \cref{algo:penalty method} satisfies
		\begin{equation*}
		\EE\left( k\sum_{\ell = 1}^{M}\lVert \preel\rVert^2\right)\leq C.
		\end{equation*}
		
	\end{lem}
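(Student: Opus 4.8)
The plan is to run the Ne\v{c}as/LBB inequality \eqref{eq:LBB condition} for $\preel$ and to substitute into its right-hand side the discrete momentum balance. Since $(\nabla\preel,\bfi)=-(\preel,\dv\bfi)$ depends on $\bfi$ only through its gradient part, by the Helmholtz--Leray splitting it suffices to use test functions of the form $\bfi=\nabla q$ (with $q$ periodic of vanishing mean); the point of this restriction is that it annihilates exactly the two terms one cannot control otherwise, the noise increment and the velocity increment.

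First I would derive a single weak identity for $\nabla\preel$. Adding \eqref{eq:numerical scheme penalty vf1} to \eqref{eq:numerical scheme projection 1}, eliminating $\nabla(\fiel-\fiell)$ by means of \eqref{eq:numerical scheme projection 1} itself, and using $\preel=\tpreel+\fiel+\alpha(\fiel-\fiell)$, one obtains for every $\bfi\in\WW^{1,2}_\per$
\[
k(\nabla\preel,\bfi)=(\Delta_\ell\bW,\bfi)-(\buel-\buell,\bfi)+\tfrac{1}{\alpha}(\tbuel-\buel,\bfi)-\nu k(\nabla\tbuel,\nabla\bfi)-k\,\tb(\tbuel,\tbuel,\bfi).
\]
Taking $\bfi=\nabla q$: because $\mH=\VV$ the increment $\Delta_\ell\bW$ is solenoidal, so $(\Delta_\ell\bW,\nabla q)=0$; because the iterates are discretely divergence free (\eqref{eq:numerical scheme projection 2} and \cref{S4}), also $(\buel-\buell,\nabla q)=0$. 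The only surviving ``artifact'' is $\tfrac1{\alpha k}(\tbuel-\buel,\nabla q)$, which I would rewrite, through \eqref{eq:numerical scheme penalty vf2}, as $-\tfrac1{\alpha k}(\dv\tbuel,q)=\tfrac{\varepsilon}{\alpha k}(\tpreel,q)$, thereby relocating it onto the penalty defect.

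It then remains to estimate. The viscous term is bounded by $\nu\lVert\nabla\tbuel\rVert\,\lVert\nabla q\rVert_1$; the convective term, by \eqref{eq:estimate 2 of tb} (or \eqref{eq:estimate 3 of tb}), by $C\lVert\tbuel\rVert_1^2\,\lVert\nabla q\rVert_1$; the defect term by $|(\dv\tbuel,q)|\le C\lVert\dv\tbuel\rVert\,\lVert\nabla q\rVert_1$ together with $\lVert\dv\tbuel\rVert=\varepsilon\lVert\tpreel\rVert$. Dividing by $k\lVert\nabla q\rVert_1$, taking the supremum over $q$ and invoking \eqref{eq:LBB condition} gives a pointwise bound of the shape $\lVert\preel\rVert\le C\big(\tfrac{\varepsilon}{k}\lVert\tpreel\rVert+\nu\lVert\nabla\tbuel\rVert+\lVert\tbuel\rVert_1^2\big)$. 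Squaring, multiplying by $k$, summing over $\ell$ and taking $\EE$, the three contributions are absorbed respectively by the $\varepsilon$-weighted pressure bound $\varepsilon\,\EE\big(k\sum_\ell\lVert\tpreel\rVert^2\big)\le C$, by $\nu\,\EE\big(k\sum_\ell\lVert\nabla\tbuel\rVert^2\big)\le C$, and by the $q=2$ estimates of \cref{lem:stability}.

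The step I expect to be the crux is the artifact term. A careless Cauchy--Schwarz on $\tfrac1{\alpha k}(\tbuel-\buel,\nabla q)$, or bounding $\lVert\nabla(\fiel-\fiell)\rVert$ by $k^{-1}\lVert\tbuel-\buel\rVert$ against the merely $O(1)$ stability bound on $\sum_\ell\lVert\tbuel-\buel\rVert^2$, loses a factor $k^{-1}$ whose square is not compensated; one must keep $\dv\tbuel$ in the form $\varepsilon\tpreel$, so that the square of the resulting $\varepsilon k^{-1}$ is paid for by the $\varepsilon$-weighted bound (which carries precisely one spare power of $\varepsilon$) -- this is where the penalty scaling and the full strength of \cref{lem:stability}, not just the basic $\LL^2$ energy estimate, are needed. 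An alternative I would keep in reserve is to compare $\preel$ with the pressure $\prel$ of the divergence-free scheme \eqref{eq:direct discretization1}--\eqref{eq:direct discretization2}: there the LBB argument produces no artifact whatsoever, so one only needs a $\VV$-stability bound for \cref{algo:direct discretization} (available in 2D under periodic boundary conditions via the orthogonality of the convective term), and then $\sql=\prel-\preel$ is estimated from the perturbation identity \eqref{eq:perturbation analysis 1} tested against $\nabla q$, where the penalty correction re-enters only through $\lVert\nabla(\buel-\tbuel)\rVert\le C\lVert\dv\tbuel\rVert$, the factor $\alpha k$ now cancelling the $k^{-1}$ produced by elliptic regularity of the pressure-Poisson step.
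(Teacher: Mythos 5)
Your primary route has a genuine quantitative gap at exactly the step you flag as the crux, and your proposed fix does not close it. After testing with $\bfi=\nabla q$ the artifact term is, as you say, $\tfrac{1}{\alpha}(\tbuel-\buel,\nabla q)=\tfrac{\varepsilon}{\alpha}(\tpreel,q)$, so the LBB inequality yields $\lVert\preel\rVert\leq C\big(\tfrac{\varepsilon}{k}\lVert\tpreel\rVert+\cdots\big)$. Squaring, multiplying by $k$ and summing gives the contribution
\[
k\sum_{\ell=1}^{M}\frac{\varepsilon^{2}}{k^{2}}\lVert\tpreel\rVert^{2}
=\frac{\varepsilon}{k^{2}}\cdot\Big(\varepsilon\, k\sum_{\ell=1}^{M}\lVert\tpreel\rVert^{2}\Big)\leq C\,\frac{\varepsilon}{k^{2}},
\]
since the only available control on $\tpreel$ is the $\varepsilon$-weighted energy bound $\varepsilon\,\EE\big(k\sum_{\ell}\lVert\tpreel\rVert^{2}\big)\leq C$ from Lemma~\ref{lem:stability}. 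The ``one spare power of $\varepsilon$'' cancels one of the two powers in $\varepsilon^{2}$ but leaves $\varepsilon/k^{2}$, which is not uniformly bounded (in the regime of Theorem~\ref{thm:convergence in probability} one takes $\varepsilon=k^{\eta}$ with $\eta<1/2$, so $\varepsilon/k^{2}\to\infty$), and the lemma asserts a constant independent of both $\varepsilon$ and $k$. The alternative crude bounds ($\lVert\tbuel-\buel\rVert$ against the $O(1)$ stability sum, or $\lVert\dv\tbuel\rVert\leq\lVert\nabla\tbuel\rVert$) fare no better, producing $k^{-1}$ or $k^{-2}$ respectively; within this route there is simply no estimate strong enough to pay for the $k^{-1}$ sitting in front of the projection defect.

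The ``alternative kept in reserve'' is essentially the paper's actual proof: one compares with the divergence-free scheme of \cref{algo:direct discretization}, forms $\sql=\prel-\preel$ from the perturbation identity \eqref{eq:perturbation analysis 1}, runs LBB on $\sql$, and imports $\EE\big(k\sum_{\ell}\lVert\nabla\prel\rVert^{2}\big)\leq C$ from Carelli--Prohl. But as sketched your version is missing the ingredient that makes that comparison work: the increment term $(\sel-\sell,\bfi)$ in \eqref{eq:perturbation analysis 1} contributes $\lVert\sel-\sell\rVert_{-1}$ to $k\lVert\sql\rVert$, and one needs the negative-norm bound $\EE\sum_{\ell}\lVert\sel-\sell\rVert_{-1}^{2}\leq Ck$, obtained by testing \eqref{eq:perturbation analysis 1} with $\bA^{-1}(\sel-\sell)$ and using Proposition~\ref{prop:equivalence of norms} (note that the pressure gradient is annihilated by this solenoidal test function, which is why no artifact of the type above reappears). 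Your remark about ``the factor $\alpha k$ cancelling the $k^{-1}$'' does not address this term, which is the real work of the proof. So: right backup idea, but the decisive estimate is absent, and the primary argument as written is incorrect.
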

	\begin{proof}
		Since $(\sel -\sell)\in \cD(\bA^{-1})$, we can take $\bfi = \bA^{-1}(\sel -\sell)$ in
		\eqref{eq:perturbation analysis 1} and use Proposition~\ref{prop:equivalence of norms}.
		Then we apply the Young inequality, and use estimate \eqref{eq:estimate 3 of tb} of $\tb$. This  leads to the following results:
		\\[1pt]
		\begin{itemize}
			\item [$i)$]$c_2\lVert \sel - \sell \rVert_{-1}^2\leq (\sel - \sell,\bA^{-1}(\sel -\sell)),$
			\\[1pt]
			\item [$ii)$]$\nu k(\nabla\sel,\nabla\bA^{-1}(\sel -\sell))\leq  C(\dun)\nu^2 k^2\lVert\sel\rVert_1^2 + \dun\lVert \sel - \sell \rVert_{-1}^2,$
			\\[1pt]
			\item [$iii)$]$k(\nabla \sql,\bA^{-1}(\sel -\sell) ) = 0,$
			\\[1pt]
			\item [$iv)$]$k\tb(\tbuell,\tbuel,\bA^{-1}(\sel -\sell))\leq C(L,\dun)\frac{k^2}{2} \lVert \tbuell\rVert^2 \lVert \tbuell\rVert_1^2+C(L,\dun)\frac{k^2}{2} \lVert \tbuel\rVert^2 \lVert \tbuel\rVert_1^2+ \dun\lVert \sel -\sell\rVert^2_{-1},$
			\\[1pt]
			\item [$v)$]$k\tb(\bul,\bul,\bA^{-1}(\sel -\sell))\leq C(L,\dun)k^2 \lVert \bul\rVert^2 \lVert \bul\rVert_1^2 +\dun \lVert \sel -\sell\rVert^2_{-1}.$
			\\[1pt]
		\end{itemize}
		Fixing  $ \dun>0$ such that $(c_2-3\dun)>0$, and collecting  $i)$, $ii)$, $iii)$, $iv)$, and $v)$, we obtain
		\begin{align*}
		&(c_2-3\dun)\EE\sum_{\ell =1}^{M}\lVert \sel - \sell \rVert_{-1}^2\leq C(L,\dun,\nu) k\EE\left(k\sum_{\ell = 1}^{M}\lVert\buel\rVert_1^2\right) + k\EE\left(k\sum_{\ell = 1}^{M}\lVert\bul\rVert_1^2\right)
		\\
		& +k\EE\left(k \sum_{\ell =1}^{M}\lVert \tbuell\rVert^2 \lVert \tbuell\rVert_1^2\right)+ k\EE\left(k \sum_{\ell =1}^{M}\lVert \tbuel\rVert^2 \lVert \tbuel\rVert_1^2\right) + k\EE\left(k\sum_{\ell =1}^{M} \lVert \bul\rVert^2 \lVert \bul\rVert_1^2\right).
		\end{align*}
		By Lemma~\ref{lem:stability} and \cite[Lemma 3.1 (iii)]{brzezniak2013finite} we obtain
		\begin{equation}
		\label{eq:stability pressure4}
		\EE\sum_{\ell =1}^{M}\lVert \sel - \sell \rVert_{-1}^2\leq C(T,L,\bu^0)k.
		\end{equation}
		Now we rearrange \eqref{eq:perturbation analysis 1} and get
		\begin{align}\label{eq:stability pressure1}
		k(\nabla \sql,\bfi ) = -(\sel - \sell,\bfi) - \nu k(\nabla\tsel,\nabla\bfi) + k\tb(\tbuell,\tbuel,\bfi)-k\tb(\bul,\bul,\bfi).
		\end{align}
		With the skew symmetry property of $\tb$ (see \eqref{eq:skew-symmetry}) and the estimate \eqref{eq:estimate 3 of tb},
		identity  \eqref{eq:stability pressure1} becomes
		\begin{align*}
		\frac{k(\nabla \sql,\bfi )}{\lVert \bfi\rVert_1}\leq \lVert \sel - \sell\rVert_{-1} &+ \nu k\lVert \nabla\tsel\rVert + C(L)k\lVert \tbuell\rVert \lVert \tbuell\rVert_1
		\\
		&+ C(L)k\lVert \buel\rVert \lVert \buel\rVert_1 +C(L)k\lVert \bul\rVert \lVert \bul\rVert_1.
		\end{align*}
		Using the inequality \eqref{eq:LBB condition}, we have
		\begin{align*}
		k^2\lVert \sql\rVert^2 \leq C\lVert \sel - \sell\rVert_{-1}^2 &+ \nu^2 k^2\lVert \nabla\tsel\rVert^2 +C(L)k^2\lVert \tbuell\rVert^2 \lVert \tbuell\rVert_1^2
		\\
		&+ C(L)k^2\lVert \tbuel\rVert^2 \lVert \tbuel\rVert_1^2 +C(L)k^2\lVert \bul\rVert^2 \lVert \bul\rVert_1^2.
		\end{align*}
		Summing up for $\ell =1$ to $\ell = M$, and taking expectation, we obtain
		\begin{align*}
		k\EE\left(k\sum_{\ell =1}^{M}\lVert \sql\rVert^2\right) \leq &C\EE\sum_{\ell =1}^{M}\lVert \sel - \sell\rVert_{-1}^2 + \nu^2 k\EE\left(k\sum_{\ell =1}^{M}\lVert \nabla\tbuel\rVert^2\right)
		\\
		&+ \nu^2 k\EE\left(k\sum_{\ell =1}^{M}\lVert \nabla\bul\rVert^2\right) + C(L)k\EE\left(k\sum_{\ell =1}^{M}\lVert \tbuell\rVert^2 \lVert \tbuell\rVert_1^2\right) \\
		&+ C(L)k\EE\left(k\sum_{\ell =1}^{M}\lVert \tbuel\rVert^2 \lVert \tbuel\rVert_1^2\right) +C(L)k\EE\left(k\sum_{\ell =1}^{M}\lVert \bul\rVert^2 \lVert \bul\rVert_1^2\right).
		\end{align*}
		From Lemma~\ref{lem:stability}, \cite[Lemma 3.1 (iii)]{brzezniak2013finite}, and estimate \eqref{eq:stability pressure4}, we obtain
		\begin{equation*}
		\EE\left(k\sum_{\ell =1}^{M}\lVert \sql\rVert^2\right) \leq C(T,L,\nu,\bu^0).
		\end{equation*}
		The Minkowsky inequality and Poincar\'e inequality imply
		\begin{align*}
		\EE\left(k\sum_{\ell =1}^{M}\lVert \preel\rVert^2\right) 	
		&\leq C(T,L,\nu,\bu^0) + C(L)\EE\left(k\sum_{\ell =1}^{M}\lVert \nabla\prel\rVert^2\right).
		\end{align*}
		We finish the proof  with  using \cite[Lemma 3.2 (i)]{carelli2012rates}, where the authors proved that
		\[
		\EE\left(k\sum_{\ell =1}^{M}\lVert \nabla\prel\rVert^2\right)\leq C(T).
		\]
		
	\end{proof}
	\subsection{Auxiliary error estimates}
	We start with \cref{algo:auxilary pbm for z}. Let $\bz=\left\lbrace \bz(t,\cdot): t\in[t_{\ell-1},t_\ell]\right\rbrace$ be the strong solution of \eqref{eq:NSE auxiliary} as defined in Definition~\ref{def:strong solution} and $\pi=\left\lbrace \pi(t,\cdot): t\in[t_{\ell-1},t_\ell]\right\rbrace$ the associated pressure, i.e.\ for every $t\in[t_{\ell-1},t_\ell]$,  all $\bfi\in\WW^{1,2}$, and all $\chi\in L^2_\per$, we have  $\PP\mbox{-a.s.}$
	\begin{align}
	\label{eq:auxiliary 1 strong 1}
	\begin{split}
	(\bz(t_{\ell})-\bz(t_{\ell-1}),\bfi) + \nu\int_{t_{\ell-1}}^{t_\ell} (\nabla\bz(s),\nabla\bfi)ds
	\\
	+ \int_{t_{\ell-1}}^{t_\ell} (\nabla\pi(s),\bfi)ds &= \int_{t_{\ell-1}}^{t_\ell}\left(\bfi,d\bW(s)\right),
	\end{split}
	\\
	\label{eq:auxiliary 1 strong 2}
	(\dv\bz(t_{\ell}),\chi) &= 0.
	\end{align}
	\noindent
	For \eqref{eq:auxiliary 1 strong 1} and \eqref{eq:auxiliary 1 strong 2} we have the following algorithm:
	\begin{algo}[First auxiliary algorithm]\label{algo:auxilary pbm for z}
		Let $\bz^0:=0$. Find for every $\ell\in\{1,\ldots,M\}$ a pair of random variables $(\bz^\ell,\pi^\ell)$ with values in $\WW^{1,2}_\per\times L^2_\per$, such that we have $\PP$-a.s.
		
		$\bullet$ Penalization:
		\begin{align}
		\label{eq:auxilary pbm for z 1}
		\begin{split}
		(\tbzel - \bzell,\bfi)+\nu k(\nabla\tbzel,\nabla\bfi)+k(\nabla \tpiel,\bfi )
		\\
		+k(\nabla \xiell,\bfi )&= (\Delta_\ell\bW,\bfi),\quad \forall\;\bfi\in \WW^{1,2}_\per,
		\end{split}
		\\
		\label{eq:auxilary pbm for z 2}
		(\dv\tbzel,\chi) + \varepsilon (\tpiel,\chi)&=0,\quad \forall\; \chi \in L^2_\per.
		\end{align}
		
		$\bullet$ Projection:
		\begin{align}
		\label{eq:zprojection 1}
		(\bzel - \tbzel,\bfi) + \alpha k(\nabla(\xiel-\xiell),\bfi)  &=0,\;\forall\; \bfi\in \WW^{1,2}_\per,
		\\
		\label{eq:zprojection 2}
		(\dv \bzel,\chi) &= 0,\;\forall\; \chi \in L^2_\per,
		\\\notag
		\piel &=\tpiel + \xiel + \alpha(\xiel -\xiell).
		\end{align}
	\end{algo}
	Define the errors $\tel= \bz(t_\ell)-\tbzel$, $\el= \bz(t_\ell)-\bzel$ and $\vpil = \pi(t_\ell)-\piel$. We subtract \eqref{eq:auxilary pbm for z 1} from \eqref{eq:auxiliary 1 strong 1} to get
	\begin{align}\label{eq:auxilary pbm for z error estimate 1}
	\begin{split}
	(\tel-\eell,\bfi) &+  \nu\int_{t_{\ell-1}}^{t_\ell}(\nabla(\bz(s)-\tbzel),\nabla\bfi)ds
	\\
	&+\int_{t_{\ell-1}}^{t_\ell}(\nabla(\pi(s)-\tpiel-\xiell),\bfi)ds  = 0,
	\end{split}		
	\end{align}
	and choose $\chi=\dv \bfi$ in \eqref{eq:auxilary pbm for z 2} to get
	\begin{equation}
	\label{eq:pseudo-compressible constraint for z}
	-(\nabla\piel,\bfi) = \frac{1}{\varepsilon}(\nabla\dv \tbzel,\bfi).
	\end{equation}
	Thanks to the following identities
	\begin{align}
	\label{eq:identification dz 1}
	(\nabla(\bz(s)-\tbzel),\nabla\bfi)&=  (\nabla\te^\ell, \nabla\bfi)+ \left(\nabla (\bz(s)- \bz(t_\ell)),\nabla\bfi\right) \mbox{ and }
	\\
	\label{eq:identification dz 2}
	(\nabla(\pi(s)-\piel-\xiell),\bfi) &= (\nabla\pi(s),\bfi) + \frac{1}{\varepsilon}(\nabla\dv\tbzel,\bfi) - (\nabla\xiell,\bfi),
	\end{align}
	the equation \eqref{eq:auxilary pbm for z error estimate 1} is reduced to
	\begin{equation}
	\label{eq:auxilary pbm for z error estimate 3}
	\begin{split}
	(\te^\ell-\e^{\ell-1},\bfi) +  \nu k(\nabla\te^\ell, \nabla\bfi)
	&- \frac{k}{\varepsilon}(\nabla\dv\tbzel,\bfi)
	\\
	&- k(\nabla\xiell,\bfi) =R_\ell^\bz(\bfi)-\int_{t_{\ell-1}}^{t_\ell}(\nabla\pi(s),\bfi)ds,
	\end{split}
	\end{equation}
	where
	\[
	\begin{split}
	R_\ell^\bz(\bfi) =\nu \int_{t_{\ell-1}}^{t_\ell}\left(\nabla (\bz(t_\ell)- \bz(s)),\nabla\bfi\right)ds.
	\end{split}
	\]
	To \eqref{eq:auxilary pbm for z error estimate 3} we associate the following projection equation
	\begin{equation}
	\label{eq:projection error on z}
	\left\{
	\begin{split}
	(\el-\tel,\bfi) &=k\alpha(\nabla(\xiel - \xiell),\bfi),
	\\
	\dv\el &= 0.
	\end{split}
	\right.
	\end{equation}
	\begin{lem}\label{lem:auxiliary pbm for z error estimate}Let $\alpha>1$ and $0<\eta<1/2$. For every $\varepsilon>0$, there exists a constant $C = C(T,\nu,\eta)>0$ such that
		\begin{align}\label{eq:auxiliary pbm for z error estimate 0}
		\EE\max_{1<m\leq M}\lVert \e^m\rVert^2+\nu\EE\left(k\sum_{\ell=1}^{M}\lVert \nabla\el\rVert^2
		\right)&\leq C(k^\eta + \varepsilon).
		\end{align}
		
	\end{lem}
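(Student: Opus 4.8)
The plan is to test the error equation \eqref{eq:auxilary pbm for z error estimate 3} with $\bfi = \tel$ and combine with the projection relation \eqref{eq:projection error on z} to get a telescoping energy identity, then sum over $\ell$, take the maximum, and take expectations, exactly in the pattern of the stability proof of \cref{lem:stability}. First I would set $\bfi = \tel$ in \eqref{eq:auxilary pbm for z error estimate 3}; the term $\nu k(\nabla\tel,\nabla\tel)$ is good, and the penalty term $-\tfrac{k}{\varepsilon}(\nabla\dv\tbzel,\tel)$ should be rewritten via $\tel = \bz(t_\ell)-\tbzel$ and $\dv\bz(t_\ell)=0$ (from \eqref{eq:auxiliary 1 strong 2}) as $+\tfrac{k}{\varepsilon}\lVert\dv\tbzel\rVert^2 \ge 0$, so it moves harmlessly to the left-hand side. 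For the difference term $(\tel-\eell,\tel)$ I would use the projection identity: write $\tel = \el - k\alpha\nabla(\xiel-\xiell)$ wherever needed, and use \eqref{eq:projection error on z} together with the algebraic identity \eqref{eq:algebraic identity} to produce $\lVert\el\rVert^2 - \lVert\eell\rVert^2$ plus nonnegative quadratic remainders plus the telescoping $\nabla\xiel$ terms $\alpha k^2(\lVert\nabla\xiel\rVert^2 - \lVert\nabla\xiell\rVert^2)$, just as \eqref{eq:penalty stability} through \eqref{eq:penalty stability9} were assembled. This is the bookkeeping core and is essentially identical to \cref{lem:stability}; I would not grind through it in detail.

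Next I would estimate the two right-hand side contributions. The term $R_\ell^\bz(\tel) = \nu\int_{t_{\ell-1}}^{t_\ell}(\nabla(\bz(t_\ell)-\bz(s)),\nabla\tel)\,ds$ is controlled by Young's inequality, putting $\tfrac{\nu k}{2}\lVert\nabla\tel\rVert^2$ on the left and leaving $C\nu\int_{t_{\ell-1}}^{t_\ell}\lVert\nabla(\bz(t_\ell)-\bz(s))\rVert^2\,ds$; after summation and expectation, the time-regularity of the Stokes process $\bz$ (the same H\"older-in-time estimate as in \cref{lem:holder continuity of u}\ref{item:holder continuity of u 2}, applied to $\bz$ which solves the \emph{linear} equation \eqref{eq:NSE auxiliary} and is even more regular) gives a bound of order $k^\eta$. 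The pressure term $-\int_{t_{\ell-1}}^{t_\ell}(\nabla\pi(s),\tel)\,ds$ must be handled carefully: since $\dv\tel = \dv\bz(t_\ell) - \dv\tbzel = -\dv\tbzel$, integration by parts gives $\int_{t_{\ell-1}}^{t_\ell}(\pi(s),\dv\tbzel)\,ds$, which by Young's inequality is bounded by $\tfrac{\varepsilon k}{4}\lVert\dv\tbzel\rVert^2 \cdot \varepsilon^{-1}\!\cdot\!(\dots)$ — more precisely $\le \tfrac{1}{4\varepsilon}\big(\int_{t_{\ell-1}}^{t_\ell}\lVert\pi(s)\rVert\,ds\big)^2 \cdot k^{-1}\cdot k + \tfrac{k}{\varepsilon}\lVert\dv\tbzel\rVert^2\cdot\tfrac14$, absorbing the $\tfrac{k}{\varepsilon}\lVert\dv\tbzel\rVert^2$ part into the penalty term already on the left, and leaving $\tfrac{C}{\varepsilon}\int_{t_{\ell-1}}^{t_\ell}\lVert\pi(s)\rVert^2\,ds$; summing over $\ell$ and using $\EE\int_0^T\lVert\pi(s)\rVert^2\,ds \le C$ (the linear-Stokes analogue of \cref{prop:bound for the pressure}, with the $\LL^4$/enstrophy bounds for $\bz$) yields a contribution of order $\varepsilon$. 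This pairing of the penalty term against itself is, I expect, the main obstacle: one must be sure the constant in front of $\tfrac{k}{\varepsilon}\lVert\dv\tbzel\rVert^2$ stays strictly less than the coefficient supplied by the left-hand side so the absorption is legitimate.

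Finally I would collect everything: after summing \eqref{eq:auxilary pbm for z error estimate 3} tested with $\tel$ from $\ell=1$ to $m$, using $\e^0 = 0$, $\xi^{\varepsilon,0}=0$ (so the $\nabla\xiel$ telescope closes with a sign that is harmless, since $\alpha k^2\lVert\nabla\xiem\rVert^2 \ge 0$ sits on the left), taking $\max_{1\le m\le M}$ and then $\EE$, and invoking the two estimates above, I obtain
\[
\EE\max_{1<m\le M}\lVert\e^m\rVert^2 + \nu\,\EE\Big(k\sum_{\ell=1}^{M}\lVert\nabla\el\rVert^2\Big) \le C\big(k^\eta + \varepsilon\big),
\]
with $C = C(T,\nu,\eta)$ independent of $\varepsilon$ and $k$, which is precisely \eqref{eq:auxiliary pbm for z error estimate 0}. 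The only genuinely new ingredient beyond the template of \cref{lem:stability} is the $\varepsilon$-weighted pressure estimate, and the only place to be careful is the constant-chasing in the absorption of the penalty term.
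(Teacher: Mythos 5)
Your proposal follows the paper's proof essentially step for step: test the error equation \eqref{eq:auxilary pbm for z error estimate 3} with (a multiple of) $\tel$, use the projection relation \eqref{eq:projection error on z} together with the algebraic identity \eqref{eq:algebraic identity} to telescope the $\lVert\el\rVert^2$ and $\lVert\nabla\xiel\rVert^2$ terms, keep the penalty term $\tfrac{k}{\varepsilon}\lVert\dv\tbzel\rVert^2$ on the left, bound $R_\ell^\bz$ via Young and the H\"older-in-time regularity of $\bz$ to get the $k^\eta$ contribution, and trade the pressure term against the penalty term to get the $\varepsilon$ contribution. The one thing to repair is the bookkeeping in that last trade: if you put the weight $\tfrac{k}{4\varepsilon}$ on $\lVert\dv\tbzel\rVert^2$ so it can be absorbed, the conjugate Young weight is $\tfrac{\varepsilon}{k}$, and after Cauchy--Schwarz in time the leftover is $\varepsilon\int_{t_{\ell-1}}^{t_\ell}\lVert\pi(s)\rVert^2\,ds$ (summing to $O(\varepsilon)$, exactly as in \eqref{eq:auxiliary pbm for z error estimate 1 for pressure}), not the $\tfrac{C}{\varepsilon}\int_{t_{\ell-1}}^{t_\ell}\lVert\pi(s)\rVert^2\,ds$ you wrote, which would sum to $O(1/\varepsilon)$ and contradict the $O(\varepsilon)$ conclusion you then assert; the approach is right, but the displayed intermediate inequality does not imply the lemma as stated for every $\varepsilon>0$.
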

	\begin{proof}
		We take $\bfi= 2\te^\ell$ in \eqref{eq:auxilary pbm for z error estimate 3}. Then we use the algebraic identity \eqref{eq:algebraic identity} and the fact that $\dv\bz(t_\ell) = 0$ to get
		\begin{equation}\label{eq:auxiliary pbm for z error estimate first}
		\begin{split}
		\lVert \te^\ell\rVert^2-\lVert\eell \rVert^2&+\lVert \te^\ell-\e^{\ell-1} \rVert^2 +2\nu k\lVert \nabla\te^\ell\rVert^2+ \frac{2k}{\varepsilon}\lVert\dv\te^\ell\rVert^2\\
		&= 2k(\nabla\xiell,\te^\ell)+R_\ell^\bz(2\te^\ell) +\int_{t_{\ell-1}}^{t_\ell}(\dv\te^\ell,\pi(s))ds.
		\end{split}
		\end{equation}
		Let us take $\bfi = \tel +\el$ in \eqref{eq:projection error on z} to get
		\begin{align}
		\label{eq:projection error on z 1}
		\frac{\alpha-1}{\alpha}\left(\lVert \e^\ell\rVert^2 -\lVert\te^\ell \rVert^2 + \lVert\te^\ell-\e^\ell \rVert^2\right) &= 0,
		\\
		\label{eq:projection error on z 2}
		\frac{1}{\alpha}\left(\lVert \e^\ell\rVert^2  -\lVert \te^\ell\rVert^2\right) &= \frac{k}{2}(\nabla(\xiel -\xiell),\te^\ell).
		\end{align}
		Collecting \eqref{eq:auxiliary pbm for z error estimate first} to \eqref{eq:projection error on z 2} together, we arrive at
		\begin{align}\label{eq:auxiliary pbm for z error estimate first1}
		\begin{split}
		\lVert \e^\ell\rVert^2-\lVert\e^{\ell-1} \rVert^2
		&+\lVert \te^\ell-\e^{\ell-1} \rVert^2+(\tfrac{\alpha-1}{\alpha})\lVert\te^\ell-\e^\ell \rVert^2 +2\nu k\lVert \nabla\te^\ell\rVert^2
		\\
		+ \frac{2k}{\varepsilon}\lVert\dv\te^\ell\rVert^2&\leq 2{k}(\nabla(\xiel+\xiell),\te^\ell)+R_\ell^\bz(2\te^\ell) +\int_{t_{\ell-1}}^{t_\ell}(\dv\te^\ell,\pi(s))ds.
		\end{split}
		\end{align}
		First, notice that from \eqref{eq:projection error on z} it follows
		\[
		\te^\ell = \e^\ell -k\alpha\nabla(\xiel -\xiell).
		\]
		Therefore, we have
		\begin{align}\label{eq:auxiliary pbm for z error estimate first2}
		2{k}(\nabla(\xiel+\xiell),\te^\ell)
		=2\alpha{k}^2\lVert \xiell\rVert^2- 2\alpha{k}^2\lVert\xiel \rVert^2.
		\end{align}
		Secondly, applying the Young inequality to $R_\ell^z(2\te^\ell)$, we get
		\begin{align}\label{eq:auxiliary pbm for z error estimate 1 for R}
		R_\ell^z(2\te^\ell) &\leq C_\dun\nu\int_{t_{\ell-1}}^{t_\ell}\lVert \nabla(\bz(t_\ell)- \bz(s))\rVert^2ds + \dun k\lVert \nabla\te^\ell\rVert^2,
		\\
		\label{eq:auxiliary pbm for z error estimate 1 for pressure}
		\int_{t_{\ell-1}}^{t_\ell}(\dv\te^\ell,\pi(s))ds&\leq\frac{k}{\varepsilon}\lVert\dv\te^\ell\rVert^2+\varepsilon\int_{t_{\ell-1}}^{t_\ell}\lVert\pi(s)\rVert^2ds.
		\end{align}
		We add \eqref{eq:auxiliary pbm for z error estimate first2} to \eqref{eq:auxiliary pbm for z error estimate 1 for pressure} with \eqref{eq:auxiliary pbm for z error estimate first1}. Summing up for $\ell=1$ to $\ell= m$,
		\begin{align*}
		\begin{split}
		&\lVert \e^m\rVert^2 + 2\alpha{k}^2\lVert \xiem\rVert^2
		+(\tfrac{\alpha-1}{\alpha})\sum_{\ell = 1}^{m}\lVert\tel-\el \rVert^2
		\\
		&+(2-\dun)\nu\left(k\sum_{\ell = 1}^{m}\lVert \nabla\tel\rVert^2\right)+ \frac{1}{\varepsilon}\EE\left(k\sum_{\ell =1}^{m}\lVert\dv\tel\rVert^2\right)
		\\
		&\leq  C_\dun\nu\sum_{\ell=1}^{m}\int_{t_{\ell-1}}^{t_\ell}\EE\lVert \nabla(\bz(t_\ell)- \bz(s))\rVert^2 +\varepsilon\sum_{\ell=1}^{m}\int_{t_{\ell-1}}^{t_\ell}\EE\lVert\pi(s)\rVert^2ds.
		\end{split}
		\end{align*}
		Using the identity~\eqref{eq:pty ortho proj}, we have
		\begin{align*}
		\begin{split}
		&\frac{1}{\alpha}\lVert \e^m\rVert^2+ \frac{\alpha-1}{\alpha}\lVert \te^m\rVert^2+ 2\alpha{k}^2\lVert \xiem\rVert^2
		+\frac{\alpha-1}{\alpha}\sum_{\ell = 1}^{m-1}\lVert\tel-\el \rVert^2
		\\
		&+(2-\dun)\nu\left(k\sum_{\ell = 1}^{m}\lVert \nabla\tel\rVert^2\right)+ \frac{1}{\varepsilon}\EE\left(k\sum_{\ell =1}^{m}\lVert\dv\tel\rVert^2\right)
		\\
		&\leq  C_\dun\nu\sum_{\ell=1}^{m}\int_{t_{\ell-1}}^{t_\ell}\EE\lVert \nabla(\bz(t_\ell)- \bz(s))\rVert^2 +\varepsilon\sum_{\ell=1}^{m}\int_{t_{\ell-1}}^{t_\ell}\EE\lVert\pi(s)\rVert^2ds.
		\end{split}
		\end{align*}
		Now taking the maximum for $1<m\leq M$, and expectation, we arrive at
		\begin{align}\label{eq:auxiliary pbm for z error estimate first3}
		\begin{split}
		&\EE\max_{1\leq m\leq M}\left\{\frac{1}{\alpha}\lVert \e^m\rVert^2 +\frac{\alpha-1}{\alpha}\lVert \te^m\rVert^2 + 2\alpha{k}^2\lVert \xiem\rVert^2\right\}
		\\
		&+\frac{\alpha-1}{\alpha}\EE\sum_{\ell = 1}^{M-1}\lVert\tel-\el \rVert^2
		\\
		&+(2-\dun)\nu \EE\left(k\sum_{\ell = 1}^{M}\lVert \nabla\tel\rVert^2\right)+ \frac{1}{\varepsilon}\EE\left(k\sum_{\ell =1}^{M}\lVert\dv\tel\rVert^2\right)
		\\
		&\leq  C_\dun\nu\sum_{\ell=1}^{M}\int_{t_{\ell-1}}^{t_\ell}\EE\lVert \nabla(\bz(t_\ell)- \bz(s))\rVert^2 +\varepsilon\sum_{\ell=1}^{M}\int_{t_{\ell-1}}^{t_\ell}\EE\lVert\pi(s)\rVert^2ds.
		\end{split}
		\end{align}
		Finally, we choose $\dun>0$ so that $(2-\dun)$ stays positive and conclude the proof with Lemma~\ref{lem:holder continuity of u}, Proposition~\ref{prop:bound for the pressure} and \eqref{eq:V-estimate of u 2}, and the stability of $\bP_\HH$ in $\WW^{1,2}$.

	\end{proof}
	
	\begin{lem}\label{lem:varpi error estimate}
		Let $\alpha>1$ and $0<\eta<1/2$. For every $\varepsilon>0$, there exists a constant $C = C(T,\nu,\eta)>0$ such that we have
		\begin{align}
		\label{eq:auxiliary pbm for z error estimate 00}
		\EE\left(k\sum_{\ell=1}^{M}\lVert \vpil\rVert^2\right)
		&\leq C (k^\eta+\varepsilon).
		\end{align}
	\end{lem}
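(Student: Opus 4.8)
The goal is to bound $\EE(k\sum_\ell \lVert \vpil\rVert^2)$ where $\vpil = \pi(t_\ell)-\piel$ is the pressure error in the first auxiliary algorithm. The natural tool is the LBB inequality~\eqref{eq:LBB condition}, exactly as in the proof of \cref{lem:stability for pressure}: we must convert the velocity-type error estimates of \cref{lem:auxiliary pbm for z error estimate} into a pressure estimate. First I would rearrange the discrete error identity~\eqref{eq:auxilary pbm for z error estimate 3} to isolate the gradient term. Since $\piel = \tpiel + \xiel + \alpha(\xiel-\xiell)$, the quantity $\vpil$ involves both $\tpiel$ and the $\xi$-iterates; I will express $\nabla\vpil$ through $-(\te^\ell-\eell)/k$, the viscous term $-\nu\nabla\te^\ell$, the time-averaged true pressure $\frac1k\int_{t_{\ell-1}}^{t_\ell}\nabla\pi(s)\,ds$, and the remainder $R_\ell^\bz$, together with the projection relation~\eqref{eq:projection error on z} linking $\tel$ and $\el$ to $\nabla(\xiel-\xiell)$.

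**Key steps.** After testing~\eqref{eq:auxilary pbm for z error estimate 3} against a generic $\bfi\in\WW^{1,2}_\per$ and dividing by $\lVert\bfi\rVert_1$, I would estimate each term: $(\te^\ell-\eell,\bfi)/\lVert\bfi\rVert_1 \le \lVert \te^\ell-\eell\rVert_{-1}$ (or simply the $L^2$ norm); the viscous term by $\nu k\lVert\nabla\te^\ell\rVert$; the divergence term $\frac{k}{\varepsilon}(\nabla\dv\tbzel,\bfi)$ by $\frac{k}{\varepsilon}\lVert\dv\tbzel\rVert\lVert\bfi\rVert_1$, hence contributing $\frac{k}{\varepsilon}\lVert\dv\te^\ell\rVert$ after using $\dv\bz(t_\ell)=0$; the term $k(\nabla\xiell,\bfi)$ and the analogous $\xiel$ contribution by $k\lVert\nabla\xiell\rVert$ and $k\lVert\nabla\xiel\rVert$; the remainder by $\nu\int_{t_{\ell-1}}^{t_\ell}\lVert\nabla(\bz(t_\ell)-\bz(s))\rVert\,ds$; and the true-pressure term by $\int_{t_{\ell-1}}^{t_\ell}\lVert\pi(s)\rVert\,ds$. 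Squaring, applying~\eqref{eq:LBB condition} and Cauchy--Schwarz in the time integrals, summing over $\ell=1,\dots,M$, multiplying by $k$ and taking expectations yields a bound of the form
\begin{align*}
\EE\Big(k\sum_{\ell=1}^{M}\lVert\vpil\rVert^2\Big)
&\le C\,\EE\sum_{\ell=1}^{M}\lVert\te^\ell-\eell\rVert^2
+ C\nu^2\,\EE\Big(k\sum_{\ell=1}^{M}\lVert\nabla\te^\ell\rVert^2\Big)
+ \frac{C}{\varepsilon^2}\,\EE\Big(k\sum_{\ell=1}^{M}\lVert\dv\te^\ell\rVert^2\Big)\\
&\quad + C\,\EE\Big(k^3\sum_{\ell=1}^{M}(\lVert\nabla\xiel\rVert^2+\lVert\nabla\xiell\rVert^2)\Big)
+ C\nu^2\sum_{\ell=1}^{M}\int_{t_{\ell-1}}^{t_\ell}\EE\lVert\nabla(\bz(t_\ell)-\bz(s))\rVert^2\,ds
+ C\,k\sum_{\ell=1}^{M}\int_{t_{\ell-1}}^{t_\ell}\EE\lVert\pi(s)\rVert^2\,ds.
\end{align*}
Now every right-hand term is controlled: the first two and the fourth come from~\eqref{eq:auxiliary pbm for z error estimate 0} and from~\eqref{eq:auxiliary pbm for z error estimate first3} (which also bounds $k^2\EE\max_m\lVert\xiem\rVert^2$ and the $\dv\te^\ell$ sum); the divergence term carries a factor $\frac1{\varepsilon^2}$ against $\EE(k\sum\lVert\dv\te^\ell\rVert^2)\le C\varepsilon(k^\eta+\varepsilon)$ from~\eqref{eq:auxiliary pbm for z error estimate first3}, giving $\frac{C}{\varepsilon}(k^\eta+\varepsilon)$ — so here I must instead track the $\frac1\varepsilon\EE(k\sum\lVert\dv\te^\ell\rVert^2)$ bound and keep only one power of $1/\varepsilon$, i.e. estimate $\frac{k}{\varepsilon}\lVert\dv\te^\ell\rVert\lVert\bfi\rVert_1$ more carefully via $\frac{k}{\varepsilon}\lVert\dv\tbzel\rVert = \frac{\sqrt k}{\sqrt\varepsilon}\cdot\frac{\sqrt k}{\sqrt\varepsilon}\lVert\dv\te^\ell\rVert$ and absorb accordingly; the fifth term is $O(k^\eta)$ by the time-regularity of $\bz$ (\cref{lem:holder continuity of u} applied to the Ornstein--Uhlenbeck-type process, or the analogue of~\eqref{eq:increment of a wiener process}); and the last is $O(k)$ since $\EE\int_0^T\lVert\pi(s)\rVert^2\,ds<\infty$ by \cref{prop:bound for the pressure} and~\eqref{eq:V-estimate of u 2}.

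**Main obstacle.** The delicate point is bookkeeping the powers of $\varepsilon$: the divergence contribution must not produce a term worse than $O(\varepsilon)$ or $O(k^\eta)$. The clean way is to split $\frac{k}{\varepsilon}(\nabla\dv\tbzel,\bfi)$ with a Young inequality so that part is absorbed into the already-established $\frac1\varepsilon\EE(k\sum\lVert\dv\te^\ell\rVert^2)\le C(k^\eta+\varepsilon)$ estimate and the rest is balanced against $\lVert\bfi\rVert_1^2$, never introducing a bare $1/\varepsilon^2$. A secondary technical point is that $\nabla\vpil$ also contains the projection correction $\alpha k\nabla(\xiel-\xiell)$ coming through $\piel$; this is handled by~\eqref{eq:projection error on z}, which identifies $k\nabla(\xiel-\xiell)$ with $(\tel-\el)/\alpha$, whose sum is controlled by the $\sum\lVert\tel-\el\rVert^2$ term in~\eqref{eq:auxiliary pbm for z error estimate first3}. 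Once these two are arranged, collecting all bounds gives $\EE(k\sum_\ell\lVert\vpil\rVert^2)\le C(k^\eta+\varepsilon)$ with $C=C(T,\nu,\eta)$, as claimed.
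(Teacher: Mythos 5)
Your overall strategy (LBB applied to a rearranged error identity, feeding in Lemma~\ref{lem:auxiliary pbm for z error estimate}) is the right one, but the way you handle the penalization term is exactly where the argument breaks, and your proposed fix does not close it. You keep $\frac{k}{\varepsilon}(\nabla\dv\tbzel,\bfi)$ on the right-hand side as an error source and try to control $\frac{k}{\varepsilon}\lVert\dv\te^\ell\rVert$; but by \eqref{eq:auxilary pbm for z 2} this quantity is literally $k\lVert\tpiel\rVert$, i.e.\ the penalized pressure itself, so estimating it this way is circular, and any Young-type splitting still produces $\frac{k^2}{\varepsilon^2}\lVert\dv\te^\ell\rVert^2$, whence after summation $\frac1\varepsilon\cdot\frac{k}{\varepsilon}\sum_\ell\lVert\dv\te^\ell\rVert^2\leq\frac{C}{\varepsilon}(k^\eta+\varepsilon)$ --- not $C(k^\eta+\varepsilon)$ (for $\varepsilon=k^\eta$ this is merely $O(1)$). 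The paper removes the difficulty before any estimation: substituting \eqref{eq:pseudo-compressible constraint for z} (which says $-\nabla\tpiel=\frac1\varepsilon\nabla\dv\tbzel$) and the projection relation \eqref{eq:projection error on z} into \eqref{eq:auxilary pbm for z error estimate 3} reassembles $\tpiel+\xiel+\alpha(\xiel-\xiell)=\piel$ on the left, yielding the clean identity \eqref{eq:auxiliary pbm for z error estimate 7} in which $k(\nabla\vpil,\bfi)$ appears and no $1/\varepsilon$ term survives; the $\varepsilon$ in the final bound then enters only through Lemma~\ref{lem:auxiliary pbm for z error estimate}.

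Two further bookkeeping errors would each independently ruin the rate. First, you bound the true-pressure contribution by $\int_{t_{\ell-1}}^{t_\ell}\lVert\pi(s)\rVert\,ds$; after LBB, squaring, summing and dividing by $k$, this contributes $\sum_\ell\int_{t_{\ell-1}}^{t_\ell}\lVert\pi(s)\rVert^2\,ds=O(1)$ to $k\sum_\ell\lVert\vpil\rVert^2$ (the extra factor $k$ in your display is unjustified). The paper instead pairs $\int\nabla\pi(s)\,ds$ with the $k\nabla\pi(t_\ell)$ part of $k\nabla\vpil$ to form $\int\nabla(\pi(t_\ell)-\pi(s))\,ds$ and invokes the H\"older continuity of the pressure, Lemma~\ref{lem:holder continuity of u}~$(iii)$, to get $O(k^{\eta})$. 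Second, the increment term: bounding $(\e^\ell-\eell,\bfi)$ by the $L^2$ norm leaves you with $\frac1k\sum_\ell\lVert\e^\ell-\eell\rVert^2$, and Lemma~\ref{lem:auxiliary pbm for z error estimate} controls $\sum_\ell\lVert\te^\ell-\eell\rVert^2$ only up to $C(k^\eta+\varepsilon)$, without the extra factor of $k$ you need. One must work in $\WW^{-1,2}$ and run a separate duality step --- testing \eqref{eq:auxiliary pbm for z error estimate 7} with $\bA^{-1}(\e^\ell-\eell)$ and using Proposition~\ref{prop:equivalence of norms} --- to obtain $\sum_\ell\lVert\e^\ell-\eell\rVert_{-1}^2\leq Ck(k^\eta+\varepsilon)$; this is the term $\II$ in the paper's proof and is absent from your proposal. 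As written, the proposal therefore does not yield the claimed bound.
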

	\medskip
	\begin{proof}
		We substitute \eqref{eq:projection error on z} to \eqref{eq:auxilary pbm for z error estimate 3} and arrange the result such that we obtain
		\begin{equation}
		\label{eq:auxiliary pbm for z error estimate 7}
		\begin{split}
		(\e^\ell-\e^{\ell-1},\bfi) &+  \nu k(\nabla\te^\ell, \nabla\bfi)
		\\
		&+ k(\nabla\vpil,\bfi) =R_\ell^\bz(\bfi)+\int_{t_{\ell-1}}^{t_\ell}(\nabla(\pi(t_\ell)-\pi(s)),\bfi)ds.
		\end{split}
		\end{equation}
		Using identity \eqref{eq:pseudo-compressible constraint for z}, we get
		\begin{equation*}
		\begin{split}
		k(\nabla\vpil,\bfi) =(\e^{\ell-1}-\e^\ell,\bfi)+\int_{t_{\ell-1}}^{t_\ell} (\nabla(\pi(t_\ell)-\pi(s)),\bfi)ds  - k\nu(\nabla\te^\ell, \nabla\bfi)+R_\ell^\bz(\bfi).
		\end{split}
		\end{equation*}
		Using inequality \eqref{eq:LBB condition}, we derive that
		\begin{equation}\label{here}
		\begin{split}
		k^2\lVert \vpiel\rVert^2 \leq C\lVert R_\ell^\bz\rVert_{-1}^2&+  {C}\lVert \e^\ell-\e^{\ell-1}\rVert_{-1}^2 +(\nu k)^2\lVert \nabla\el\rVert^2
		\\
		&+ Ck\int_{t_{\ell-1}}^{t_\ell} \lVert\pi(t_\ell)-\pi(s)\rVert^2ds.
		\end{split}
		\end{equation}
		For brevity let us introduce the numbering
		\begin{align*}
		\I + \II + \III &+ \IV
		\\
		&:= C\lVert R_\ell^\bz\rVert_{-1}^2+  {C}\lVert \e^\ell-\e^{\ell-1}\rVert_{-1}^2+ Ck\int_{t_{\ell-1}}^{t_\ell} \lVert\pi(t_\ell)-\pi(s)\rVert^2ds +(\nu k)^2\lVert \nabla\el\rVert^2.
		\end{align*}
		First, we have for $\I$
		\begin{align}\notag
		\I=  \sup_{\bfi\in\WW^{1,2}}\frac{\left(R_\ell^\bz(\bfi)\right)^2}{\lVert \bfi\rVert_1^2}&= \left(\int_{t_{\ell-1}}^{t_\ell}\sup_{\bfi\in\WW^{1,2}}\nu\frac{\left(\nabla (\bz(t_\ell)- \bz(s)),\nabla\bfi\right)}{\lVert \bfi\rVert_1}ds\right)^2
		\\
		&\leq C(\nu,L)k\int_{t_{\ell-1}}^{t_\ell}\lVert\nabla (\bz(t_\ell)- \bz(s)\rVert^2 ds.
		\end{align}
		Now, we estimate the term II. Since $\e^\ell -\e^{\ell-1}\in \cD(\bA^{-1})$, we can take $\bfi=\bA^{-1}(\e^\ell -\e^{\ell-1})$ in identity \eqref{eq:auxiliary pbm for z error estimate 7}. From the orthogonality we get
		\begin{align}\label{eq:i}
		k(\nabla\vpil,\bA^{-1}(\e^\ell -\e^{\ell-1})) &= \int_{t_{\ell-1}}^{t_\ell}(\nabla(\pi(t_\ell)-\pi(s)), \bA^{-1}(\e^\ell -\e^{\ell-1}))ds= 0,
		\\\label{eq:ii}
		k(\nabla\vpil,\bA^{-1}(\e^\ell -\e^{\ell-1})) &= \int_{t_{\ell-1}}^{t_\ell}(\nabla(\pi(t_\ell)-\pi(s)), \bA^{-1}(\e^\ell -\e^{\ell-1}))ds= 0,
		\end{align}
		and from Proposition~\ref{prop:equivalence of norms} we get
		\begin{align}\label{eq:iii}
		\II&\leq C(\e^\ell-\e^{\ell-1},\bA^{-1}(\e^\ell -\e^{\ell-1})).
		\end{align}
		Applying the Young inequality we obtain the following results:
		\begin{align}\label{eq:iv}
		R_\ell^\bz(\bA^{-1}(\e^\ell -\e^{\ell-1}))\leq  C(\dun,\nu) k\int_{t_{\ell-1}}^{t_\ell}\lVert \nabla (\bz(t_\ell)- \bz(s))\rVert^2 ds+\dun\II.
		\end{align}
		Collecting the last four estimates we obtain 
		\begin{align}\label{eq:estimate of e in H-1}
		(1-2\dun)\II\leq C(\dun) k^2\lVert \nabla\te^\ell\rVert^2+ C(\dun,\nu) k\int_{t_{\ell-1}}^{t_\ell}\lVert \nabla (\bz(t_\ell)- \bz(s))\rVert^2 ds.
		\end{align}
		After choosing $\dun$ so that $(1-2\dun)>0$, we substitute  the estimates of $\I$ and $\II$ in \eqref{here}, let the terms $II$ and $IV$ unchanged,  and get
		in this way  the following  new estimate
		\begin{equation}\label{eq:auxiliary pbm for z error estimate 6}
		\begin{split}
		k^2\lVert \vpil\rVert^2&\leq C(\nu,L)k\int_{t_{\ell-1}}^{t_\ell}\lVert\nabla (\bz(t_\ell)- \bz(s)\rVert^2 ds+ C(\nu)k^2\lVert \nabla\te^\ell\rVert^2
		\\
		&+C k\int_{t_{\ell-1}}^{t_\ell}\lVert \nabla (\bz(t_\ell)- \bz(s))\rVert^2 ds
		+ Ck\int_{t_{\ell-1}}^{t_\ell} \lVert\pi(t_\ell)-\pi(s)\rVert^2ds.
		\end{split}
		\end{equation}
		By taking the sum for $\ell=1$ to $\ell=M$ and expectation in \eqref{eq:auxiliary pbm for z error estimate 6}, we get
		\begin{align*}
		k\EE&\left(k\sum_{\ell=1}^{M}\lVert \vpil\rVert^2\right)
		\leq C(\nu,L)k\sum_{\ell = 1}^{M}\int_{t_{\ell-1}}^{t_\ell}\EE\lVert\nabla (\bz(t_\ell)- \bz(s)\rVert^2 ds + +C(\nu) k\EE\left(k\sum_{\ell=1}^{M}\lVert \nabla\te^\ell\rVert^2\right)
		\\
		&+C(\nu) k\sum_{\ell=1}^{M}\int_{t_{\ell-1}}^{t_\ell}\EE\lVert \nabla (\bz(t_\ell)- \bz(s))\rVert^2ds
		+Ck\sum_{\ell=1}^{M}\int_{t_{\ell-1}}^{t_\ell}\EE \lVert\pi(t_\ell)-\pi(s)\rVert^2ds
		.
		\end{align*}
		From Lemma~\ref{lem:holder continuity of u} (iii) and Lemma~\ref{lem:auxiliary pbm for z error estimate} we obtain
		\begin{align*}
		k\EE\left(k\sum_{\ell=1}^{M}\lVert \vpil\rVert^2\right)
		&\leq C(L,T,\nu) (k^{\eta + 1}+k(k^\eta + \varepsilon)).
		\end{align*}

	\end{proof}
	%
	
	
	Let $\bv=\left\lbrace \bv(t,\cdot): t\in[t_{\ell-1},t_\ell]\right\rbrace$ be the strong solution of \eqref{eq:NSE auxiliary 2} as defined in \cref{def:strong solution} and $\rho=\left\lbrace \rho(t,\cdot): t\in[t_{\ell-1},t_\ell]\right\rbrace$ the associated pressure, i.e. for every $t\in[t_{\ell-1},t_\ell]$ and all $\bfi\in\WW^{1,2},\,\chi\in L^2_\per$, we have $\PP\mbox{-a.s.}$
	\begin{align}
	\label{eq:auxiliary 2 strong 1}
	\begin{split}
	(\bv(t_{\ell}),\bfi) + \nu\int_{t_{\ell-1}}^{t_\ell} (\nabla\bv(s),\nabla\bfi)ds+\int_{t_{\ell-1}}^{t_\ell}\tb(\bu(s), \bu(s),\bfi) ds
	\\
	+ \int_{t_{\ell-1}}^{t_\ell} (\nabla\rho(s),\bfi)ds &= \int_{t_{\ell-1}}^{t_\ell}\left(\bfi,d\bW(s)\right),
	\end{split}
	\\
	\label{eq:auxiliary 2 strong 2}
	(\dv\bv(t_{\ell}),\chi) &= 0.
	\end{align}
	To these equations correspond the following algorithm:
	\begin{algo}[Second auxiliary algorithm]\label{algo:auxilary pbm for v}
		Let $\bv^0:=\buo$ be a given $\VV$-valued random variable. Find for every $\ell\in\{1,\ldots,M\}$ a tuple of random variables $(\bvel,\roel)$ with values in $\WW^{1,2}_\per\times L^2_\per$, such that we have $\PP$-a.s.
		
		$\bullet$ Penalization:
		\begin{align}
		\label{eq:auxilary pbm for v 1}
		\begin{split}
		(\tbvel - \bvell,\bfi)+\nu k(\nabla\tbvel,\nabla\bfi)+k\tb(\tbvel+\tbzel,\tbvel+\tbzel,\bfi)
		\\
		+k(\nabla \troel,\bfi )+k(\nabla \psiell,\bfi )&= 0,\;\; \forall\; \bfi\in \WW^{1,2},
		\end{split}
		\\
		\label{eq:auxilary pbm for v 2}
		(\dv\tbvel,\chi) + \varepsilon (\troel,\chi)&=0,\;\;\forall\; \chi \in L^2_\per.
		\end{align}
		
		$\bullet$ Projection:
		\begin{align}
		\label{eq:vprojection 1}
		(\bvel - \tbvel,\bfi) + \alpha k(\nabla(\psiel-\psiell),\bfi)  &=0,\;\forall\; \bfi\in \WW^{1,2}_\per,
		\\
		\label{eq:vprojection 2}
		(\dv \bvel,\chi) &= 0,\;\forall\; \chi \in L^2_\per,
		\\\notag
		\roel &=\troel + \psiel + \alpha(\psiel -\psiell).
		\end{align}
	\end{algo}
	Define the errors $\bsigl= \bv(t_\ell)-\bvel$, $\tbsigl= \bv(t_\ell)-\tbvel$, $\tvrol = \rho(t_\ell)-\troel$, and $\vrol = \rho(t_\ell)-\roel$. Subtracting \eqref{eq:auxilary pbm for v 1} from \eqref{eq:auxiliary 2 strong 1} we get
	\begin{align}\label{eq:auxilary pbm for v error estimate 1}
	\begin{split}
	(\tbsigl-\bsigll,\bfi) +  \nu\int_{t_{\ell-1}}^{t_\ell}(\nabla(\bv(s)-\tbvel),\nabla\bfi)ds+\int_{t_{\ell-1}}^{t_\ell} \tb(\bu(s),\bu(s),\bfi) ds
	\\
	-\int_{t_{\ell-1}}^{t_\ell} \tb(\bu^\ell,\bu^\ell, \bfi) ds+\int_{t_{\ell-1}}^{t_\ell}(\nabla(\rho(s)-\troel),\bfi)ds-k(\nabla\psiell,\bfi) =0.
	\end{split}		
	\end{align}
	Choosing $\chi=\dv \tbvel$ in \eqref{eq:auxilary pbm for v 2} we get
	\begin{equation}
	\label{eq:auxilary pbm for z error estimate 2}
	-(\nabla\troel,\bfi) = \frac{1}{\varepsilon}(\nabla\dv \tbvel,\bfi).
	\end{equation}
	Thanks to the identities
	\begin{align}
	(\nabla(\bv(s)-\tbvel),\nabla\bfi)&=  (\nabla\tbsigl, \nabla\bfi)+ \left(\nabla (\bv(s)- \bv(t_\ell)),\nabla\bfi\right) \mbox{ and }\\
	(\nabla(\rho(s)-\troel),\bfi) &= (\nabla(\rho(s),\bfi) + \frac{1}{\varepsilon}(\nabla\dv\tbvel,\bfi),
	\end{align}
	equation~\eqref{eq:auxilary pbm for v error estimate 1} is reduced to
	\begin{equation}
	\label{eq:auxilary pbm for v error estimate 3}
	\begin{split}
	(\tbsigl-\bsigll,\bfi) &+  \nu k(\nabla\tbsigl, \nabla\bfi)+ \frac{k}{\varepsilon}(\nabla\dv\tbvel,\bfi)
	\\
	& -k(\nabla\psiell,\bfi)=Q_\ell(\bfi) + R_\ell^\bv(\bfi)+\int_{t_{\ell-1}}^{t_\ell}(\dv\tbsigl,\rho(s))ds,
	\end{split}
	\end{equation}
	where
	\[
	\begin{split}
	Q_\ell(\bfi)&=\int_{t_{\ell-1}}^{t_\ell} \Big( \tb(\bu(s),\bu(s),\bfi)-\tb(\tbuel,\tbuel, \bfi)\Big) ds,
	\\
	R_\ell^\bv(\bfi) &= \nu\int_{t_{\ell-1}}^{t_\ell}\left(\nabla (\bv(t_\ell)- \bv(s)),\nabla\bfi\right)ds.
	\end{split}
	\]
	To \eqref{eq:auxilary pbm for v error estimate 3} we associate the following projection equation
	\begin{equation}
	\label{eq:projection error on v}
	\left\{
	\begin{split}
	(\bsigl-\tbsigl,\bfi) &=k\alpha(\nabla(\psiel - \psiell),\bfi),
	\\
	\dv\bsigl &= 0.
	\end{split}\right.
	\end{equation}
	
	Let $\kappa_1, \kappa_2, \kappa_3 >0$ some fixed constants, and let us introduce the sample subsets
	\begin{equation}
	\label{eq:sample subset}
	\begin{split}
	\Omega_{\kappa_1} &= \left\{\omega\in\Omega: \sup_{0\leq t\leq T}\lVert \bu(t)\rVert^2_\VV+k\sum_{\ell=1}^{M}\lVert\bul \rVert_{1}^2 \leq \kappa_1\right\},
	\\
	\Omega_{\kappa_2} &= \left\lbrace\omega\in\Omega: \max_{1\leq m\leq M}\lVert\e^m\rVert^2+\nu k\sum_{\ell=1}^{M}\lVert\el \rVert_{1}^2 +k\sum_{\ell=1}^M \lVert \vpil\rVert^2\leq \kappa_2\right\rbrace, \mbox{ and }
	\\
	\Omega_{\kappa_3} &= \Bigg\{\omega\in\Omega:\forall\;  0\leq s<t\leq T,\;\lVert\bu(s)- \bu(t)\rVert^{2}_{\LL^4} \leq \kappa_3\lvert t-s\rvert^{2\eta}\Bigg\}.
	\end{split}		
	\end{equation}
	In the next paragraph we derive some error estimates on the intersection of these subsets of $\Omega$.
	
	\begin{lem}\label{lem:auxiliary pbm for v error estimate}
		Let $\alpha>1$ and $0<\eta<1/2$. For every $\varepsilon>0$, there exists a constant $C = C(L,T,\nu)>0$ such that
		on $\Omega_{\kappa_1}\cap\Omega_{\kappa_2}\cap\Omega_{\kappa_3}$ we have
		\begin{align}\label{eq:estimate of sigma ell 1}
		\max_{1<m\leq M}\lVert \bsig^m\rVert^2+\nu k\sum_{\ell=1}^{M}\lVert \nabla\bsig^\ell\rVert^2
		\leq C (\kappa_1\kappa_3 k^{2\eta}+\kappa_1\kappa_2 +\kappa_2^2+ k^\eta+\varepsilon)\exp(\kappa_1).
		\end{align}
		
	\end{lem}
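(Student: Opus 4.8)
The plan is to obtain a discrete Gronwall-type estimate for the errors $\bsigl$ of the second auxiliary algorithm, mirroring the derivation of the stability estimates in \cref{lem:stability} and the error analysis in \cref{lem:auxiliary pbm for z error estimate}, but now tracking the nonlinear term $Q_\ell$. First I would test \eqref{eq:auxilary pbm for v error estimate 3} with $\bfi = 2\tbsigl$, use the algebraic identity \eqref{eq:algebraic identity} and $\dv\bv(t_\ell)=0$, then test the projection equation \eqref{eq:projection error on v} with $\bfi = \tbsigl+\bsigl$ and with $\bfi = \nabla(\psiel+\psiell)$ (exactly as in \eqref{eq:projection error on z 1}--\eqref{eq:projection error on z 2} and \eqref{eq:auxiliary pbm for z error estimate first2}), to arrive at an inequality of the form
\[
\lVert\bsigl\rVert^2 - \lVert\bsigll\rVert^2 + \lVert\tbsigl-\bsigll\rVert^2 + \tfrac{\alpha-1}{\alpha}\lVert\tbsigl-\bsigl\rVert^2 + 2\nu k\lVert\nabla\tbsigl\rVert^2 + \tfrac{2k}{\varepsilon}\lVert\dv\tbsigl\rVert^2 \le 2Q_\ell(\tbsigl) + 2R_\ell^\bv(\tbsigl) + 2\!\int_{t_{\ell-1}}^{t_\ell}\!(\dv\tbsigl,\rho(s))\,ds + (\text{telescoping }\psi\text{-terms}).
\]
The terms $R_\ell^\bv(\tbsigl)$ and $\int(\dv\tbsigl,\rho(s))\,ds$ are handled precisely as in \eqref{eq:auxiliary pbm for z error estimate 1 for R}--\eqref{eq:auxiliary pbm for z error estimate 1 for pressure}: Young's inequality absorbs them against $\nu k\lVert\nabla\tbsigl\rVert^2$ and $\tfrac{k}{\varepsilon}\lVert\dv\tbsigl\rVert^2$, leaving $C_\delta\nu\int_{t_{\ell-1}}^{t_\ell}\lVert\nabla(\bv(t_\ell)-\bv(s))\rVert^2\,ds$ and $\varepsilon\int_{t_{\ell-1}}^{t_\ell}\lVert\rho(s)\rVert^2\,ds$, which after summation and the time-regularity of $\bv$ (inherited from \cref{lem:holder continuity of u} via $\bv=\bu-\bz$) and \cref{prop:bound for the pressure} contribute the $k^\eta+\varepsilon$ part of the bound.

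\textbf{The main obstacle is the nonlinear term} $Q_\ell(\tbsigl) = \int_{t_{\ell-1}}^{t_\ell}\big(\tb(\bu(s),\bu(s),\tbsigl) - \tb(\tbuel,\tbuel,\tbsigl)\big)\,ds$. I would split it into a time-consistency piece and an error piece: writing $\bu(s) = \bu(t_\ell) - (\bu(t_\ell)-\bu(s))$ and recalling $\bu(t_\ell) = \bv(t_\ell)+\bz(t_\ell)$, $\tbuel = \tbvel + \tbzel$ (so that $\bu(t_\ell)-\tbuel = \tbsigl + \tel$), one expands the difference of trilinear forms bilinearly. The genuinely dangerous contribution is the quadratic-in-$\tbsigl$ term $\tb(\cdot,\cdot,\tbsigl)$ with both remaining slots involving $\tbsigl$ or $\bv$-type factors; here I would invoke estimate \eqref{eq:estimate 3 of tb} in the Ladyzhenskaya form to produce $C\lVert\tbsigl\rVert\,\lVert\tbsigl\rVert_1\,\lVert\bu\text{-factor}\rVert_1$, then Young's inequality splits off $\tfrac{\nu k}{2}\lVert\nabla\tbsigl\rVert^2$ and leaves $Ck\lVert\tbsigl\rVert^2\lVert\bu(t_\ell)\rVert_1^2$ — and it is precisely on the event $\Omega_{\kappa_1}$ that $k\sum_\ell \lVert\bul\rVert_1^2 \le \kappa_1$ (and $\sup_t\lVert\bu(t)\rVert_\VV^2\le\kappa_1$) makes the prefactor summable, so that discrete Gronwall yields the $\exp(\kappa_1)$ factor. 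The cross terms pairing $\tbsigl$ against $\tel$ or against $\bu(t_\ell)-\bu(s)$ are controlled using $\Omega_{\kappa_2}$ (which bounds $\max_m\lVert\e^m\rVert^2$, $\nu k\sum\lVert\el\rVert_1^2$, $k\sum\lVert\vpil\rVert^2$) and $\Omega_{\kappa_3}$ (which bounds $\lVert\bu(s)-\bu(t)\rVert_{\LL^4}^2 \le \kappa_3|t-s|^{2\eta}$); these account for the $\kappa_1\kappa_2$, $\kappa_2^2$, and $\kappa_1\kappa_3 k^{2\eta}$ terms in the final bound.

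\textbf{Then I would assemble the estimate} by summing over $\ell=1,\dots,m$, telescoping the $\psi$-terms and the $\bsigl$-norms, using $\bsig^0 = \bv(0)-\bv^0 = 0$, taking $\max_{1\le m\le M}$ and then expectation — wait, actually since all the control on $\Omega_{\kappa_1}\cap\Omega_{\kappa_2}\cap\Omega_{\kappa_3}$ is pathwise, I keep everything $\omega$-wise and only restrict to that event, so no expectation is needed and the estimate is deterministic on the event. The discrete Gronwall lemma applied to the recursion $a_m \le a_{m-1}(1 + C k \lVert\bu(t_m)\rVert_1^2) + (\text{source}_m)$ with $\sum_m k\lVert\bu(t_m)\rVert_1^2 \le \kappa_1$ produces the factor $\exp(C\kappa_1)$, and the accumulated source terms sum to $C(\kappa_1\kappa_3 k^{2\eta} + \kappa_1\kappa_2 + \kappa_2^2 + k^\eta + \varepsilon)$. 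Choosing the various Young parameters small enough (depending only on $\nu$, and on $\alpha>1$ to keep $\tfrac{\alpha-1}{\alpha}>0$) so that the left-hand side retains $\tfrac{\nu}{2} k\sum\lVert\nabla\tbsigl\rVert^2$ and, via $\bsigl = \tbsigl - \alpha k\nabla(\psiel-\psiell)$ together with the orthogonality \eqref{eq:pty ortho proj} and the telescoping of $\lVert\nabla\psiel\rVert^2$, also controls $\nu k\sum\lVert\nabla\bsigl\rVert^2$, gives \eqref{eq:estimate of sigma ell 1}. I expect the bookkeeping of the bilinear expansion of $Q_\ell$ — identifying which of the six or so resulting terms goes with which of $\kappa_1,\kappa_2,\kappa_3,k^\eta,\varepsilon$ — to be the most delicate part, but no single estimate there is harder than \eqref{eq:estimate 3 of tb} combined with Young.
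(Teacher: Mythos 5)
Your proposal follows essentially the same route as the paper: test \eqref{eq:auxilary pbm for v error estimate 3} with $2\tbsigl$ and repeat the manipulations of Lemma~\ref{lem:auxiliary pbm for z error estimate}, split $Q_\ell$ into time-consistency and error pieces via $\bu(s)-\bu(t_\ell)$ and $\bu(t_\ell)-\tbuel=\tel+\tbsigl$, absorb the quadratic-in-$\tbsigl$ term by Young and a pathwise discrete Gronwall on $\Omega_{\kappa_1}$ to produce $\exp(\kappa_1)$, and finish with the $\WW^{1,2}$-stability of $\bP_\HH$. The bookkeeping assigning the cross terms to $\kappa_1\kappa_2$, $\kappa_2^2$, $\kappa_1\kappa_3 k^{2\eta}$, $k^\eta$, and $\varepsilon$ matches the paper's $NLT_1$--$NLT_4$ estimates, so the argument is correct as outlined.
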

	\begin{proof}
		We take $\bfi= 2\tbsigl$ in \eqref{eq:auxilary pbm for v error estimate 3} and proceed exactly like in the proof of Lemma~\ref{lem:auxiliary pbm for z error estimate} until \eqref{eq:auxiliary pbm for z error estimate first3}. Doing so we arrive at
		\begin{equation}\label{eq:auxiliary pbm for v error estimate first3}
		\begin{split}
		\max_{1\leq m\leq M}\left\{(\tfrac{\alpha+1}{2\alpha})\lVert \bsig^m\rVert^2 + (\tfrac{\alpha-1}{2\alpha})\lVert \tbsig^m\rVert^2\right\}
		+(\tfrac{\alpha-1}{2\alpha})\sum_{\ell = 1}^{M-1}\lVert\tbsigl-\bsigl \rVert^2
		\\
		+(\nu-\dun)k\sum_{\ell = 1}^{M}\lVert \nabla\tbsigl\rVert^2
		\leq  C(\dun)\nu\sum_{\ell=1}^{M}\int_{t_{\ell-1}}^{t_\ell}\lVert \nabla(\bv(t_\ell)- \bv(s))\rVert^2
		\\
		+\varepsilon\sum_{\ell=1}^{M}\int_{t_{\ell-1}}^{t_\ell}\lVert\rho(s)\rVert^2ds+2\max_{1\leq m\leq M}\sum_{\ell = 1}^{m}Q_\ell(\tbsigl),
		\end{split}
		\end{equation}
		where
		\[
		\begin{split}
		Q_\ell(\tbsigl)&=\int_{t_{\ell-1}}^{t_\ell} \tb(\bu(s),\bu(s), \tbsigl)- \tb(\tbuel,\tbuel, \tbsigl) ds.
		\end{split}
		\]
		We split the term $Q_\ell$ into four terms as follows
		\begin{align*}
		Q_\ell(\tbsigl) \leq
		\int_{t_{\ell-1}}^{t_\ell}& \Big(\tb(\bu(s),\bu(s)-\bu(t_\ell),\tbsigl) +  \tb(\bu(s)-\bu(t_\ell),\bu(t_\ell),\tbsigl)
		\\
		&+\tb(\bu(t_\ell),\bu(t_\ell)-\tbuel,\tbsigl)+ \tb(\bu(t_\ell)-\tbuel,\tbuel,\tbsigl)\Big) ds
		\\
		\leq \int_{t_{\ell-1}}^{t_\ell} &\left( NLT_1(\tbsigl)+NLT_2(\tbsigl)+NLT_3(\tbsigl)+NLT_4(\tbsigl) \right) ds.
		\end{align*}
		In the next lines, we will estimate the terms $NLT_j(\tbsigl),~j= 1,\ldots,4$, one by one.
		
		$\bullet$ $NLT_1(\tbsigl)$: From \eqref{eq:estimate 1 of tb}, the Sobolev embedding $\WW^{1,2}(D)\subset\LL^4(D)$, and the Young inequality, we get the estimate
		\begin{align*}
		NLT_1(\tbsigl) \leq \lvert \tb(\bu(s),\tbsigl,\bu(s)-\bu(t_\ell)) \rvert
		&\leq C(\dun,L)\lVert\bu(s)\rVert_{1}^2\lVert\bu(s)-\bu(t_\ell)\rVert_{\LL^4}^2+\dun\lVert\tbsigl \rVert_{1}^2.
		\end{align*}
		Then, integrating over the time interval $[t_{\ell-1}, t_{\ell}]$ with respect to $s$, using the H\"older inequality, and since $\omega\in\Omega_{\kappa_1}\cap\Omega_{\kappa_2}\cap\Omega_{\kappa_3}$, we get
		\begin{align*}
		\int_{t_{\ell-1}}^{t_\ell} NLT_1(\tbsigl)ds
		&\leq C(\dun,L)\int_{t_{\ell-1}}^{t_\ell}\lVert \bu(s)\rVert_{1}^2\lVert \bu(s)-\bu(t_\ell)\rVert_{\LL^4}^2 ds+\dun k\lVert\tbsigl \rVert_{1}^2
		\\
		&\leq C(\dun,L)\sup_{t_{\ell-1}\leq s\leq t_\ell}\lVert \bu(s)\rVert_{1}^2\int_{t_{\ell-1}}^{t_\ell}\lVert \bu(s)-\bu(t_\ell)\rVert_{\LL^4}^2 ds+\dun k\lVert\tbsigl \rVert_{1}^2
		\\
		&\leq C(\dun,L)\sup_{t_{\ell-1}\leq s\leq t_\ell}\lVert \bu(s)\rVert_{1}^2\int_{t_{\ell-1}}^{t_\ell}\kappa_3\lvert s-t_\ell\rvert^{2\eta} ds+\dun k\lVert\tbsigl \rVert_{1}^2
		\\[5pt]
		&\leq C(\dun,L)\kappa_1\kappa_3 k^{2\eta+1}+\dun k\lVert\tbsigl \rVert_{1}^2.
		\end{align*}
		
		$\bullet$ $NLT_2(\tbsigl)$: Again from \eqref{eq:estimate 1 of tb} and the Young inequality, we infer
		\begin{align*}
		NLT_2(\tbsigl)\leq \lvert\tb(\bu(s)-\bu(t_\ell),\bu(t_\ell),\tbsigl)\rvert
		&\leq C(\dun) \lVert\bu(s)-\bu(t_\ell) \rVert_{\LL^4}^2\lVert \bu(t_\ell)\rVert_{1}^2 + \dun \lVert \tbsigl\rVert_{1}^2.
		\end{align*}
		Again, integrating over the time interval $[t_{\ell-1}, t_{\ell}]$ with respect to $s$ and since $\omega\in\Omega_{\kappa_2}$, we get
		\begin{align*}
		\int_{t_{\ell-1}}^{t_\ell}  NLT_2(\tbsigl) ds &\leq C(\dun) \lVert \bu(t_\ell)\rVert_{1}^2\int_{t_{\ell-1}}^{t_\ell} \lVert\bu(s)-\bu(t_\ell) \rVert_{\LL^4}^2ds + \dun k\lVert \tbsigl\rVert_{1}^2
		\\
		&\leq C(\dun) \lVert \bu(t_\ell)\rVert_{1}^2\int_{t_{\ell-1}}^{t_\ell}\kappa_3 \lvert  s-t_\ell\rvert^{2\eta } ds + \dun k\lVert \tbsigl\rVert_{1}^2
		\\[5pt]
		&\leq C(\dun) \kappa_3 k^{2\eta + 1}\lVert \bu(t_\ell)\rVert_{1}^2 + \dun k\lVert \tbsigl\rVert_{1}^2.
		\end{align*}
		Summing up from $\ell = 1$ to $\ell= M,$ using the H\"older inequality, and since $\omega\in\Omega_{\kappa_1}$, we get
		\begin{align*}
		\sum_{\ell=1}^{M}\int_{t_{\ell-1}}^{t_\ell}  NLT_2(\tbsigl) ds
		&\leq C(\dun) \kappa_3 k^{2\eta + 1}\sum_{\ell=1}^{M}\lVert \bu(t_\ell)\rVert_{1}^2 + \dun k\sum_{\ell=1}^{M}\lVert \bsig^\ell\rVert_{1}^2
		\\
		&\leq  C(\dun,T) \kappa_1\kappa_3 k^{2\eta } + \dun k\sum_{\ell=1}^{M}\lVert \bsig^\ell\rVert_{1}^2.
		\end{align*}
		
		$\bullet$ $NLT_3(\tbsigl)$: Since $\bu(t_\ell)-\tbuel = \tel+ \tbsigl$ and thanks to the orthogonal property of $\tb$ (see \cref{eq:orthogonal property 1}), we have
		\begin{align*}
		NLT_3(\tbsigl) = \lvert \tb(\bu(t_\ell),\bu(t_\ell)-\tbuel,\tbsigl)\rvert=\lvert  \tb(\bu(t_\ell),\tel+\tbsigl,\tbsigl)\rvert =\lvert \tb(\bu(t_\ell),\tel,\tbsigl)\rvert.
		\end{align*}
		From \eqref{eq:estimate 2 of tb} and the Young inequality, we have
		\begin{align*}
		NLT_3(\tbsigl)
		&\leq C(\dun,L)\lVert \bu(t_\ell)\rVert_{1}^2\lVert \tel\rVert_{1}^2+\dun\lVert\tbsigl \rVert_{1}^2.
		\end{align*}
		As before, integrating over the interval $[t_{\ell-1},t_{\ell}]$ with respect to $s$, we obtain
		\begin{align*}
		\int_{t_{\ell-1}}^{t_\ell} NLT_3(\tbsigl) ds\leq C_\dun(L) k\lVert \bu(t_\ell)\rVert_{1}^2\lVert \tel\rVert_{1}^2+\dun k\lVert\tbsigl \rVert_{1}^2.
		\end{align*}
		Summing up from $\ell = 1$ to $\ell= M$, using the H\"older inequality, and since $\omega\in\Omega_{\kappa_1}\cap\Omega_{\kappa_2}$, we have
		\begin{align*}
		\sum_{\ell=1}^{M}\int_{t_{\ell-1}}^{t_\ell} NLT_3(\bsig^\ell) ds
		&\leq C(\dun,L) k\sum_{\ell=1}^{M}\lVert \bu(t_\ell)\rVert_{1}^2\lVert \tel\rVert_{1}^2+\dun k\sum_{\ell=1}^{M}\lVert\tbsigl \rVert_{1}^2
		\\
		&\leq C(\dun,L) \max_{1\leq\ell\leq M}\lVert \bu(t_\ell)\rVert_{1}^2\left(k\sum_{\ell=1}^{M}\lVert \tel\rVert_{1}^2\right)+\dun k\sum_{\ell=1}^{M}\lVert\tbsigl \rVert_{1}^2
		\\
		&\leq C(\dun,L) \kappa_1\kappa_2+\dun k\sum_{\ell=1}^{M}\lVert\tbsigl \rVert_{1}^2.
		\end{align*}
		
		$\bullet$ $NLT_4(\tbsigl)$: By similar computations as before and using the fact that $\bu(t_\ell)-\tbuel = \tel+ \tbsigl$, we get
		
		\begin{align*}
		NLT_4(\tbsigl)
		=\lvert \tb(\tel+\tbsigl,\tbuel,\tbsigl)\rvert&\leq\lvert \tb(\tel,\tbuel,\tbsigl)\rvert + \lvert \tb(\tbsigl,\tbuel,\tbsigl)\rvert,
		\end{align*}
		For simplicity, let us introduce the notation
		\begin{align}
		NLT_{4,a}(\tbsigl) := \lvert \tb(\tel,\tbuel,\tbsigl)\rvert
		\intertext{and }
		NLT_{4,b}(\tbsigl): = \lvert \tb(\tbsigl,\tbuel,\tbsigl)\rvert.
		\end{align}
		We split  $NLT_{4,a}$ into two terms by replacing $\tbuel$ by $\tbu(t_\ell) + \tel$. Next, we apply \eqref{eq:estimate 2 of tb} and \eqref{eq:estimate 3 of tb} respectively.
		Finally,  we use the Young inequality to get
		\begin{align*}
		NLT_{4,a}(\tbsigl) &\leq\lvert \tb(\tel,\bu(t_\ell),\tbsigl)\rvert+\lvert \tb(\tel,\tel,\tbsigl)\rvert
		\\[5pt]
		&\leq C(\dun,L)\lVert \tel\rVert_{1}^2\lVert\bu(t_\ell) \rVert_{1}^2+C(\dun,L)\lVert \tel\rVert^2\lVert\tel \rVert_{1}^2+\dun\lVert\tbsigl \rVert_{1}^2.
		\end{align*}
		The term $NLT_{4,b}(\tbsigl)$ satisfies the skew-symmetry property (see \eqref{eq:skew-symmetry}). Therefore, using the estimate \eqref{eq:estimate 3 of tb} and the Young inequality, we get
		\begin{align*}
		NLT_{4,b}(\tbsigl) =\lvert \tb(\tbsigl,\tbsigl,\tbuel)\rvert &\leq C(L)\lVert \tbsigl\rVert\lVert \tbsigl\rVert_{1}\lVert \tbuel\rVert_{1}
		\\[5pt]
		&\leq C(\dun,L) \lVert \tbsigl\rVert^2\lVert \tbuel\rVert_{1}^2 + \dun \lVert \tbsigl\rVert_{1}^2.
		\end{align*}
		From these estimates, we obtain after an integration over the time interval $[t_{\ell-1},t_{\ell}]$ with respect to $s$ the estimate
		\begin{align*}
		\int_{t_{\ell-1}}^{t_\ell} NLT_4(\tbsigl) ds \leq C(\dun,L)&k\lVert \tel\rVert_{1}^2\lVert\bu(t_\ell) \rVert_{1}^2+C(\dun,L)k\lVert\tel\rVert^2\lVert\tel \rVert_{1}^2\\&+C(\dun,L)k \lVert \bsig^\ell\rVert^2\lVert \bu^\ell\rVert_{1}^2 + \dun k\lVert \bsig^\ell\rVert_{1}^2.
		\end{align*}
		Then, summing up,
		\begin{align*}
		\sum_{\ell=1}^{M}\int_{t_{\ell-1}}^{t_\ell} NLT_4(\tbsigl) ds &\leq C(\dun,L)k\sum_{\ell=1}^{M} \left[\lVert \tel\rVert_{1}^2\lVert\bu(t_\ell)\rVert_{1}^2+  \lVert\tel\rVert^2\lVert\tel \rVert_{1}^2+  \lVert \tbsigl\rVert^2\lVert \tbul\rVert_{1}^2\right]
		\\
		&+\dun k \sum_{\ell=1}^{M}\lVert\tbsigl \rVert_{1}^2
		\\
		&\leq C(\dun,L)\left[\kappa_1\kappa_2+\kappa_2^2+k\sum_{\ell=1}^{M}\lVert \tbsigl\rVert^2\lVert \tbuel\rVert_{1}^2\right]+\dun k \sum_{\ell=1}^{M}\lVert\tbsigl \rVert_{1}^2.
		\end{align*}
		Finally, the estimates obtained for $NLT_i(\tbsigl),\,i=1\ldots 4$ imply
		\begin{equation}
		\label{eq:nonlinear term Q}
		\begin{split}
		\sum_{\ell=1}^{M}Q_\ell(\tbsigl)\leq C(\dun,L,T)\left[ k\sum_{\ell=1}^{M}\lVert \tbsigl\rVert^2\lVert \tbuel\rVert_{1}^2+(\kappa_1\kappa_3 k^{2\eta}+\kappa_1\kappa_2 +\kappa_2^2)\right] \\+ \dun k\sum_{\ell=1}^{M}\lVert \tbsigl\rVert_{1}^2.
		\end{split}
		\end{equation}
		We plug \eqref{eq:nonlinear term Q} into  \eqref{eq:auxiliary pbm for v error estimate first3}. We fix $\dun$ so that $0<\dun<\nu$. Since $\omega\in \Omega_{\kappa_1}\cap\Omega_{\kappa_2}\cap\Omega_{\kappa_3}$, we can find a constant $C=C(\dun,L,T)>0$ such that
		{
			\begin{align*}
			&\max_{1\leq m\leq M}\left\{(\tfrac{\alpha+1}{2\alpha})\lVert \bsig^m\rVert^2 + (\tfrac{\alpha-1}{2\alpha})\lVert \tbsig^m\rVert^2\right\}
			\\
			&+(\tfrac{\alpha-1}{2\alpha})\sum_{\ell = 1}^{M-1}\lVert\tbsigl-\bsigl \rVert^2 +(\nu-\dun) \left(k\sum_{\ell = 1}^{M}\lVert \nabla\tbsigl\rVert^2\right)
			\\
			&+ \frac{1}{\varepsilon}\left(k\sum_{\ell =1}^{M}\lVert\dv\tbsigl\rVert^2\right)
			\leq C\left[k\sum_{\ell=1}^{M}\lVert \tbsigl\rVert^2\lVert \tbuel\rVert_{1}^2+  (\kappa_1\kappa_3 k^{2\eta}+\kappa_1\kappa_2 +\kappa_2^2+ k^\eta+\varepsilon)\right].
			\end{align*}
			Since we choose  $0<\dun<\nu$, we have $(\nu-\dun)>0$. In addition, since $\omega\in\Omega_{\kappa_1}$, we apply the Gronwall's Lemma we conclude that	
		}
		\begin{align*}
		\max_{1\leq m\leq M}&\left\{(\tfrac{\alpha+1}{2\alpha})\lVert \bsig^m\rVert^2 + (\tfrac{\alpha-1}{2\alpha})\lVert \tbsig^m\rVert^2\right\}
		\\&+k\sum_{\ell=1}^{M}\lVert \nabla\tbsigl\rVert^2
		\leq
		C(\dun,L,T)(\kappa_1\kappa_3 k^{2\eta}+\kappa_1\kappa_2 +\kappa_2^2+ k^\eta+\varepsilon)\exp(\kappa_1).
		\end{align*}
		Remember that $\bP_\HH$ is stable in $\WW^{1,2}$, thus $\lVert \nabla\bsigl\rVert\leq C\lVert \nabla\tbsigl\rVert$.

	\end{proof}
	
	\medskip
	
	\begin{lem}\label{lem:estimation of rho} Under the same assumption as in Lemma~\ref{lem:auxiliary pbm for v error estimate}, there exists a constant $C = C(L,T,\nu)>0$ such that on $\Omega_{\kappa_1} \cap\Omega_{\kappa_2} \cap\Omega_{\kappa_3} $ the error iterates $\{\vrol:1\leq \ell\leq M\}$ of the pressure term in \cref{algo:auxilary pbm for v} satisfies
		\begin{equation}
		\label{eq:auxiliary pbm for v error estimate 00}
		k\sum_{\ell= 1}^M\lVert \vrol\rVert^2
		\leq  C(\kappa_1\kappa_3 k^{2\eta}+\kappa_1\kappa_2 +\kappa_2^2+ k^\eta+\varepsilon)\exp(\kappa_1).
		\end{equation}
		
	\end{lem}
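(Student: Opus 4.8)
The plan is to follow the template of Lemma~\ref{lem:varpi error estimate}, now with the nonlinear contribution added. First I would substitute the projection equation \eqref{eq:projection error on v} into the reduced error relation \eqref{eq:auxilary pbm for v error estimate 3} and use the pseudo-compressibility identity \eqref{eq:auxilary pbm for z error estimate 2}, exactly as \eqref{eq:auxiliary pbm for z error estimate 7} was obtained from \eqref{eq:auxilary pbm for z error estimate 3}, to arrive at the pressure error equation
\begin{equation*}
(\bsigl-\bsigll,\bfi)+\nu k(\nabla\tbsigl,\nabla\bfi)+k(\nabla\vrol,\bfi)=Q_\ell(\bfi)+R_\ell^\bv(\bfi)+\int_{t_{\ell-1}}^{t_\ell}(\nabla(\rho(t_\ell)-\rho(s)),\bfi)\,ds.
\end{equation*}
Isolating $k(\nabla\vrol,\bfi)$, dividing by $\lVert\bfi\rVert_1$, taking the supremum over $\bfi\in\WW^{1,2}$ and applying the LBB inequality \eqref{eq:LBB condition} then gives, for every $\ell$,
\begin{equation*}
k^2\lVert\vrol\rVert^2\leq C\Big(\lVert\bsigl-\bsigll\rVert_{-1}^2+(\nu k)^2\lVert\nabla\tbsigl\rVert^2+\lVert Q_\ell\rVert_{-1}^2+\lVert R_\ell^\bv\rVert_{-1}^2+k\!\int_{t_{\ell-1}}^{t_\ell}\!\lVert\rho(t_\ell)-\rho(s)\rVert^2\,ds\Big).
\end{equation*}

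Next I would estimate $\lVert\bsigl-\bsigll\rVert_{-1}$. Since $\bv(t_\ell),\bv(t_{\ell-1}),\bvel,\bvell$ are all divergence free, $\bsigl-\bsigll\in\cD(\bA^{-1})$ and I may test the displayed error equation with $\bfi=\bA^{-1}(\bsigl-\bsigll)$; the two terms $k(\nabla\vrol,\bfi)$ and $\int(\nabla(\rho(t_\ell)-\rho(s)),\bfi)\,ds$ vanish by the orthogonality between $\HH$ and gradients. Using Proposition~\ref{prop:equivalence of norms} on the left and the Young inequality on $\nu k(\nabla\tbsigl,\nabla\bA^{-1}(\bsigl-\bsigll))$, $Q_\ell(\bA^{-1}(\bsigl-\bsigll))$ and $R_\ell^\bv(\bA^{-1}(\bsigl-\bsigll))$ on the right (with $\lVert\bA^{-1}(\bsigl-\bsigll)\rVert_1\leq c_1\lVert\bsigl-\bsigll\rVert_{-1}$), and absorbing, I obtain $\lVert\bsigl-\bsigll\rVert_{-1}^2\leq C\big(k^2\lVert\nabla\tbsigl\rVert^2+\lVert Q_\ell\rVert_{-1}^2+\lVert R_\ell^\bv\rVert_{-1}^2\big)$, exactly as the term $\II$ was treated in Lemma~\ref{lem:varpi error estimate}.

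The heart of the proof, and the step I expect to be the main obstacle, is controlling $\lVert Q_\ell\rVert_{-1}$ with the correct power of $k$. Using $\bu(t_\ell)-\tbuel=\tel+\tbsigl$ I would write $\bu(s)-\tbuel=(\bu(s)-\bu(t_\ell))+\tel+\tbsigl$ and $\tb(\bu(s),\bu(s),\bfi)-\tb(\tbuel,\tbuel,\bfi)=\tb(\bu(s)-\tbuel,\bu(s),\bfi)+\tb(\tbuel,\bu(s)-\tbuel,\bfi)$, splitting $Q_\ell(\bfi)$ into six trilinear terms that mirror the $NLT_j$ decomposition in the proof of Lemma~\ref{lem:auxiliary pbm for v error estimate}, but with $\bfi$ generic and everything measured in $\WW^{-1,2}$. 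For the terms carrying the time increment $\bu(s)-\bu(t_\ell)$ I would keep it in an $\LL^4$ slot via \eqref{eq:estimate 1 of tb}, moving $\bfi$ into the other $\LL^4$ slot by the skew-symmetry \eqref{eq:skew-symmetry} whenever $\tbuel$ sits in the first slot; the remaining terms are bounded by \eqref{eq:estimate 2 of tb}. This yields
\begin{equation*}
\lVert Q_\ell\rVert_{-1}\leq C\!\int_{t_{\ell-1}}^{t_\ell}\!\Big[\lVert\bu(s)-\bu(t_\ell)\rVert_{\LL^4}\big(\lVert\bu(s)\rVert_1+\lVert\tbuel\rVert_1\big)+\big(\lVert\tel\rVert_1+\lVert\tbsigl\rVert_1\big)\big(\lVert\bu(s)\rVert_1+\lVert\tbuel\rVert_1\big)\Big]ds.
\end{equation*}
Squaring, applying the Cauchy--Schwarz inequality in time and the H\"older inequality in $\ell$, and using the definitions of $\Omega_{\kappa_1},\Omega_{\kappa_2},\Omega_{\kappa_3}$ together with the bound $k\sum_{\ell}\lVert\nabla\tbsigl\rVert^2\leq C(\kappa_1\kappa_3 k^{2\eta}+\kappa_1\kappa_2+\kappa_2^2+k^\eta+\varepsilon)\exp(\kappa_1)$ already established in Lemma~\ref{lem:auxiliary pbm for v error estimate} (to control the $\lVert\tbsigl\rVert_1$ factors), I obtain $\tfrac1k\sum_{\ell=1}^M\lVert Q_\ell\rVert_{-1}^2\leq C(\kappa_1\kappa_3 k^{2\eta}+\kappa_1\kappa_2+\kappa_2^2+k^\eta+\varepsilon)\exp(\kappa_1)$. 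The delicate point is to track the powers of $k$ so that this step consumes only quantities that are already bounded, and no Gronwall argument is required.

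Finally I would assemble the remaining pieces. The term $\lVert R_\ell^\bv\rVert_{-1}$ is handled directly from its definition by the Cauchy--Schwarz inequality in time, $\lVert R_\ell^\bv\rVert_{-1}^2\leq C\nu^2 k\int_{t_{\ell-1}}^{t_\ell}\lVert\nabla(\bv(t_\ell)-\bv(s))\rVert^2\,ds$, and $\sum_\ell\int_{t_{\ell-1}}^{t_\ell}\lVert\nabla(\bv(t_\ell)-\bv(s))\rVert^2\,ds$ is bounded exactly as in the proof of Lemma~\ref{lem:auxiliary pbm for v error estimate}, which is the source of the $k^\eta$ term. For the pressure increment I would use that $\bv+\bz=\bu$ with $\dv\bu=0$ forces $\rho=\pre$, write $\nabla(\rho(t_\ell)-\rho(s))=-\bP_\HH^{\perp}\big(\bB(\bu(t_\ell),\bu(t_\ell)-\bu(s))+\bB(\bu(t_\ell)-\bu(s),\bu(s))\big)$, and, arguing as in the proof of Lemma~\ref{lem:holder continuity of u}$\,(iii)$ and of Proposition~\ref{prop:bound for the pressure}, bound it in $\WW^{-1,2}$ by $C\big(\lVert\bu(t_\ell)\rVert_{\LL^4}+\lVert\bu(s)\rVert_{\LL^4}\big)\lVert\bu(t_\ell)-\bu(s)\rVert_{\LL^4}$, which on $\Omega_{\kappa_1}\cap\Omega_{\kappa_3}$ is $\leq C\kappa_1\kappa_3\lvert t_\ell-s\rvert^{2\eta}$, so that $\sum_\ell\int_{t_{\ell-1}}^{t_\ell}\lVert\rho(t_\ell)-\rho(s)\rVert^2\,ds\leq C\kappa_1\kappa_3 k^{2\eta}$. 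Summing the pointwise bound over $\ell$, dividing by $k$, and substituting the estimate for $\lVert\bsigl-\bsigll\rVert_{-1}$, the bounds for $Q_\ell$, $R_\ell^\bv$ and the pressure increment, and Lemma~\ref{lem:auxiliary pbm for v error estimate} for $k\sum_\ell\lVert\nabla\tbsigl\rVert^2$, yields \eqref{eq:auxiliary pbm for v error estimate 00}.
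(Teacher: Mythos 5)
Your proposal is correct and follows essentially the same route as the paper: isolate $k(\nabla\vrol,\bfi)$ in the combined error equation, apply the LBB inequality to split the bound into the nonlinear term, the $R_\ell^\bv$ term, the pressure increment, $\lVert\bsigl-\bsigll\rVert_{-1}$ (estimated by testing with $\bA^{-1}(\bsigl-\bsigll)$ and Proposition~\ref{prop:equivalence of norms}), and $k\lVert\nabla\tbsigl\rVert$, with the $NLT_j$-type decomposition of $Q_\ell$ and Lemma~\ref{lem:auxiliary pbm for v error estimate} supplying the final bounds on $\Omega_{\kappa_1}\cap\Omega_{\kappa_2}\cap\Omega_{\kappa_3}$. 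The only noteworthy (and harmless, arguably cleaner) deviation is that you bound the pressure-increment term pathwise through the representation $\nabla(\rho(t_\ell)-\rho(s))=-\bP_\HH^{\perp}(\cdots)$ and the sets $\Omega_{\kappa_1},\Omega_{\kappa_3}$, yielding a $\kappa_1\kappa_3 k^{2\eta}$ contribution, whereas the paper invokes the moment estimate of Lemma~\ref{lem:holder continuity of u}\,(iii) directly; both land inside the stated right-hand side.
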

	\begin{proof}
		We add \eqref{eq:projection error on v} and \eqref{eq:auxilary pbm for v error estimate 3} and get
		\begin{equation}
		\label{eq:estimation of rho1}
		\begin{split}
		k(\nabla\vrol,\bfi) =Q_\ell(\bfi)&+R_\ell^\bv(\bfi)+\int_{t_{\ell-1}}^{t_\ell} (\nabla(\rho(t_\ell)-\rho(s)),\bfi)ds \\&-(\bsigl-\bsigll,\bfi) - k(\nabla\tbsigl, \nabla\bfi).
		\end{split}
		\end{equation}
		Using the inequality \eqref{eq:LBB condition}, we derive that
		\begin{align*}
		k^2\sum_{\ell=0}^{M}\lVert \vrol\rVert^2&\leq C\sum_{\ell=0}^{M} \sup_{\bfi\in\WW^{1,2}} \frac{1}{\lVert \bfi\rVert_1^2}[Q_\ell(\bfi)+R_\ell(\bfi)+\int_{t_{\ell-1}}^{t_\ell} (\nabla(\rho(t_\ell)-\rho(s)),\bfi)ds
		\\
		&\qquad\qquad\qquad\qquad\qquad\qquad\qquad -(\bsig^\ell-\bsig^{\ell-1},\bfi) - k(\nabla\bsig^\ell, \nabla\bfi)]^2.
		\end{align*}
		For simplicity let us introduce the following abbreviation
		\begin{align*}
		\tI &+ \tII + \tIII + \tIV + \tV
		\\
		&:= \frac{1}{\lVert \bfi\rVert_1}Q_\ell(\bfi)+R_\ell(\bfi)+\int_{t_{\ell-1}}^{t_\ell} (\nabla(\rho(t_\ell)-\rho(s)),\bfi)ds -(\bsig^\ell-\bsig^{\ell-1},\bfi) - k(\nabla\bsig^\ell, \nabla\bfi).
		\end{align*}
		In the following we estimate each term of the right side.
		
		$\bullet$ \textbf{Term} $\tI$: Here, we get
		\begin{align*}
		\tI&\leq C \int_{t_{\ell-1}}^{t_\ell}\sup_{\bfi\in\WW^{1,2}}\frac{1}{\lVert \bfi\rVert_1}(NLT_1(\bfi)+NLT_2(\bfi)+NLT_3(\bfi)+NLT_4(\bfi))ds,\\
		&\leq  C \int_{t_{\ell-1}}^{t_\ell} (\tNLT_1+\tNLT_2+\tNLT_3+\tNLT_4) ds,
		\end{align*}
		where with \eqref{eq:estimate 1 of tb} and \eqref{eq:estimate 2 of tb} we arrive at
		\begin{align*}
		\tNLT_1&\leq C(L)\sup_{\bfi\in\WW^{1,2}} \frac{1}{\lVert \bfi\rVert_1}
		\lVert\bu(s)\rVert_{1}\lVert\bfi\rVert_{1}\lVert \bu(s)-\bu(t_\ell)\rVert_{\LL^4}=\lVert\bu(s)\rVert_{1}\lVert \bu(s)-\bu(t_\ell)\rVert_{\LL^4},\\
		\tNLT_2&\leq \sup_{\bfi\in\WW^{1,2}} \frac{1}{\lVert \bfi\rVert_1} \lVert\bu(s)-\bu(t_\ell) \rVert_{\LL^4}\lVert \bu(t_\ell)\rVert_{1}\lVert \bfi\rVert_{1}=\lVert\bu(s)-\bu(t_\ell) \rVert_{\LL^4}\lVert \bu(t_\ell)\rVert_{1},\\
		\tNLT_3&\leq C(L)\sup_{\bfi\in\WW^{1,2}} \frac{1}{\lVert \bfi\rVert_1}\lVert \bu(t_\ell)\rVert_{1}\lVert \tel\rVert_{1}\lVert\bfi \rVert_{1}=\lVert \bu(t_\ell)\rVert_{1}\lVert \tel\rVert_{1},\\
		\tNLT_4&\leq C(L)\sup_{\bfi\in\WW^{1,2}} \frac{1}{\lVert \bfi\rVert_1}\{\lVert \tel\rVert_{1}\lVert\bu(t_\ell) \rVert_{1}\lVert\bfi \rVert_{1}+\lVert \tel\rVert\lVert\tel \rVert_{1}\lVert\bfi\rVert_{1}\},
		\\
		&=C(L)\{\lVert \tel\rVert_{1}\lVert\bu(t_\ell) \rVert_{1}+\lVert \tel\rVert\lVert\tel \rVert_{1}\}.
		\end{align*}
		Integrating gives
		\begin{align*}
		\int_{t_{\ell-1}}^{t_\ell}\tNLT_1 ds
		&\leq C(L)\sup_{t_{\ell-1}\leq s\leq t_\ell}\lVert\bu(s)\rVert_{1}\int_{t_{\ell-1}}^{t_\ell}\lVert \bu(s)-\bu(t_\ell)\rVert_{\LL^4}ds,
		\\
		\int_{t_{\ell-1}}^{t_\ell}\tNLT_2 ds&\leq\lVert \bu(t_\ell)\rVert_{1}\int_{t_{\ell-1}}^{t_\ell}\lVert\bu(s)-\bu(t_\ell) \rVert_{\LL^4}ds,
		\\
		\int_{t_{\ell-1}}^{t_\ell}\tNLT_3 ds&\leq k\lVert \bu(t_\ell)\rVert_{1}\lVert \tel\rVert_{1},
		\\
		\int_{t_{\ell-1}}^{t_\ell}\tNLT_4 ds&\leq C(L)k\{\lVert \tel\rVert_{1}\lVert\bu(t_\ell) \rVert_{1}+\lVert \tel\rVert\lVert\tel \rVert_{1}\}.
		\end{align*}
		From the estimates of $\int_{t_{\ell-1}}^{t_\ell}\tNLT_ids,$ for $\,i=1,\ldots,4,$ we obtain
		\begin{align*}
		\tI^2\leq k C(L)&\sup_{t_{\ell-1}\leq s\leq t_\ell}\lVert\bu(s)\rVert_{1}^2\int_{t_{\ell-1}}^{t_\ell}\lVert \bu(s)-\bu(t_\ell)\rVert_{\LL^4}^2ds+k\lVert \bu(t_\ell)\rVert_{1}^2\int_{t_{\ell-1}}^{t_\ell}\lVert\bu(s)-\bu(t_\ell) \rVert_{\LL^4}^2ds
		\\[5pt]
		&+k^2\lVert \bu(t_\ell)\rVert_{1}^2\lVert \tel\rVert_{1}^2+C(L)k^2\{\lVert \tel\rVert_{1}^2\lVert\bu(t_\ell) \rVert_{1}^2+\lVert \tel\rVert^2\lVert\tel \rVert_{1}^2\}.
		\end{align*}
		Summing up for $\ell = 1$ to $\ell=M$ gives
		\begin{align*}
		\sum_{\ell=1}^{M}\tI^2&\leq
		C(L,T)k(\kappa_1\kappa_2+\kappa_1\kappa_3+\kappa_2^2).
		\end{align*}

		$\bullet$ \textbf{Term} $\tII$: Here, we have
		\begin{align*}
		\tII^2
		&\leq k\int_{t_{\ell-1}}^{t_\ell}\sup_{\bfi\in\WW^{1,2}}\nu^2\frac{\lVert\nabla (\bv(t_\ell)- \bv(s)\rVert^2\lVert\nabla\bfi\rVert^2}{\lVert \bfi\rVert_1^2}ds\leq C k\int_{t_{\ell-1}}^{t_\ell}\nu^2\lVert\nabla (\bv(t_\ell)- \bv(s)\rVert^2 ds.
		\intertext{Then, summing up and using Lemma~\ref{lem:holder continuity of u} gives}
		\sum_{\ell=1}^M\tII^2&\leq C(\nu,T)\leq C(\nu) k^{\eta + 1}.
		\end{align*}
		
		$\bullet$ \textbf{Term} $\tIII$: Here, we have
		\begin{align*}
		\tIII^2 &\leq \sup_{\bfi\in\WW^{1,2}} \frac{1}{\lVert \bfi\rVert_1^2}k\int_{t_{\ell-1}}^{t_\ell} \lVert\rho(t_\ell)-\rho(s)\rVert^2\lVert\bfi\rVert_1^2ds=k\int_{t_{\ell-1}}^{t_\ell}\lVert\rho(t_\ell)-\rho(s)\rVert^2 ds.
		\intertext{Again summing up and using Lemma~\ref{lem:holder continuity of u} gives}
		\sum_{\ell=1}^M\tIII^2 &\leq k\sum_{\ell=1}^M\int_{t_{\ell-1}}^{t_\ell} \lVert\rho(t_\ell)-\rho(s)\rVert^2 ds\leq C_{T,4}k\sum_{\ell=1}^Mk^{2\eta+1} = C_{T,4}k^{2\eta+1}.
		\end{align*}
		
		$\bullet$ \textbf{Term} $\tIV$: Here, we proceed in two steps. First, we estimate $\tIV$ with a term under a weak  norm. Then, we use the Proposition~\ref{prop:equivalence of norms} to bound this later with terms under $H^1$ or $L^2$-norm. Thereby, we have
		\begin{align*}
		\tIV=\sup_{\bfi\in\WW^{1,2}} \frac{1}{\lVert \bfi\rVert_1}(\bsig^\ell-\bsigll,\bfi)\leq \sup_{\bfi\in\WW^{1,2}} \frac{1}{\lVert \bfi\rVert_1}\lVert \bsigl-\bsigll\rVert_{-1}\lVert \bfi\rVert_1\leq\lVert \bsigl-\bsigll\rVert_{-1}.
		\end{align*}
		Next, since $\bsigl -\bsigll\in \cD(\bA^{-1})$, we can take $\bfi=\bA^{-1}(\bsigl -\bsigll)$ in \eqref{eq:estimation of rho1}, use Proposition~\ref{prop:equivalence of norms}, and arrive at the following estimates:
		\\[1pt]
		\begin{itemize}
			\item[$i)$] $\lVert \bsigl-\bsigll\rVert_{{-1}}^2 \leq C(\bsigl-\bsigll,\bA^{-1}(\bsigl -\bsigll)),$
			\\[1pt]
			\item[$ii)$] $k(\nabla\tbsigl, \nabla \bA^{-1}(\bsigl -\bsigll))\leq \dun\lVert \bsigl-\bsigll\rVert_{{-1}}^2 + C\dun k^2\lVert \nabla\tbsigl\rVert^2,$
			\\[1pt]
			\item[$iii)$] $k(\nabla\vrol,\bA^{-1}(\bsigl -\bsigll)) = \int_{t_{\ell-1}}^{t_\ell}(\nabla(\rho(t_\ell)-\rho(s)), \bA^{-1}(\bsig^\ell -\bsig^{\ell-1}))ds= 0,$
			\\[1pt]
			\item[$iv)$] $R_\ell^\bv(\bA^{-1}(\bsigl -\bsigll))\leq C(\nu,\dun) k^{\eta +2} + {\dun}\lVert \bsigl -\bsigll\rVert_{-1}^2.$
			\\[1pt]
		\end{itemize}
		We split the term $Q_\ell(\bA^{-1}(\bsigl -\bsigll))$ as follows
		\begin{align*}
		Q_\ell(\bA^{-1}(\bsigl -\bsigll))
		\leq \int_{t_{\ell-1}}^{t_\ell} & NLT_1(\bA^{-1}(\bsigl -\bsigll))+NLT_2(\bA^{-1}(\bsigl -\bsigll))\\&+NLT_3(\bA^{-1}(\bsigl -\bsigll))+NLT_4(\bA^{-1}(\bsigl -\bsigll))ds,
		\end{align*}
		where each of terms $NLT_j(\bA^{-1}(\bsigl -\bsigll))$ for $j=1,2,3,4$, are estimated as follows:
		\begin{align*}
		NLT_1(\bA^{-1}(\bsigl -\bsigll))
		&\leq C_\dun(L)k\lVert\bu(s)\rVert_{1}^2\lVert\bu(s)-\bu(t_\ell)\rVert_{\LL^4}^2+\frac{\dun}{k}\lVert\bA^{-1}(\bsigl -\bsigll) \rVert_{1}^2,
		\\[7pt]
		NLT_2(\bA^{-1}(\bsigl -\bsigll))
		&\leq C_\dun k \lVert\bu(s)-\bu(t_\ell) \rVert_{\LL^4}^2\lVert \bu(t_\ell)\rVert_{1}^2 + \frac{\dun}{k} \lVert \bA^{-1}(\bsigl -\bsigll)\rVert_{1}^2,
		\\[7pt]
		NLT_3(\bA^{-1}(\bsigl -\bsigll))
		&\leq C(\dun,L)k\lVert \bu(t_\ell)\rVert_{1}^2\lVert \tel\rVert_{1}^2+\frac{\dun}{k}\lVert\bA^{-1}(\bsigl -\bsigll) \rVert_{1}^2,
		\\[7pt]
		NLT_4(\bA^{-1}(\bsigl -\bsigll))
		&\leq C(\dun,L)k\left\{\lVert \tel\rVert_{1}^2\lVert\bu(t_\ell) \rVert_{1}^2+\lVert \tel\rVert^2\lVert\tel \rVert_{1}^2+\lVert \tbsigl\rVert^2\lVert \tbuel\rVert_{1}^2\right\}
		\\
		&\hspace{150pt} + \frac{2\dun}{k} \lVert \bA^{-1}(\bsigl -\bsigll)\rVert_{1}^2.
		\end{align*}
		All together, the estimates of $NLT_i(\bA^{-1}(\bsigl -\bsigll))$, for $i= 1,\ldots,4$, lead to
		\begin{align*}
		Q_\ell(\bA^{-1}(\bsigl -\bsigll))
		&\leq C{(\dun,L)}k\int_{t_{\ell-1}}^{t_\ell}\lVert\bu(s)-\bu(t_\ell)\rVert_{\LL^4}^2\left[\lVert\bu(s)\rVert_{1}^2+  \lVert \bu(t_\ell)\rVert_{1}^2\right]ds
		\\[2pt]
		&+C(\dun,L)k^2\left\{\lVert \tel\rVert_{1}^2\lVert\bu(t_\ell) \rVert_{1}^2+\lVert \tel\rVert^2\lVert\tel \rVert_{1}^2+\lVert \tbsigl\rVert^2\lVert \tbuel\rVert_{1}^2\right\}
		\\[5pt]
		&+C(\dun,L)k^2\lVert \bu(t_\ell)\rVert_{1}^2\lVert \tel\rVert_{1}^2+{4\dun}\lVert\bA^{-1}(\bsig^\ell -\bsig^{\ell-1}) \rVert_{1}^2.
		\end{align*}
		In addition on $\Omega_{\kappa_3}$ we have
		\begin{align*}
		Q_\ell(\bA^{-1}(\bsigl -\bsigll))
		\leq &C(\dun,L)(\sup_{t_{\ell-1}\leq s\leq {t_\ell}}\lVert\bu(s)\rVert_{1}^2 +C(\dun)\lVert \bu(t_\ell)\rVert_{1}^2)\kappa_3k^{2\eta + 2}
		\\[3pt]
		&+C(\dun,L)k^2\left\{\lVert \tel\rVert_{1}^2\lVert\bu(t_\ell) \rVert_{1}^2+\lVert \tel\rVert^2\lVert\tel \rVert_{1}^2+\lVert \tbsigl\rVert^2\lVert \tbuel\rVert_{1}^2\right\}
		\\[5pt]
		&+C(\dun,L)k^2\lVert \bu(t_\ell)\rVert_{1}^2\lVert \tel\rVert_{1}^2
		+{4\dun}\lVert\bsigl -\bsigll \rVert_{-1}^2.
		\end{align*}
		Now summing for $\ell=1$ to $\ell=M$, we have
		\begin{align*}
		\sum_{\ell=1}^{M}Q_\ell(\bA^{-1}&(\bsigl -\bsigll))
		\leq C(\dun,L)(\sup_{0\leq s\leq T}\lVert\bu(s)\rVert_{1}^2 +\max_{1\leq\ell\leq M} \lVert \bu(t_\ell)\rVert_{1}^2)\kappa_3 k^{2\eta + 2}
		\\&+C(\dun,L)k\max_{1\leq\ell\leq M}\lVert \bu(t_\ell)\rVert_{1}^2\left(k\sum_{\ell=1}^{M}\lVert \tel\rVert_{1}^2\right)
		+k\max_{1\leq\ell\leq M}\lVert \tel\rVert^2\left( k\sum_{\ell=1}^{M}\lVert\tel \rVert_{1}^2\right)\\
		&+k\max_{1\leq\ell\leq M}\lVert \tbsigl\rVert^2\left(k\sum_{\ell=1}^{M}\lVert \tbuel\rVert_{1}^2\right)
		+{4\dun}\sum_{\ell=1}^{M}\lVert\bsigl -\bsigll \rVert_{-1}^2.
		\end{align*}
		Since we have due to the assumptions $\omega \in \Omega_{\kappa_1}\cap\Omega_{\kappa_2}\cap\Omega_{\kappa_3}$
		we obtain using \eqref{eq:estimate of sigma ell 1} 
		\begin{align*}
		\sum_{\ell=1}^{M}Q_\ell(\bA^{-1}(\bsigl -\bsigll))
		&\leq C(\dun,L)k\left(\kappa_1\kappa_3 k^{2\eta + 1}+\kappa_1\kappa_2+\kappa_2^2\right)+{4\dun}\sum_{\ell=1}^{M}\lVert\bsigl -\bsigll \rVert_{-1}^2\\
		&+	C(\dun,L,T)k(\kappa_1\kappa_3 k^{2\eta}+\kappa_1\kappa_2 +\kappa_2^2+ k^\eta+\varepsilon)\exp(\kappa_1).
		\end{align*}
		All together we obtain,
		\begin{align*}
		\sum_{\ell=1}^{M}\lVert \bsigl-\bsigll\rVert_{-1}^2&\leq 6\dun\sum_{\ell=1}^{M}\lVert \bsigl-\bsigll\rVert_{{-1}}^2 + C(\dun )k^2\sum_{\ell=1}^{M}\lVert \nabla\tbsigl\rVert^2+C(\nu,\dun) k^{\eta +1}
		\\
		&+C(\dun,L,T)k(\kappa_1\kappa_3 k^{2\eta}+\kappa_1\kappa_2 +\kappa_2^2+ k^\eta+\varepsilon)\exp(\kappa_1).
		\end{align*}
		The terms with $\lVert \bsig^\ell-\bsig^{\ell-1}\rVert_{-1}^2$ are absorbed by the left hand side. Thanks to \eqref{eq:estimate of sigma ell 1},
		\begin{align*}
		(1-6\dun)\sum_{\ell=1}^{M}\lVert \bsigl-\bsigll\rVert_{-1}^2&\leq C(\dun,L,T)k(\kappa_1\kappa_3 k^{2\eta}+\kappa_1\kappa_2 +\kappa_2^2+ k^\eta+\varepsilon)\exp(\kappa_1).
		\end{align*}
		We can choose $\dun$ so that $(1-6\dun)>0$. Note that $1\leq\exp(x)$ for all $x\in\RR$. Therefore,
		\begin{align*}
		\sum_{\ell=1}^{M}{\tIV}^2&\leq C(L,T)k(\kappa_1\kappa_3 k^{2\eta}+\kappa_1\kappa_2 +\kappa_2^2+ k^\eta+\varepsilon)\exp(\kappa_1).
		\end{align*}
		$\bullet$ \textbf{Term} $\tV$: Here, we have
		\begin{align*}
		\tV	&= \sup_{\bfi_\ell\in\WW^{1,2}} \frac{1}{\lVert \bfi\rVert_1}k(\nabla\tbsigl, \nabla\bfi)
		\leq \sup_{\bfi\in\WW^{1,2}} \frac{1}{\lVert \bfi\rVert_1}k\lVert \nabla\tbsigl\rVert\lVert\nabla\bfi \rVert=Ck\lVert \nabla\tbsigl\rVert.
		\intertext{Summing up and using \eqref{eq:estimate of sigma ell 1} gives}
		\sum_{\ell=1}^{M}\tV^2&\leq Ck^2\sum_{\ell=1}^{M}\lVert \nabla\tbsigl\rVert^2\leq C(\dun,L,T)k(\kappa_1\kappa_3 k^{2\eta}+\kappa_1\kappa_2 +\kappa_2^2+ k^\eta+\varepsilon)\exp(\kappa_1).
		\end{align*}
		Collecting $\tI,\II,\tIII,\tIV,$ and $\tV$, we obtain
		\begin{align*}
		k^2\sum_{\ell=0}^{M}\lVert \vro^\ell\rVert^2&\leq \sum_{\ell=0}^{M}\{\tI + \tII + \tIII + \tIV + \tV\}^2,\\
		&\leq C(L,T)k(\kappa_1\kappa_2+\kappa_1\kappa_3+\kappa_2^2)+C(\nu,T) k^{\eta + 1} 
		+ C_{T,4}k^{2\eta+1}\\
		&\qquad+C(L,T)k(\kappa_1\kappa_3 k^{2\eta}+\kappa_1\kappa_2 +\kappa_2^2+ k^\eta+\varepsilon)\exp(\kappa_1)\\
		&\qquad+C(L,T)k(\kappa_1\kappa_3 k^{2\eta}+\kappa_1\kappa_2 +\kappa_2^2+ k^\eta+\varepsilon)\exp(\kappa_1).
		\end{align*}
		Because $1<\exp(x)$ for all $x\in\RR$ and with a limiting order term ($k^\eta$), we have
		\begin{align*}
		k\sum_{\ell=0}^{M}\lVert \vrol\rVert^2
		\leq C(L,T,\nu)(\kappa_1\kappa_3 k^{2\eta}+\kappa_1\kappa_2 +\kappa_2^2+ k^\eta+\varepsilon)\exp(\kappa_1).
		\end{align*}
		%
		
	\end{proof}
	\section{Main results}
	\label{sec:main results}Let us define the errors $\beel= \bu(t_\ell)-\buel$ and $\qrel = \pre(t_\ell)-\preel$. Here in the final section, we use the estimates of the iterates $\{\el,\vpil\}_\ell$ and $\{\bsigl,\vrol\}_\ell$ to derive an estimate for $\{\beel,\qrel \}_\ell$, show convergence in probability of \cref{algo:penalty method}, and deduce from that strong convergence.
	
	We set
	\begin{align}
	\label{eq:EM}
	\mE^M &:= \max_{1\leq m\leq M}\lVert \bee^m\rVert^2 + \nu k\sum_{\ell=1}^M \lVert \nabla\bee^\ell\rVert^2+k\sum_{\ell=1}^M \lVert \qre^\ell\rVert^2,
	\\
	\label{eq:tEM}
	\tmE^M &:= \max_{1\leq m\leq M}\lVert \bee^m\rVert^2 + \left(\nu k\sum_{\ell=1}^M \lVert \nabla\bee^\ell\rVert^2\right)^{1/2}+\left(k\sum_{\ell=1}^M \lVert \qre^\ell\rVert^2\right)^{1/2},
	\\
	\label{eq:E1M}
	\mE_1^M &:= \max_{1\leq m\leq M}\lVert \e^m\rVert^2 + \nu k\sum_{\ell=1}^M \lVert \nabla\el\rVert^2+k\sum_{\ell=1}^M \lVert \vpil\rVert^2,
	\\
	\label{eq:E2M}
	\mE_2^M &:= \max_{1\leq m\leq M}\lVert \bsig^m\rVert^2 + \nu k\sum_{\ell=1}^M \lVert \nabla\bsigl\rVert^2+k\sum_{\ell=1}^M \lVert \vrol\rVert^2.
	\end{align}
	\begin{theorem}\label{thm:convergence in probability}
		Let  $\mE^M$ be defined in \eqref{eq:EM}. If $\varepsilon=\kappa^\eta$, the \cref{algo:penalty method} converges in probability with order $0<r< \eta$. In particular, we have
		\begin{equation*}
		\lim_{\tC\to\infty}\lim_{k\to 0}\PP\left[\mE^M \geq \tC  k^\mathrm{r}\right]=0.
		\end{equation*}
	\end{theorem}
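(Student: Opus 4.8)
The plan is to reduce the total error of \cref{algo:penalty method} to the two families of errors already estimated in \cref{sec:auxiliary results} --- the linear, random one $\{\el,\vpil\}_\ell$ and the nonlinear, deterministic one $\{\bsigl,\vrol\}_\ell$ --- and then to split the probability according to whether the relevant quantities sit at their typical size. The key preliminary observation is that the continuous decomposition $\bu=\bv+\bz$ of \eqref{eq:NSE1} into \eqref{eq:NSE auxiliary} and \eqref{eq:NSE auxiliary 2}, together with $\pre=\rho+\pi$ (which follows from $\dv\bu=0$, so that $\tbB(\bu,\bu)=[\bu\cdot\nabla]\bu$, and uniqueness of the de Rham pressure with vanishing average), has an \emph{exact} counterpart at the discrete level: summing the penalization and projection steps of \cref{algo:auxilary pbm for z} with those of \cref{algo:auxilary pbm for v} reproduces verbatim the steps of \cref{algo:penalty method} under the identification $\tbuel\leftrightarrow\tbvel+\tbzel$, $\tpreel\leftrightarrow\troel+\tpiel$, $\fiel\leftrightarrow\psiel+\xiel$. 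Since $\bu^{\varepsilon,0}=\buo=\bv^0+\bz^0$ and $\phi^0=0$, an induction on $\ell$ combined with the uniqueness part of \cref{prop:existence and measurability} gives $\buel=\bvel+\bzel$ and $\preel=\roel+\piel$, hence $\beel=\el+\bsigl$, $\qrel=\vpil+\vrol$, and by $(a+b)^2\le2a^2+2b^2$,
\[
\mE^M\le 2\,\mE_1^M+2\,\mE_2^M ,
\]
with $\mE_1^M,\mE_2^M$ as in \eqref{eq:E1M} and \eqref{eq:E2M}. It therefore suffices to bound $\PP[\mE_1^M\ge\tfrac{\tC}{4}k^{r}]$ and $\PP[\mE_2^M\ge\tfrac{\tC}{4}k^{r}]$ separately.

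For the linear part, \cref{lem:auxiliary pbm for z error estimate,lem:varpi error estimate} and the Poincaré inequality ($\lVert\el\rVert_1\le C\lVert\nabla\el\rVert$) give $\EE\,\mE_1^M\le C(k^\eta+\varepsilon)$, hence $\EE\,\mE_1^M\le Ck^\eta$ for $\varepsilon=k^\eta$, and Markov's inequality yields, for each fixed $\tC>0$, $\PP[\mE_1^M\ge\tfrac{\tC}{4}k^{r}]\le 4C\tC^{-1}k^{\eta-r}\to0$ as $k\to0$, since $r<\eta$.

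For the nonlinear part I would keep $\kappa_1,\kappa_3>0$ fixed but let the middle cutoff in \eqref{eq:sample subset} depend on the mesh: fix $r<r'<\eta$ and take $\kappa_2=\kappa_2(k):=k^{r'}$. On $\Omega_{\kappa_1}\cap\Omega_{\kappa_2(k)}\cap\Omega_{\kappa_3}$, \cref{lem:auxiliary pbm for v error estimate,lem:estimation of rho} give $\mE_2^M\le C\bigl(\kappa_1\kappa_3 k^{2\eta}+\kappa_1 k^{r'}+k^{2r'}+k^\eta+\varepsilon\bigr)e^{\kappa_1}$, which for $k\le1$ and $\varepsilon=k^\eta$ is $\le C(\kappa_1,\kappa_3)\,k^{r'}$ because $r'<\eta<2\eta$. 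As $r'>r$, the event $\{\mE_2^M\ge\tfrac{\tC}{4}k^{r}\}\cap\Omega_{\kappa_1}\cap\Omega_{\kappa_2(k)}\cap\Omega_{\kappa_3}$ is empty once $k<k_0(\kappa_1,\kappa_3,\tC)$, so for such $k$
\[
\PP[\mE_2^M\ge\tfrac{\tC}{4}k^{r}]\le\PP[\Omega_{\kappa_1}^c]+\PP[\Omega_{\kappa_2(k)}^c]+\PP[\Omega_{\kappa_3}^c].
\]
The three complements go by Markov's inequality: $\PP[\Omega_{\kappa_1}^c]\le C/\kappa_1$ from \eqref{eq:V-estimate of u 2} and the stability of \cref{algo:direct discretization}; $\PP[\Omega_{\kappa_2(k)}^c]\le C\,\EE\mE_1^M/\kappa_2(k)\le Ck^{\eta-r'}\to0$; and $\PP[\Omega_{\kappa_3}^c]\le C/\kappa_3$ after inserting the moment bound in item $(i)$ of \cref{lem:holder continuity of u} (with a Hölder exponent strictly between $\eta$ and $\tfrac12$) into the Kolmogorov--Chentsov criterion, which makes $\sup_{0\le s<t\le T}\lVert\bu(s)-\bu(t)\rVert_{\LL^4}^2/|t-s|^{2\eta}$ integrable.

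Combining the two estimates, for every $\kappa_1,\kappa_3>0$ and every $\tC>0$ one obtains $\limsup_{k\to0}\PP[\mE^M\ge\tC k^{r}]\le\PP[\Omega_{\kappa_1}^c]+\PP[\Omega_{\kappa_3}^c]$, and letting $\kappa_1,\kappa_3\to\infty$ makes the right-hand side vanish; hence $\lim_{k\to0}\PP[\mE^M\ge\tC k^{r}]=0$ for each $\tC$, which a fortiori gives the stated double limit. The hard part is the nonlinear step: one must tune the mesh-dependent cutoff $\kappa_2(k)$ so that the terms $\kappa_1\kappa_2+\kappa_2^2$ of \cref{lem:auxiliary pbm for v error estimate,lem:estimation of rho} stay $o(k^{r})$ while $\PP[\Omega_{\kappa_2(k)}^c]$ still tends to $0$ --- this is precisely where the restriction $r<\eta$, equivalently the choice $\varepsilon=k^\eta$ with penalty exponent $\eta<\tfrac12$, is consumed --- and one must furnish the pathwise $\LL^4$-Hölder regularity of $\bu$ needed for $\PP[\Omega_{\kappa_3}^c]$.
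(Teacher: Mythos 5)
Your proposal is correct and follows essentially the same route as the paper: the splitting $\beel=\el+\bsigl$, $\qrel=\vpil+\vrol$ induced by the Ornstein--Uhlenbeck decomposition, the sample subsets $\Omega_{\kappa_1},\Omega_{\kappa_2},\Omega_{\kappa_3}$, the pathwise bounds of \cref{lem:auxiliary pbm for z error estimate,lem:varpi error estimate,lem:auxiliary pbm for v error estimate,lem:estimation of rho} on their intersection, and Markov's inequality for the complements. The only differences are in execution --- you keep $\kappa_1,\kappa_3$ fixed and send them to infinity only after $k\to 0$ (the paper takes $\kappa_1=\ln k^{-\mu/2}$, $\kappa_2=k^{\mu+r}$, $\kappa_3=k^{-\eta}$ all mesh-dependent), and you supply two justifications the paper leaves implicit, namely the induction giving $\buel=\bvel+\bzel$, $\preel=\roel+\piel$ and the Kolmogorov--Chentsov argument behind $\PP\left[\Omega\setminus\Omega_{\kappa_3}\right]\leq C/\kappa_3$ --- which if anything yields the slightly stronger single-limit statement $\lim_{k\to 0}\PP\left[\mE^M\geq \tC k^r\right]=0$ for each fixed $\tC$.
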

	\begin{proof}Let $\tC,r>0$ be some arbitrary constants which will be fixed at the end of the proof. By the Chebyshev inequality
		\begin{align*}
		&\PP \left[\mE^M \geq \tC k^r\right]\leq
		\PP(\Omega \setminus\Omega_{\kappa_1})+\PP(\Omega \setminus\Omega_{\kappa_2})+\PP(\Omega \setminus\Omega_{\kappa_3})+\PP\left[\mE^M \geq \tC k^r\big|\Omega_{\kappa_1}\cap\Omega_{\kappa_2}\cap\Omega_{\kappa_3}\right]
		\\
		&\leq \frac{1}{\kappa_1}{\EE\left[\displaystyle\sup_{0\leq s\leq T}\lVert \bu(s)\rVert^2_\VV+\nu k\sum_{\ell=1}^{M}\lVert\bu^\ell \rVert_{1}^2\right]}+\frac{1}{{\kappa_2}}{\EE\left[\displaystyle\max_{1\leq \ell\leq M}\lVert \el\rVert^2+\nu k\sum_{\ell=1}^{M}\lVert\el \rVert_{1}^2+k\sum_{\ell=1}^M \lVert \vpil\rVert^2\right]}
		\\
		&\qquad+\frac{1}{{\kappa_3\lvert t-s\rvert^{2\eta}}}{ \EE\left[\lVert \bu(s)- \bu(t)\rVert^{2}_{\LL^4}\right]}+\frac{\EE\left[\mE^M \big|\Omega_{\kappa_1}\cap\Omega_{\kappa_2}\cap\Omega_{\kappa_3}\right]}{\tC k^r}.
		\end{align*}
		Observe, that we can write $\bee^\ell = \el + \bsigl$ and $\qrel = \vpil + \vrol$. Now, it follows by the definition of $\Omega_{\kappa_2}$ (see \eqref{eq:sample subset}), by Lemma~\ref{lem:auxiliary pbm for v error estimate}, and Lemma~\ref{lem:estimation of rho} 
		\begin{align*}
		\EE\left[\mE^M \big| \Omega_{\kappa_1}\cap\Omega_{\kappa_2}\cap\Omega_{\kappa_3}\right]&\leq \EE\left[\mE_1^M \big| \Omega_{\kappa_1}\cap\Omega_{\kappa_2}\cap\Omega_{\kappa_3}\right] + \EE\left[\mE_2^M \big| \Omega_{\kappa_1}\cap\Omega_{\kappa_2}\cap\Omega_{\kappa_3}\right]
		\\
		&\leq \kappa_2 + C(\kappa_1\kappa_3 k^{2\eta}+\kappa_1\kappa_2 +\kappa_2^2+ k^\eta+\varepsilon)\exp(\kappa_1),
		\end{align*}
		where $\mE_1^M$ and $\mE_2^M$ are defined by \eqref{eq:E1M} and \eqref{eq:E2M} respectively.
		%
		Moreover, by estimate \eqref{eq:V-estimate of u 2}, Lemma~\ref{lem:stability}, and Lemma~\ref{lem:auxiliary pbm for z error estimate} we obtain
		\begin{align*}
		\PP \left[\mE^M\geq \tC k^r\right]&\leq \frac{\kappa_2 + C(\kappa_1\kappa_3 k^{2\eta}+\kappa_1\kappa_2 +\kappa_2^2+ k^\eta+\varepsilon)\exp(\kappa_1)}{\tC k^r}
		+\frac{C}{\kappa_1}+\frac{C(k^\eta +\varepsilon)}{\kappa_2}+\frac{C}{\kappa_3}
		\\
		&\leq \frac{C(\kappa_2 + \kappa_3 k^{2\eta} +\kappa_2^2+ k^\eta+\varepsilon)\exp(2\kappa_1)}{\tC k^r}+\frac{C}{\kappa_1}+\frac{C(k^\eta +\varepsilon)}{\kappa_2}+\frac{C}{\kappa_3}.
		\end{align*}
		Let $\mu>0$. We fix $\varepsilon = k^\eta$, $\kappa_1=\ln k^{-\mu/2}$, $\kappa_2=k^{\mu + r}$, and $\kappa_3= k^{-\eta}$ with $k<1$. Therefore, we have
		\begin{align*}
		\PP\left[\mE^M\geq \tC k^r\right]
		&\leq\frac{C(k^{r}+k^{\eta-\mu})}{\tC k^r}-\frac{C}{\ln k^{\mu}}+C{k^{\eta-\mu - r}}+C k^{\eta}.
		\end{align*}
		Let us remind, that we fixed the constant $r$ in the beginning, such that $\eta -\mu -r>0$. Now, we are ready to go to the limit:
		\begin{align*}
		\lim_{\tC\to \infty}\lim_{k\to 0}\PP\left[\mE^M\geq \tC k^r\right]
		\leq\lim_{\tC\to \infty}\lim_{k\to 0}\left(\frac{C}{\tC}-\frac{C}{\ln k^{\mu}}+C{k^{\eta-\mu -r }} + Ck^\eta\right)=\lim_{\tC\to \infty} \frac{C}{\tC} = 0.
		\end{align*}
		This gives the assertion.
		
	\end{proof}
	
	A consequence of this theorem is strong convergence of iterates of the scheme. This will be shown by the following corollary.
	
	\begin{cor}Let  $\tmE^M$ be defined as in \eqref{eq:tEM}.
		Under the assumption of \cref{thm:convergence in probability} we have
		\begin{align*}
		\lim_{M\to \infty}\EE\left[\;\tmE^M\,\right] &=0.
		\end{align*}
		
	\end{cor}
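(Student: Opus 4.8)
The plan is to upgrade the convergence in probability of \cref{thm:convergence in probability} to convergence in the mean by an event–splitting argument combined with the Cauchy--Schwarz inequality, exploiting the fact that $\tmE^M$ was deliberately defined with square roots on the dissipation and pressure sums. First I would fix an arbitrary constant $\tC>0$ and, recalling $k=T/M$, write
\begin{equation*}
\EE\bigl[\tmE^M\bigr]=\EE\bigl[\tmE^M\,\ind_{\{\mE^M<\tC k^r\}}\bigr]+\EE\bigl[\tmE^M\,\ind_{\{\mE^M\ge \tC k^r\}}\bigr].
\end{equation*}
On the event $\{\mE^M<\tC k^r\}$ each summand of $\tmE^M$ is dominated by the corresponding summand of $\mE^M$: one has $\max_m\lVert\bee^m\rVert^2\le\tC k^r$, while $(\nu k\sum_\ell\lVert\nabla\bee^\ell\rVert^2)^{1/2}\le(\tC k^r)^{1/2}$ and likewise for the pressure term, so the first expectation is at most $\tC k^r+2(\tC k^r)^{1/2}$, which tends to $0$ as $k\to0$ for fixed $\tC$.

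For the second expectation I would apply Cauchy--Schwarz,
\begin{equation*}
\EE\bigl[\tmE^M\,\ind_{\{\mE^M\ge \tC k^r\}}\bigr]\le \bigl(\EE[(\tmE^M)^2]\bigr)^{1/2}\bigl(\PP[\mE^M\ge \tC k^r]\bigr)^{1/2},
\end{equation*}
so the key step is the uniform-in-$M$ bound $\EE[(\tmE^M)^2]\le C$. Using $(a+b+c)^2\le 3(a^2+b^2+c^2)$ and that the square roots are squared away, one gets
\begin{equation*}
(\tmE^M)^2\le 3\Bigl(\max_{1\le m\le M}\lVert\bee^m\rVert^4+\nu k\sum_{\ell=1}^M\lVert\nabla\bee^\ell\rVert^2+k\sum_{\ell=1}^M\lVert\qre^\ell\rVert^2\Bigr),
\end{equation*}
which reduces the task to an $L^1$-bound on $\max_m\lVert\bee^m\rVert^4$ together with $L^1$-bounds on the two sums. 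Writing $\bee^\ell=\bu(t_\ell)-\buel$ and $\qre^\ell=\pre(t_\ell)-\preel$ and splitting each term, these follow by combining the a priori estimates \eqref{eq:L2-estimate of u} and \eqref{eq:V-estimate of u 2} and \cref{prop:bound for the pressure} for the continuous solution with the stability bounds of \cref{lem:stability} (cases $q=1$ and $q=2$) and \cref{lem:stability for pressure} for the iterates; hence $\EE[(\tmE^M)^2]\le C$ independently of $M$.

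Collecting the two pieces yields, for every $\tC>0$,
\begin{equation*}
\EE\bigl[\tmE^M\bigr]\le \tC k^r+2(\tC k^r)^{1/2}+C\bigl(\PP[\mE^M\ge \tC k^r]\bigr)^{1/2}.
\end{equation*}
Letting $k\to0$ (equivalently $M\to\infty$) with $\tC$ fixed annihilates the first two terms, giving $\limsup_{M\to\infty}\EE[\tmE^M]\le C(\limsup_{k\to0}\PP[\mE^M\ge\tC k^r])^{1/2}$; then letting $\tC\to\infty$ and invoking \cref{thm:convergence in probability} forces the right-hand side to $0$, so $\lim_{M\to\infty}\EE[\tmE^M]=0$. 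The only real content is the uniform second-moment estimate $\EE[(\tmE^M)^2]\le C$, and I expect even that to be routine once the stability lemmas are in hand — the genuine difficulty of the paper lies upstream, in those stability and auxiliary error estimates, not in this final bootstrap.
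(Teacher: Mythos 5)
Your proposal is correct and follows essentially the same route as the paper: split on the event $\{\mE^M\ge\tC k^r\}$ (the paper phrases this via the law of total probability), control the good event directly by the definition of $\tmE^M$, apply Cauchy--Schwarz on the bad event, and close with a uniform-in-$M$ bound on $\EE[(\tmE^M)^2]$ obtained from \eqref{eq:L2-estimate of u}, the moment bounds on $\bu$, \cref{prop:bound for the pressure}, \cref{lem:stability}, and \cref{lem:stability for pressure}, before invoking \cref{thm:convergence in probability}. The only cosmetic difference is the order of limits: you keep $\tC$ fixed, let $M\to\infty$, and then send $\tC\to\infty$ (matching the theorem's iterated-limit statement exactly), whereas the paper takes the diagonal choice $\tC=k^{-r/4}$; both work.
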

	\begin{proof} Let $\tC>0$ an arbitrary constant.
		We define the sample set
		\begin{equation*}
		\Omega_{\tC,k}:=\left\{ \mE^M\geq \tC  k^\mathrm{r}\right\}.
		\end{equation*}
		%
		From the law of total probability we deduce that
		\begin{align*}
		\EE\left[\;\tmE^M\,\right] = \EE\left[\;\tmE^M\,\Big| \Omega_{\tC,k} \right]\PP(\Omega_{\tC,k}) + \EE\left[\;\tmE^M\,\Big|\Omega\setminus\Omega_{\tC,k}\right]\PP(\Omega\setminus\Omega_{\tC,k}).
		\end{align*}
		Since $\PP(\Omega\setminus\Omega_{\tC,k})\leq 1$, and by definition of $\Omega_{\tC,k}$,
		\begin{equation*}
		\EE\left[\;\tmE^M\,\right]\leq
		\EE\left[\;\tmE^M\,\Big| \Omega_{\tC,k} \right]\PP(\Omega_{\tC,k}) + \tC k^{r/2}.
		\end{equation*}
		Using the definition of conditional expectation and the Cauchy--Schwartz inequality we obtain
		\begin{align*}
		\EE\left[\;\tmE^M\,\Big| \Omega_{\tC,k} \right]\PP(\Omega_{\tC,k})
		\leq \EE\left[\;\left(\tmE^M\right)^2\,\right]\left(\PP(\Omega_{\tC,k})\right)^{1/2}.
		\end{align*}
		Remember that $\beel = \bu(t_\ell) -\buel$ and $\beel = \pre(t_\ell) -\preel$. Using now \cref{eq:L2-estimate of u}, Lemma~\ref{lem:stability}(iii), \cref{eq:V-estimate of u 1}, Proposition~\ref{prop:bound for the pressure}, and Lemma~\ref{lem:stability for pressure}, we arrive at
		\begin{align*}
		\EE&\left[\;\left(\tmE^M\right)^2\,\right]\leq  \EE\left[\max_{1\leq m\leq M}\lVert \bu(t_m)\rVert^4\right] +  \EE\left[\max_{1\leq m\leq M}\lVert \buem\rVert^4\right]+\EE\left(\nu k\sum_{\ell= 1}^{M}\lVert \nabla\bu(t_\ell)\rVert^2\right)
		\\
		&+ \EE\left(\nu k\sum_{\ell= 1}^{M}\lVert \nabla\buel\rVert^2\right) +\EE\left( k\sum_{\ell= 1}^{M}\lVert \pre(t_\ell)\rVert^2\right) + \EE\left( k\sum_{\ell= 1}^{M}\lVert \preel\rVert^2\right)\leq C(T,L,\bu^0,\nu).
		\end{align*}
		Consequently, we get
		\begin{align*}
		\EE\left[\;\tmE^M\,\right]\leq C(T,L, \bu^0,\nu)\left(\PP(\Omega_{\tC,k})\right)^{1/2}+\tC k^{r/2}.
		\end{align*}
		Now we fix $\tC = k^{-{r}/{4}}$ from the beginning and define $\tOm_{M}:=\Omega_{M^{{{r}/{4}}},M^{-1}}$. To conclude, we take the limit for $M\to \infty$ and apply \cref{thm:convergence in probability},
		\begin{align*}
		\lim_{M\to \infty}\EE\left[\;\tmE^M\,\right]\leq C(T,L,\bu^0,\nu)\left(\lim_{M\to \infty}\PP(\tOm_{M})\right)^{1/2}+ \lim_{M\to \infty}\frac{1}{M^{r/4}}=0.
		\end{align*}
		This gives the assertion.	
	\end{proof}
	
	%
	

\end{document}